\documentclass[letterpaper, 11pt]{article}
\usepackage[utf8]{inputenc}
\usepackage[T1]{fontenc}
\usepackage{lmodern}
\usepackage{graphicx,amsmath,amsfonts,latexsym,amsthm,amssymb,mathrsfs}
\usepackage[abs]{overpic}
\usepackage[usenames,dvipsnames]{xcolor}
\usepackage[linktocpage=true]{hyperref}
\usepackage{caption}
\usepackage{dsfont}
\usepackage{mathtools}
\usepackage{stmaryrd}
\usepackage{tikz-cd}
\usepackage{hyperref}
\usepackage{graphpap}
\usepackage{svg}
\setlength{\unitlength}{1mm}
\tikzset{
  symbol/.style={
    draw=none,
    every to/.append style={
      edge node={node [sloped, allow upside down, auto=false]{$#1$}}}
  }
}
\usepackage{comment}
\usepackage{enumitem}
\numberwithin{equation}{section}
\usepackage{titlesec}
\usepackage{microtype}

\usepackage[margin=1.3in]{geometry}


\usepackage[backend=biber, maxnames=4, style=alphabetic, sorting=nyt]{biblatex}
\addbibresource{bibli.bib}
\setlength\bibitemsep{0.5\baselineskip}


\hypersetup{
  colorlinks,
  citecolor=teal,
  linkcolor=purple,
  urlcolor=teal}

\definecolor{green1}{rgb}{0.,0.4,0.}
\definecolor{blue1}{rgb}{0.,0.,0.8}
\definecolor{red1}{rgb}{0.8,0.,0.}
  

\newtheorem{thmintro}{Theorem}
\newtheorem{corintro}{Corollary}
\newtheorem{question}{Question}

\newtheorem{thm}{Theorem}[section]
\newtheorem{cor}[thm]{Corollary}
\newtheorem{lem}[thm]{Lemma}
\newtheorem{prop}[thm]{Proposition}
\newtheorem{defn}[thm]{Definition}

\newtheorem{rem}[thm]{Remark}

\newtheorem*{question*}{Questions}
\newtheorem*{thm*}{Theorem}
\newtheorem*{cor*}{Corollary}
\newtheorem*{warn}{Warning}


\newcommand{\N}{\mathbb{N}}

\newcommand{\R}{\mathbb{R}}


\title{Anosov flows and Liouville pairs in dimension three}

\author{Thomas Massoni\thanks{Department of Mathematics, Princeton University, USA. Email address: \href{mailto:tmassoni@princeton.edu}{tmassoni@princeton.edu}}}

\date{\today}

\begin{document}

\maketitle


\begin{abstract}
Building upon the work of Mitsumatsu and Hozoori, we establish a complete homotopy correspondence between three-dimensional Anosov flows and certain pairs of contact forms that we call \emph{Anosov Liouville pairs}. We show a similar correspondence between projectively Anosov flows and bi-contact structures, extending the work of Mitsumatsu and Eliashberg--Thurston. As a consequence, every Anosov flow on a closed oriented three-manifold $M$ gives rise to a Liouville structure on $\R \times M$ which is well-defined up to homotopy, and which only depends on the homotopy class of the Anosov flow. Our results also provide a new perspective on the classification problem of Anosov flows in dimension three.
\end{abstract}


\maketitle

\tableofcontents


\section{Introduction}

Throughout this article, $M$ denotes a closed, oriented, smooth manifold of dimension three. We will always assume that the Anosov and projectively Anosov flows on $M$ under consideration are \textbf{oriented}, i.e., their stable and unstable foliations are oriented. This can always be achieved by passing to a suitable double cover of $M$. For simplicity, we will only consider smooth (i.e., $\mathcal{C}^\infty$) flows, as we are primarily interested in smooth contact and symplectic structures. Our main results hold for (projectively) Anosov flows generated by $\mathcal{C}^1$ vector fields with minor changes. Moreover, the structural stability of $\mathcal{C}^1$ Anosov vector fields~\cite{R75} ensures that any Anosov flow generated by a $\mathcal{C}^1$ vector field is topologically equivalent to a smooth Anosov flow, and these two flows are dynamically identical. The definitions and basic properties of Anosov and projectively Anosov flows are recalled in Section~\ref{sec:anosov}.

\medskip

The notion of Anosov flow, originally introduced by Anosov~\cite{A63, A67} as a generalization of the geodesic flow on hyperbolic manifold, plays a central role in the theory of smooth dynamical systems. The interplay between the dynamical and topological properties of Anosov flows is particularly rich and striking in dimension three. We refer to the nice survey~\cite{B17} for many relevant results and references, and to the book~\cite{FH19} for a more complete exposition. Eliashberg and Thurston~\cite{ET}, and independently Mitsumatsu~\cite{M95}, introduced the more general concept of a \emph{conformally/projectively Anosov flow} on three-manifolds, and established a correspondence between such flows and \emph{bi-contact structures}, i.e., transverse pairs of contact structures with opposite orientations. Recently, Hozoori~\cite{H22a} extended this correspondence to Anosov flows, and showed that (oriented) Anosov flows can be completely characterized in terms of bi-contact structures admitting a pair of contact forms satisfying a natural symplectic condition. More precisely, Hozoori showed the following

\begin{thm*}[\cite{H22a}, Theorem 1.1]
Let $\Phi$ be a non-singular flow on a closed oriented $3$-manifold $M$, generated by a vector field $X$. $\Phi$ is oriented Anosov if and only if there exist transverse contact structures $\xi_-$ and $\xi_+$, negative and positive, respectively, and contact forms $\alpha_-$ and $\alpha_+$ for $\xi_-$ and $\xi_+$, respectively, such that the $1$-forms $$ (1-t) \alpha_- + (1+t) \alpha_+ \qquad \textrm{and} \qquad  - (1-t) \alpha_- + (1+t) \alpha_+$$ are positively oriented Liouville forms on $[-1,1]_t \times M$.
\end{thm*}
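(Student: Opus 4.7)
The statement is an equivalence, so I would prove each direction separately. The forward direction is a refinement of the classical Mitsumatsu/Eliashberg--Thurston construction of a bi-contact structure from an Anosov flow; the reverse direction is the more technical step.

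For $(\Rightarrow)$: Assume $\Phi$ is oriented Anosov with splitting $TM=E^{s}\oplus\R X\oplus E^{u}$. After smoothing if necessary (the contact and Liouville conditions are $\mathcal{C}^{0}$-open), pick smooth generators $X^{s},X^{u}$ of $E^{s},E^{u}$ and define smooth 1-forms $\alpha^{u}$, $\alpha^{s}$ by $\alpha^{u}|_{E^{s}\oplus\R X}=0$, $\alpha^{u}(X^{u})=1$ and $\alpha^{s}|_{E^{u}\oplus\R X}=0$, $\alpha^{s}(X^{s})=1$. Using the Anosov commutators $[X,X^{s/u}]=\mp\mu^{s/u}X^{s/u}$ with positive rates $\mu^{s/u}$ together with the integrability of the weak foliations, a direct computation gives $\alpha^{u}\wedge d\alpha^{u}=\alpha^{s}\wedge d\alpha^{s}=0$ and $\alpha^{s}\wedge d\alpha^{u}=-\mu^{u}\,\mathrm{vol}$, $\alpha^{u}\wedge d\alpha^{s}=-\mu^{s}\,\mathrm{vol}$, where $\mathrm{vol}=\alpha^{s}\wedge\alpha^{0}\wedge\alpha^{u}$ is the positive volume form dual to the Anosov frame. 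Setting $\alpha_{+}:=\alpha^{u}-\alpha^{s}$ and $\alpha_{-}:=-\alpha^{u}-\alpha^{s}$, one obtains $\alpha_{\pm}\wedge d\alpha_{\pm}=\pm(\mu^{s}+\mu^{u})\,\mathrm{vol}$, so the kernels $\xi_{\pm}$ form a positively/negatively oriented bi-contact pair. The Liouville forms simplify to $\lambda=2t\,\alpha^{u}-2\,\alpha^{s}$ and $\lambda'=2\,\alpha^{u}-2t\,\alpha^{s}$; since any 4-form pulled back from the 3-manifold $M$ vanishes, a short calculation yields $(d\lambda)^{2}=8\mu^{s}\,dt\wedge\mathrm{vol}$ and $(d\lambda')^{2}=8\mu^{u}\,dt\wedge\mathrm{vol}$, both positive volume forms on $[-1,1]\times M$.

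For $(\Leftarrow)$: Given $\alpha_{\pm}$ contact forms for a bi-contact $(\xi_{-},\xi_{+})$ with $\lambda,\lambda'$ positively Liouville, let $X$ span $\xi_{-}\cap\xi_{+}$ and set $\alpha^{u}:=(\alpha_{+}-\alpha_{-})/2$, $\alpha^{s}:=-(\alpha_{+}+\alpha_{-})/2$, so that $\alpha^{u}(X)=\alpha^{s}(X)=0$. Because the Liouville positivity $(d\lambda)^{2},(d\lambda')^{2}>0$ is linear in $t$ on each fiber, it reduces to the endpoint conditions at $t=\pm 1$; unpacking and adding these yields the pointwise sign conditions $\alpha^{u}\wedge d\alpha^{s}<0$ and $\alpha^{s}\wedge d\alpha^{u}<0$, together with the domination bounds $|\alpha^{u}\wedge d\alpha^{u}|<-\alpha^{u}\wedge d\alpha^{s}$ and $|\alpha^{s}\wedge d\alpha^{s}|<-\alpha^{s}\wedge d\alpha^{u}$. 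The bi-contact hypothesis already gives projective Anosov-ness by Mitsumatsu/Eliashberg--Thurston, producing a dominated invariant splitting with weak stable/unstable plane fields $E^{ws},E^{wu}$. Using $\mathcal{L}_{X}\alpha^{u/s}=\iota_{X}d\alpha^{u/s}$ (valid since $\alpha^{u/s}(X)=0$), the off-diagonal negativities provide pointwise positive rates $\mu^{s/u}$ via $\alpha^{u}\wedge d\alpha^{s}=-\mu^{s}\,\mathrm{vol}$ and $\alpha^{s}\wedge d\alpha^{u}=-\mu^{u}\,\mathrm{vol}$; combined with the domination bounds and the dominated splitting, one extracts genuinely invariant strong stable/unstable line fields $E^{s}\subset E^{ws}$, $E^{u}\subset E^{wu}$ with these pointwise rates, and a Gr\"onwall-type integration along the flow promotes them to the uniform exponential hyperbolicity required by the Anosov condition.

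The main obstacle is the reverse direction. In the forward direction, the integrability of the weak Anosov foliations forces $\alpha^{u/s}\wedge d\alpha^{u/s}=0$ exactly, which is what makes the Liouville computation collapse so cleanly. In the reverse direction, the Liouville conditions only yield the weaker domination $|\alpha^{u/s}\wedge d\alpha^{u/s}|<-\alpha^{u/s}\wedge d\alpha^{s/u}$, and bridging the gap between the global symplectic positivity encoded by the two Liouville conditions and the pointwise, dynamical uniform hyperbolicity condition is where the real work lies. Both Liouville forms are essential: $\lambda$ alone controls $\alpha^{u}\wedge d\alpha^{s}$ and the $\alpha^{u}$-domination, while $\lambda'$ alone controls $\alpha^{s}\wedge d\alpha^{u}$ and the $\alpha^{s}$-domination; only when both are imposed simultaneously do the negativities needed for expansion and contraction in \emph{both} directions become available.
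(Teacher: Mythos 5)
Your forward direction is essentially the construction the paper itself uses (Section~\ref{sec:std}): take a defining pair with $\ker\alpha^u=E^{ws}$, $\ker\alpha^s=E^{wu}$ and $\mathcal{L}_X\alpha^{u/s}=r_{u/s}\,\alpha^{u/s}$, form $\alpha_\pm$ by constant-coefficient combinations, and verify the pointwise inequalities; your computation of $(d\lambda)^2$ and $(d\lambda')^2$ is correct, and your unpacking of the two linear Liouville conditions into $\alpha^u\wedge d\alpha^s<-\vert\alpha^u\wedge d\alpha^u\vert$ and $\alpha^s\wedge d\alpha^u<-\vert\alpha^s\wedge d\alpha^s\vert$ matches Lemma~\ref{lem:charanosovlin}. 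One caveat: the invariant data you start from is not smooth --- the strong bundles are only H\"older and the weak bundles only $\mathcal{C}^1$ --- and the contact/Liouville conditions are open in the $\mathcal{C}^1$ topology, not $\mathcal{C}^0$, since they involve $d\alpha_\pm$. Normalizing against strong-bundle generators as you do produces forms that are merely continuous (differentiable along the flow), for which $d\alpha^{u,s}$ only makes sense through Lie derivatives along $X$; the paper handles this by first making the defining pair $\mathcal{C}^1$ (Lemma~\ref{lem:c1}) and then approximating in $\mathcal{C}^1$, or via the $\mathcal{C}^0_X$-approximation of Appendix~\ref{appendixA}.

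The genuine gap is in the converse. Your $\alpha^u=(\alpha_+-\alpha_-)/2$ and $\alpha^s=-(\alpha_++\alpha_-)/2$ are \emph{constant-coefficient} combinations of $\alpha_\pm$; their kernels are not invariant under the flow, so the functions $\mu^{s},\mu^{u}$ you read off from $\alpha^u\wedge d\alpha^s$ and $\alpha^s\wedge d\alpha^u$ are not expansion rates of any invariant bundle, and the sentence ``one extracts genuinely invariant strong stable/unstable line fields with these pointwise rates, and a Gr\"onwall-type integration promotes them to uniform hyperbolicity'' is exactly the statement to be proved, not an argument. The actual proofs (Hozoori's, and the paper's adaptation in Section~\ref{sec:AL} for the exponential version) must first align the forms with the invariant splitting: one writes $E^{ws}$ and $E^{wu}$ as kernels of \emph{point-dependent} combinations $e^{-\sigma_u}\alpha_-+e^{\sigma_u}\alpha_+$ and $e^{-\sigma_s}\alpha_--e^{\sigma_s}\alpha_+$, where $\sigma_u,\sigma_s$ are a priori only continuous and differentiable along the flow; one then computes the relevant coefficients via $\mathcal{L}_X$ (as in Lemma~\ref{lem:approx2}, since the $\sigma$'s cannot be differentiated transversally) in terms of the invariant rates $r_u,r_s$ and the terms $X\cdot\sigma_{u,s}$, and only at that point do the Liouville inequalities force $r_s<0<r_u$; finally, $r_s<0<r_u$ for the weak splitting yields Anosovity by Lemma~\ref{lem:anosovforms}(2), which is where a Gr\"onwall-type integration legitimately appears. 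Your endpoint inequalities for the fixed combinations are correct, but without the change of frame by $\sigma_{u,s}$ they say nothing about the invariant bundles, so the bridge from the symplectic positivity to uniform hyperbolicity --- which you yourself flag as ``where the real work lies'' --- is missing from your proposal.
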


Recall that a Liouville form on a smooth manifold with boundary $V$ is a $1$-form $\lambda$ such that $\omega = d \lambda$ is symplectic, i.e., non-degenerate, and the Liouville vector field $Z$ defined by $\omega(Z, \cdot \, ) = \lambda$ is outward-pointing along the boundary of $V$. The pair $(V, \lambda)$ is called a \textbf{Liouville domain}. The above theorem shows in particular that an Anosov flow on a $3$-manifold $M$ (under some suitable orientability assumptions recalled in Definition~\ref{definition:defanosov}) gives rise to a Liouville structure on $[-1,1] \times M$ which is \emph{not Weinstein}, since the latter manifold has a non-trivial third homology group and disconnected boundary. It is natural to ask the following

\begin{question*} How do the Liouville structures constructed by Hozoori depend on the underlying Anosov flow? More precisely,
	\begin{enumerate}[label=(\arabic*)]
	\item For a given Anosov flow $\Phi$, is the space of pairs of contact forms $(\alpha_-, \alpha_+)$ as in the previous Theorem path-connected? 
	\item Does a path of Anosov flows induce a path of Liouville structures on $[-1,1] \times M$?
	\item Does every bi-contact structure $(\xi_-, \xi_+)$ supporting an Anosov flow admit a pair of contact forms $(\alpha_-, \alpha_+)$ as in the previous Theorem?
	\end{enumerate}
\end{question*}

Here, we say that a bi-contact structure $(\xi_-, \xi_+)$ supports a non-singular flow generated by a vector field $X$ if $X \in \xi_- \cap \xi_+$ (in the more precise Definition~\ref{def:bicont}, we also add a condition on the orientations of $\xi_\pm$).

\medskip

In the present article, we give a complete answer to these questions and upgrade Hozoori's correspondence to a \emph{homotopy equivalence} between the space of Anosov flows on $M$, and a space of suitable pairs of contact forms on $M$. To that extent, we will consider a \emph{different} condition on the pair $(\alpha_-, \alpha_+)$ than the one in Hozoori's theorem, and we first show

\begin{thmintro} \label{thmintro:supporting}
Let $\Phi$ be a non-singular flow on a closed oriented $3$-manifold $M$, generated by a  vector field $X$. $\Phi$ is oriented Anosov if and only if there exists a pair of contact forms $(\alpha_-, \alpha_+)$ on $M$ such that $X \in \ker \alpha_- \cap \ker \alpha_+$, and the $1$-forms $$ e^{-s} \alpha_- + e^s \alpha_+ \qquad \textrm{and} \qquad -e^{-s} \alpha_- + e^s \alpha_+$$ are positively oriented Liouville forms on $\R_s \times M$.
\end{thmintro}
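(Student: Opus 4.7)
The plan is to reduce the Liouville conditions on $\R_s \times M$ to pointwise conditions on $M$, and then translate between these and the Anosov splitting. A direct computation gives, for $\lambda_\pm^s := \pm e^{-s}\alpha_- + e^s\alpha_+$,
\[ (d\lambda_\pm^s)^2 \;=\; 2\, ds \wedge \bigl( -e^{-2s}\,\alpha_-\wedge d\alpha_- + e^{2s}\,\alpha_+\wedge d\alpha_+ \,\mp\, (\alpha_-\wedge d\alpha_+ - \alpha_+\wedge d\alpha_-) \bigr). \]
Fix a positive volume form $\nu$ on $M$ and let $A, B, C, D$ denote the scalar values (relative to $\nu$) of $\alpha_-\wedge d\alpha_-$, $\alpha_+\wedge d\alpha_+$, $\alpha_-\wedge d\alpha_+$, $\alpha_+\wedge d\alpha_-$. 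The two forms $\lambda_\pm^s$ are both positively oriented Liouville for every $s\in\R$ if and only if the pointwise strict inequalities
\[ A < 0, \qquad B > 0, \qquad -4AB > (C-D)^2 \]
hold on $M$; indeed, $A < 0 < B$ is forced by letting $s\to\pm\infty$, and minimizing $-e^{-2s}A + e^{2s}B$ over $s$ gives the value $2\sqrt{-AB}$, which must exceed $|C - D|$ at the minimizing $s$ for both sign choices.

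For the ``only if'' direction, the discriminant inequality forces pointwise linear independence of $\alpha_-$ and $\alpha_+$: if $\alpha_-(p) = c\,\alpha_+(p)$ at some point (writing $\alpha_- = f\alpha_+$ locally with $f(p) = c$), a direct computation yields $A(p) = c^2 B(p)$ and $C(p) = D(p) = c B(p)$, so $-4AB - (C-D)^2 = -4c^2 B(p)^2 \leq 0$. Hence $\xi_\pm := \ker \alpha_\pm$ are transverse and form a bi-contact structure with $X \in \xi_- \cap \xi_+$, and by the Mitsumatsu--Eliashberg--Thurston correspondence $\Phi$ is projectively Anosov. The strict discriminant inequality then encodes strict quantitative domination along the resulting dominated splitting of $TM/\R X$ and, via essentially the same expansion/contraction argument Hozoori used for his linear Liouville pair, upgrades $\Phi$ to a genuine Anosov flow.

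For the ``if'' direction, start from the invariant splitting $TM = \R X \oplus E^s \oplus E^u$ of an Anosov flow and the dual 1-forms $\beta_s, \beta_u$, characterized by $\ker\beta_s = \R X \oplus E^s$, $\ker\beta_u = \R X \oplus E^u$, and $\beta_s(X) = \beta_u(X) = 0$. These satisfy $L_X\beta_s = r_s\beta_s$ and $L_X\beta_u = r_u\beta_u$ for functions $r_s < 0 < r_u$ (using an adapted Lyapunov metric to ensure strict pointwise signs), and yield $\beta_s \wedge d\beta_u = r_u\,\nu$, $\beta_u\wedge d\beta_s = -r_s\,\nu$. Setting $\alpha_\pm := \beta_u \pm \beta_s$ and using the integrability identities $\beta_s\wedge d\beta_s = \beta_u\wedge d\beta_u = 0$ then gives $\alpha_\pm\wedge d\alpha_\pm = \pm(r_u - r_s)\,\nu$ and $C - D = -2(r_u + r_s)\,\nu$, whence
\[ -4AB - (C-D)^2 = 4(r_u - r_s)^2 - 4(r_u + r_s)^2 = -16\, r_s r_u > 0. \]
All three pointwise inequalities hold strictly, so these naive $\alpha_\pm$ satisfy our condition.

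The main obstacle is regularity: the distributions $E^s, E^u$ of an Anosov flow are in general only H\"older continuous, so $\beta_{s,u}$ and $\alpha_\pm$ above are not smooth, whereas the theorem requires smooth contact forms. I would resolve this by smoothing $E^s$ and $E^u$ in the $\mathcal{C}^0$ topology using the approximation techniques of Eliashberg--Thurston and Hozoori, and taking the corresponding smooth dual 1-forms $\tilde\beta_{s,u}$ and $\tilde\alpha_\pm := \tilde\beta_u \pm \tilde\beta_s$. Since the three inequalities are open and the original H\"older objects satisfy them strictly, a sufficiently $\mathcal{C}^1$-close smoothing produces smooth forms whose $L_X$-behavior remains close to $r_s, r_u$, preserving all three inequalities --- this is the delicate smoothing step shared with Hozoori's proof.
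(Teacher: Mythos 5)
Your reduction to the pointwise inequalities $A<0<B$, $(C-D)^2<-4AB$ is exactly the paper's Lemma~\ref{lem:charanosov}, and your construction of $\alpha_\pm$ from the dual forms of the weak bundles is the paper's standard construction of Section~\ref{sec:std} (modulo a bookkeeping slip: with your convention $\ker\beta_s=\R X\oplus E^s$, the form $\beta_s$ annihilates the weak \emph{stable} bundle and therefore satisfies $\mathcal{L}_X\beta_s=r\beta_s$ with $r>0$, not $r<0$). The real problems are in the converse direction. First, your transversality argument is not valid: if $\alpha_-(p)=c\,\alpha_+(p)$ at a single point you cannot write $\alpha_-=f\alpha_+$ near $p$, so $d\alpha_-(p)$ is not determined by the pointwise proportionality and the identities $A(p)=c^2B(p)$, $C(p)=D(p)=cB(p)$ are unjustified. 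The correct pointwise computation (Proposition~\ref{prop:anoliouv}) evaluates the relevant $3$-forms on a frame $\{X,Y,R_+\}$ containing the Reeb field and finds $f_0^2-4f_-f_+=\big(d\alpha_-(X,Y)+\alpha_-(R_+)\,d\alpha_+(X,Y)\big)^2\ge 0$, which is what contradicts the discriminant inequality. Second, and more seriously, the step ``the strict discriminant inequality \dots via essentially the same expansion/contraction argument Hozoori used \dots upgrades $\Phi$ to a genuine Anosov flow'' is precisely the content that has to be proved, and it cannot be quoted from Hozoori: his theorem concerns the \emph{linear} condition on $[-1,1]\times M$, which is genuinely different from the exponential one (Lemma~\ref{lem:alvslal}), so his inequalities are not the ones you have. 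One must redo the computation: express $\alpha_\pm$ in terms of forms $\alpha_s,\alpha_u$ cutting out $E^{wu},E^{ws}$ and angle functions $\sigma_s,\sigma_u$, which are only continuous and differentiable along $X$, so all identities must be derived by differentiating along $X$ (Lemma~\ref{lem:approx2}); the inequality $f_0^2<4f_-f_+$ then gives $\cosh^2(\sigma)(r_u+r_s)^2<(r_u-r_s)^2-(X\cdot\sigma)^2$, hence $r_s<0<r_u$ and Anosovity via Lemma~\ref{lem:anosovforms}. Without this computation the ``if'' direction is asserted rather than proved.

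The smoothing step in the other direction is also muddled. You cannot take a ``sufficiently $\mathcal{C}^1$-close smoothing'' of data that is merely H\"older, and plain $\mathcal{C}^0$-approximation gives no control on $d\alpha_\pm$, which enter all three of your inequalities; moreover the approximating forms must still annihilate $X$, or the hypothesis $X\in\ker\alpha_-\cap\ker\alpha_+$ is lost. The paper resolves this in two ways: for smooth Anosov flows in dimension three the weak bundles are $\mathcal{C}^1$ (Lemma~\ref{lem:c1}, via Hasselblatt and Simi\'c's trick), so the defining pair may be taken $\mathcal{C}^1$ and any smooth pair $\mathcal{C}^1$-close to it with $\alpha'_\pm(X)=0$ is an AL pair by openness; in lower regularity the correct topology is $\mathcal{C}^0_X$ (approximate the form \emph{and} its Lie derivative along $X$ simultaneously, Lemma~\ref{lem:smooth2}), which suffices because for forms annihilating $X$ the quantities $f_\pm,f_0$ are computed from $\alpha_\pm$ and $\mathcal{L}_X\alpha_\pm$ alone. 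As written, your smoothing paragraph does not establish the forward implication for a general (non-algebraic) Anosov flow.
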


In the terminology of~\cite[Definition 1]{MNW}, we say that a pair of contact forms $(\alpha_-, \alpha_+)$ on a manifold $M$ is a \textbf{Liouville pair} if the $1$-form $$ \lambda \coloneqq e^{-s} \alpha_- + e^s \alpha_+$$ is a positively oriented Liouville form on $\R_s \times M$. By positively oriented, we mean that the volume form $d\lambda \wedge d\lambda$ is compatible with the natural orientation on $\R \times M$ induced by the natural orientation on $\R$ and the orientation on $M$. 

\begin{warn}
At first glance, Theorem~\ref{thmintro:supporting} seems almost identical to Hozoori's theorem. However, we warn the reader that the condition on $(\alpha_-, \alpha_+)$ that we consider is \textbf{different} than Hozoori's one. Indeed, there exist pairs of contact forms $(\alpha_-, \alpha_+)$ which are Liouville pairs as defined above, but such that $$(1-t) \alpha_- + (1+t) \alpha_+$$ is \textbf{not} a Liouville form on $[-1,1]_t \times M$; see Lemma~\ref{lem:alvslal}. It turns out that our condition  enjoys some nice symmetries (see Lemma~\ref{lem:balanced}) which make it much easier to work with. For instance, our notion of Liouville pair is easier to characterize than Hozoori's one (compare Lemma~\ref{lem:charanosov} which involves a single equation between three quantities, and Lemma~\ref{lem:charanosovlin} which involves two independent equations between four quantities). More importantly, \textbf{we do not know if our main results (Theorem~\ref{thmintro:homeq} and Theorem~\ref{thmintro:louvstr} below) are true for Hozoori's notion of Liouville pair.} The corresponding computations are much more complicated because of their lack of symmetry. 
\end{warn}

Theorem~\ref{thmintro:supporting} motivates the following

\begin{defn} \label{def:anoliouv}
An \textbf{Anosov Liouville pair} (AL pair for short) on an oriented $3$-manifold $M$ is a pair of contact forms $(\alpha_-, \alpha_+)$ such that both $(\alpha_-, \alpha_+)$ and $(-\alpha_-, \alpha_+)$ are Liouville pairs. We denote by $\mathcal{AL} \coloneqq \mathcal{AL}(M) \subset \Omega^1(M) \times \Omega^1(M)$ the space of Anosov Liouville pairs on $M$.
\end{defn}

Notice that we do not assume that $\xi_\pm \coloneqq \ker \alpha_\pm$ are transverse, since this is implied by the Liouville conditions; see Proposition~\ref{prop:anoliouv}. By Theorem~\ref{thmintro:supporting}, the intersection $\xi_- \cap \xi_+$ is spanned by an Anosov vector field. A positive time reparametrization of an Anosov flow remains Anosov, and we denote by $\mathcal{AF} \coloneqq \mathcal{AF}(M)$ the space of smooth oriented Anosov flows on $M$ up to positive time reparametrization. Alternatively, $\mathcal{AF}$ can be viewed as the space of smooth unit Anosov vector fields on $M$ for an arbitrary Riemannian metric on $M$, or the space of smooth $1$-dimensional oriented foliations spanned by Anosov vector fields on $M$, together with some extra orientation data. Hence, there is a natural continuous \emph{intersection map}
$$
\begin{array}{rccc}
\mathcal{I} : & \mathcal{AL} & \longrightarrow & \mathcal{AF} \\
		   & (\alpha_-, \alpha_+) & \longmapsto & \ker \alpha_- \cap \ker \alpha_+
\end{array} $$
which sends an AL pair to the $1$-dimensional (oriented) distribution obtained by intersecting the underlying contact structures. Here, we endow the spaces $\mathcal{AL}$ and $\mathcal{AF}$ with the $\mathcal{C}^\infty$ topology. Denoting by $\mathcal{BC}$ the space of smooth bi-contact structures on $M$ and by $\mathbb{P}\mathcal{AF}$ the space of smooth oriented projectively Anosov flows on $M$ up to positive time reparametrization, we have a similar intersection map
$$
\begin{array}{rccc}
\mathbb{P}\mathcal{I} : & \mathcal{BC} & \longrightarrow & \mathbb{P}\mathcal{AF} \\
		   & (\xi_-, \xi_+) & \longmapsto & \xi_- \cap \xi_+
\end{array} $$
as well as a \emph{kernel map}
$$
\begin{array}{rccc}
\underline{\mathrm{ker}} : & \mathcal{AL} & \longrightarrow & \mathcal{BC} \\
		   & (\alpha_-, \alpha_+) & \longmapsto & (\ker \alpha_-, \ker \alpha_+)
\end{array} $$

The main results of this paper, answering the Questions (1), (2) and (3) above, can be summarized as follows.

\begin{thmintro} \label{thmintro:homeq}
The maps in the commutative diagram
$$ \begin{tikzcd}[row sep=large]
\mathcal{AL} \arrow[r, "\underline{\ker}"] \arrow[d, "\mathcal{I}"']		&		\mathcal{BC} \arrow[d, "\mathbb{P}\mathcal{I}"] \\
\mathcal{AF} \arrow[hookrightarrow,r]	&		\mathbb{P}\mathcal{AF}
\end{tikzcd}$$
satisfy the following properties.
\begin{itemize}
\item $\mathcal{I}$ and $\mathbb{P}\mathcal{I}$ are acyclic Serre fibrations (Theorem~\ref{thm:homeq} and Theorem~\ref{thm:homeqproj}).
\item $\underline{\ker}$ is an acyclic Serre fibration onto its image (Theorem~\ref{thm:hominc}).
\item The inclusion $\underline{\mathrm{ker}}(\mathcal{AL}) \subset \mathbb{P}\mathcal{I}^{-1}\big(\mathcal{AF}\big)$ is strict in general (Theorem~\ref{thm:supporting}), but it is a homotopy equivalence (Theorem~\ref{thm:hominc'}).
\end{itemize}
\end{thmintro}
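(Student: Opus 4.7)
The plan is to decompose Theorem~\ref{thmintro:homeq} into its four constituent claims and prove each via a common principle: once the underlying dynamics or contact geometry is fixed, the residual data specifying an AL pair lies in an open and convex (hence contractible) subspace, thanks to the symmetric normalization in Definition~\ref{def:anoliouv}. Combined with smooth dependence of invariant splittings and standard partition-of-unity gluing, this yields both contractibility of fibers and the homotopy lifting property.

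For the first bullet, fibers of $\mathcal{I}$ are non-empty by Theorem~\ref{thmintro:supporting}. To see they are contractible, fix an Anosov flow with invariant splitting $TM = E^s \oplus \R X \oplus E^u$; since $\alpha_\pm(X) = 0$, an AL pair is determined by the restrictions of $\alpha_\pm$ to $E^s \oplus E^u$, and the two Liouville conditions together with the algebraic characterization of Lemma~\ref{lem:charanosov} cut out an open convex set, whose convexity is genuine because of the balanced symmetry of Lemma~\ref{lem:balanced}. The homotopy lifting property follows by a parametric construction: given a compactly parametrized family of Anosov flows, local AL-pair lifts obtained from continuous dependence of the splitting are spliced via a partition of unity, the resulting convex combinations remaining AL pairs. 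The same strategy, with the dominated splitting of a projectively Anosov flow in place of the Anosov splitting, yields the statement for $\mathbb{P}\mathcal{I}$. For the second bullet, fibers of $\underline{\ker}$ over a bi-contact structure $(\xi_-,\xi_+) \in \underline{\ker}(\mathcal{AL})$ consist of positive rescalings of a reference pair of contact forms satisfying the Liouville pair inequality; this is again an open convex condition, so fibers are contractible, and the lifting property follows from this same convexity.

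The third bullet is an existence statement: one must exhibit an explicit bi-contact structure supporting an Anosov flow whose contact forms cannot be rescaled to form an AL pair. This should be produced by constructing a pair $(\xi_-, \xi_+)$ for which the three-quantity identity of Lemma~\ref{lem:charanosov} fails for every choice of contact forms, e.g.\ via a sign obstruction involving the orientations of $E^s, E^u$. The fourth bullet then follows from a two-out-of-three argument: the map $\mathcal{I}$ factors as
$$ \mathcal{AL} \xrightarrow{\underline{\ker}} \underline{\ker}(\mathcal{AL}) \hookrightarrow \mathbb{P}\mathcal{I}^{-1}(\mathcal{AF}) \xrightarrow{\mathbb{P}\mathcal{I}} \mathcal{AF}. $$
By the first two bullets, the outer maps are acyclic Serre fibrations (the rightmost one as a pullback of $\mathbb{P}\mathcal{I}$ along the inclusion $\mathcal{AF} \hookrightarrow \mathbb{P}\mathcal{AF}$), and the composition $\mathcal{I}$ is also an acyclic Serre fibration. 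Two-out-of-three for weak equivalences forces the inclusion in the middle to be a weak equivalence, and since all spaces are Fréchet manifolds with the homotopy type of CW complexes, this is a genuine homotopy equivalence.

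The main obstacle will be the parametric lifting in the first bullet: while Theorem~\ref{thmintro:supporting} only produces a single AL pair for each Anosov flow, homotopy lifting requires a smoothly varying family over a varying family of flows. This combines smooth dependence of invariant splittings with a gluing argument that must preserve the open Liouville condition, and is precisely where the balanced symmetry of the notion of AL pair used here, absent in Hozoori's formulation (cf.\ the warning above), plays a decisive role: it guarantees that convex combinations of AL pairs remain AL pairs, making partition-of-unity gluing possible. Without this symmetry, the argument would break down, which is consistent with the paper's observation that the main results are not known for Hozoori's original definition.
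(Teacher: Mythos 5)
Your fourth bullet (the two-out-of-three argument via the factorization of $\mathcal{I}$ through $\underline{\ker}(\mathcal{AL}) \subset \mathbb{P}\mathcal{I}^{-1}(\mathcal{AF})$) is exactly the paper's argument for Theorem~\ref{thm:hominc'}, and the general scheme ``contractible fibers $+$ fibration-type property'' is also the paper's. But the core of your proposal rests on a claim that is unjustified and almost certainly false: that the set of AL pairs supporting a fixed Anosov flow is \emph{convex} in $\Omega^1(M)\times\Omega^1(M)$, and that ``convex combinations of AL pairs remain AL pairs'' so that partition-of-unity splicing gives homotopy lifting. The conditions of Lemma~\ref{lem:charanosov} are quadratic in $(\alpha_-,\alpha_+)$, and two AL pairs supporting the same flow need not have the same kernels (the planes $\xi_\pm$ can rotate about $X$), so neither contactness nor the inequality $f_0^2<4f_-f_+$ is preserved under averaging. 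Lemma~\ref{lem:balanced} gives an action of $\mathcal{C}^\infty(M,\R)$ and a balanced normalization, not convexity. The paper's actual proof of fiber contractibility (Theorem~\ref{thm:contract}) is genuinely more delicate: one passes to $\mathcal{C}^1$ pairs, extracts $(\sigma_s,\sigma_u)$ and a defining pair $(\alpha_s,\alpha_u)$ from the weak stable/unstable bundles, and builds an explicit strong deformation retraction onto the ``standard'' pairs, verified by the monotone inequality~\eqref{ineq:al2}; convexity only appears at the level of the space of defining pairs (Lemma~\ref{lem:contract}) and of the smoothing step (Lemma~\ref{lem:heq}). Likewise, your parametric lifting argument is precisely what the paper says it cannot do directly: it would require controlling how the (projectively) Anosov splitting varies with the flow, and for projectively Anosov flows no such continuity result is known. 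The paper sidesteps this by proving that $\mathcal{I}$, $\mathbb{P}\mathcal{I}$ and $\underline{\ker}$ are \emph{topological submersions} (via the elementary projection $\alpha_\pm \mapsto \alpha_\pm - \tfrac{\alpha_\pm(X')}{\theta(X')}\theta$, which never touches the splitting) and invoking the fact that a topological submersion with weakly contractible fibers is an acyclic Serre fibration (Lemma~\ref{lem:topfib}).

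The third bullet is also not addressed by your sketch. Strictness of $\underline{\ker}(\mathcal{AL}) \subset \mathbb{P}\mathcal{I}^{-1}(\mathcal{AF})$ is not a ``sign obstruction involving the orientations of $E^s, E^u$'': orientations are already built into the definitions, and any supporting bi-contact structure is compatible with them. The actual obstruction is dynamical and quantitative: Theorem~\ref{thm:supporting} shows that \emph{every} supporting bi-contact structure of an Anosov flow is the kernel of an AL pair if and only if the flow preserves a volume form, the proof going through an approximate cohomological equation for $r_u+r_s$ (the Claim with $h_\epsilon$), a Liv\v{s}ic-type argument, and a transitivity statement proved via SRB measures in Appendix~\ref{appendixB}; strictness then follows because volume preservation can be destroyed by a perturbation near a closed orbit. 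Without replacing your convexity and splicing claims by arguments of this kind (or by the paper's submersion-plus-retraction route), the first three bullets remain unproven.
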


Recall that an acyclic Serre fibration is a Serre fibration which is also a weak homotopy equivalence, or equivalently, whose fibers are weakly contractible. All the topological spaces under consideration have the homotopy type of a CW complex (see the beginning of Section~\ref{sec:spaces}), so these acyclic Serre fibrations are homotopy equivalences by the Whitehead theorem. Unpacking the notations, \begin{itemize}
\item $\underline{\mathrm{ker}}(\mathcal{AL})$ is the space of bi-contact structures $(\xi_-, \xi_+)$ admitting contact forms $\alpha_-, \alpha_+$ such that $(\alpha_-, \alpha_+)$ is an AL pair,
\item $\mathbb{P}\mathcal{I}^{-1}\big(\mathcal{AF}\big)$ is the space of bi-contact structures supporting an Anosov flow.
\end{itemize}

We emphasize that the top row in the diagram of Theorem~\ref{thmintro:homeq} \emph{only involves concepts from contact and symplectic geometry.} This enables us to identify projectively Anosov flows with bi-contact structures, and Anosov flows with bi-contact structures satisfying a quantitative constraint, coming from the existence of a suitable pair of contact forms. Moreover, the space of AL pairs for a fixed underlying bi-contact structure is (weakly) contractible if non-empty. Hence, AL pairs can be thought of as \emph{auxiliary data attached to bi-contact structures}.

Our results can be summarized by the following slogan:

\bigskip

    \noindent\fbox{%
    \parbox{\linewidth - 3\fboxsep}{%
    The topological properties of the spaces $\mathcal{AF}$, $\mathbb{P}\mathcal{AF}$ and the inclusion $\mathcal{AF} \subset \mathbb{P}\mathcal{AF}$ can be translated into topological properties of the spaces $\mathcal{AL}$, $\mathcal{BC}$, and the map $\underline{\mathrm{ker}} : \mathcal{AL} \rightarrow \mathcal{BC}$, and vice versa.
    }%
    }

\bigskip

One important missing piece in this correspondence between Anosov dynamics and contact topology is the mirror notion of \emph{topological} or \emph{orbit equivalence} of flows in the contact world.

\begin{defn} Two Anosov flows $\Phi = \{ \phi^t\}$ and $\Psi= \{ \psi^t\}$ on $M$ are \textbf{topologically equivalent}, or \textbf{orbit equivalent}, if there exist a homeomorphism $h: M \rightarrow M$ and a continuous map $\tau : \R \times M \rightarrow \R$ such that $\tau(t,x) \geq 0$ for $t \geq 0$, and $$\psi^{\tau(t,x)} = h \circ \phi^t \circ h^{-1}(x)$$ for every $t \in \R$ and $x \in M$.
\end{defn}

In other words, the topological equivalence $h$ sends the oriented trajectories of $\phi$ onto the oriented trajectories of $\psi$, but does not necessarily preserves the parametrization. The structural stability of Anosov flows with smooth dependence on parameters~\cite[Theorem A.1]{LMM} implies that two smooth Anosov flows which are homotopic through smooth Anosov flows are topologically equivalent through a topological equivalence which is isotopic to the identity. We do not know if the converse is true.

\begin{question} If two (smooth) Anosov flows on $M$ are topologically equivalent (via a topological equivalence which is merely continuous), what can be said about the spaces of AL pairs supporting them? How to characterize topological equivalence in terms of AL pairs?
\end{question}

It is not clear to us how the (hyper)tight contact structure $\xi_\pm$ associated with a Anosov flow behave under topological equivalence. Solving these questions could have a significant impact in the understanding of Anosov flows from the perspective of contact geometry. For instance, a fundamental problem in $3$-dimensional Anosov dynamics is the following

\begin{question}[\cite{B17}] \label{quest:finite} On a closed $3$-manifold, are there finitely many Anosov flows up to topological equivalence?
\end{question}

It is known by the work of Colin, Giroux and Honda~\cite{CGH} that an atoroidal $3$-manifold carries finitely many isotopy classes of \emph{tight} contact structures. Although toroidal (and irreducible) $3$-manifolds can carry infinitely many isotopy classes of tight contact structures, all of them can be obtained from finitely many contact structures by performing \emph{Lutz twists} along suitable tori; see~\cite{CGH}. The authors also show that there are finitely many tight contact structures for a prescribed \emph{Giroux torsion}, up to isotopy and Dehn twists. Since the contact structures defined by (Anosov) Liouville pairs are by definition exactly semi-fillable, they are strongly fillable~\cite[Corollary 1.4]{E04}, hence they have zero Giroux torsion~\cite[Corollary 3]{G06}. This observation plays an essential role in the recent solution of Question~\ref{quest:finite} for the class of \emph{$\R$-covered} Anosov flows~\cite{BM21, M23}.

We hope that this \emph{coarse classification} of tight contact structures together with our homotopy correspondence could lead to important results in the classification of Anosov flows on $3$-manifolds. To this end, it is crucial to understand the following

\begin{question} Let $(\alpha_-, \alpha_+)$ be an AL pair on $M$. Fixing $\alpha_+$, what can be said about the Anosov flow supported by an AL pair $(\alpha'_-, \alpha_+)$, where $\alpha'_-$ is isotopic to $\alpha_-$?
\end{question}

The main difficulty here is that a path $(\alpha^t_-)_{t\in [0,1]}$ of contact forms from $\alpha^0_- = \alpha_-$ to $\alpha^1_- = \alpha'_-$ might not induce a path of bi-contact structures, as $\xi^t_- = \ker \alpha^t_-$ and $\xi_+ = \ker \alpha_+$ might fail to be transverse for some $t \in (0,1)$. Even if transversality holds, $(\alpha^t_-, \alpha_+)$ might fail to be an AL pair. Nevertheless, one could try to analyze the failure of these properties for a generic path $(\alpha^t_-)_t$, and apply suitable modifications to it. We wish to explore this direction in future work.

A closely related question, already raised by Hozoori~\cite[Question 7.2]{H22a} is the following.

\begin{question} Let $\Phi_0$ and $\Phi_1$ be two Anosov flows on $M$ and assume that they are homotopic through \emph{projectively} Anosov flows. Equivalently, assume that there exist two AL pairs $(\alpha_-^0, \alpha_+^0)$ and $(\alpha_-^1, \alpha_+^1)$ supporting $\Phi_0$ and $\Phi_1$, respectively, such that their underlying bi-contact structures are homotopic (through bi-contact structures). Are $\Phi_0$ and $\Phi_1$ homotopic through Anosov flows, i.e., are $(\alpha_-^0, \alpha_+^0)$ and $(\alpha_-^1, \alpha_+^1)$ homotopic through AL pairs? Are $\Phi_0$ and $\Phi_1$ topologically equivalent? 
\end{question}

\medskip

From the point of view of Liouville geometry, it is natural to weaken the definition of AL pairs as follows.

\begin{defn} A Liouville pair $(\alpha_-, \alpha_+)$ on $M$ is a \textbf{weak Anosov Liouville pair} (wAL pair for short) if it satisfies the following two conditions.
\begin{enumerate}[label=(\arabic*)]
	\item The contact plane fields $\xi_\pm \coloneqq \ker \alpha_\pm$ are everywhere transverse,
	\item The intersection $\xi_- \cap \xi_+$ is spanned by an Anosov vector field.
\end{enumerate}

An \textbf{Anosov Liouville structure} (AL structure for short) on $V=\R_s \times M$ is a pair $(\omega, \lambda)$ where $\omega = d \lambda$ is a symplectic form and $$\lambda =  e^{-s} \alpha_- + e^s \alpha_+$$ for a weak Anosov Liouville pair $(\alpha_-, \alpha_+)$.
We call the triple $(V, \omega, \lambda)$ an \textbf{Anosov Liouville manifold}.

An Anosov flow $\Phi$ is \textbf{supported} by the AL structure $(\omega, \lambda)$ if the vector field $X$ generating $\Phi$ satisfies $X \in \xi_- \cap \xi_+$.
\end{defn}

Note that the definition of wAL pairs \emph{does} make reference to the underlying Anosov flow, as opposed to AL pairs. By Theorem~\ref{thmintro:supporting}, there is an inclusion $\mathcal{AL} \subset \mathcal{AL}^w$, where $\mathcal{AL}^w$ denotes the space of wAL pairs on $M$. This inclusion is strict in general. The map $\mathcal{I}$ naturally extends to a map $\mathcal{I}^w : \mathcal{AL}^w \rightarrow \mathcal{AF}$, and similarly to the first bullet of Theorem~\ref{thmintro:homeq}, we have:

\begin{thmintro} \label{thmintro:louvstr}
The map $\mathcal{I}^w : \mathcal{AL}^w \rightarrow \mathcal{AF}$ is an acyclic Serre fibration, hence a homotopy equivalence.
\end{thmintro}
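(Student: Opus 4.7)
The plan is to derive this from the analogous statement for $\mathcal{I}$ (first bullet of Theorem~\ref{thmintro:homeq}). Since $\mathcal{AL} \subset \mathcal{AL}^w$ and $\mathcal{I}^w$ restricts to $\mathcal{I}$ on this subspace, the key is to show that the inclusion $\iota : \mathcal{AL} \hookrightarrow \mathcal{AL}^w$ is a fiberwise weak homotopy equivalence over $\mathcal{AF}$, in fact realized by a fiberwise deformation retraction. From this, weak contractibility of the fibers of $\mathcal{I}^w$ follows from weak contractibility of the fibers of $\mathcal{I}$, and the Serre fibration property of $\mathcal{I}^w$ can then be deduced from that of $\mathcal{I}$ by a standard concatenation argument: any lifting problem for $\mathcal{I}^w$ can be translated, via the fiberwise retraction, to one for $\mathcal{I}$, solved there, and then reassembled using the retraction homotopy.

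To construct the fiberwise deformation retraction, I would exploit the scaling freedom $(\alpha_-, \alpha_+) \mapsto (f\alpha_-, g\alpha_+)$ with $f, g \in \mathcal{C}^\infty(M, \R_{>0})$, which preserves both the flow $\mathcal{I}^w(\alpha_-, \alpha_+)$ and the underlying bi-contact structure $(\ker\alpha_-, \ker\alpha_+)$. Using the characterization of the wAL and AL conditions in terms of differential inequalities analogous to Lemma~\ref{lem:charanosov}, one identifies open subsets of $\mathcal{C}^\infty(M, \R_{>0})^2$ cutting out scalings that preserve the wAL or AL property; both are nonempty, the AL case by Theorem~\ref{thmintro:supporting}. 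A canonical AL representative depending continuously on the Anosov flow can be produced from an adapted Riemannian metric and the associated normalization of the stable/unstable splitting, and I would construct the retraction by interpolating any wAL pair toward this canonical representative within the wAL region. Deformations of the underlying bi-contact structure are accommodated by combining this scaling with the convex structure of the space of positive $1$-forms defining a fixed cooriented contact plane field.

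The main obstacle is verifying that all these constructions are continuous in the $\mathcal{C}^\infty$ topology and assemble into a genuine fiberwise deformation retraction over $\mathcal{AF}$. This requires structural stability of Anosov flows with smooth dependence on parameters (as used for the Serre fibration property of $\mathcal{I}$), a partition-of-unity argument on $\mathcal{AF}$ to globalize the local canonical choices of AL representatives, and a careful check that the interpolations remain inside the open wAL region throughout. I expect this to proceed with essentially the same techniques used in the proof of the first bullet of Theorem~\ref{thmintro:homeq}, without new conceptual difficulties beyond this technical bookkeeping.
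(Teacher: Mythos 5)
Your route is genuinely different from the paper's, and it has two real gaps. The paper never compares $\mathcal{AL}^w$ with $\mathcal{AL}$: it proves directly that each fiber of $\mathcal{I}^w$ is contractible (the weak AL case of Theorem~\ref{thm:contract}, obtained by adapting Lemma~\ref{lem:heq} and Lemma~\ref{lem:defret} so that the fiber retracts onto pairs $\alpha_\pm=\alpha_u\mp\alpha_s$ with $(\alpha_s,\alpha_u)$ satisfying only $r_u>0$ and $r_s<r_u$, a convex condition), and that $\mathcal{I}^w$ is a topological submersion (the proof of Lemma~\ref{lem:topsub} applies verbatim, since the correction $\alpha_\pm\mapsto \alpha_\pm-\frac{\alpha_\pm(X')}{\theta(X')}\,\theta$ only uses that the wAL condition is open); Lemma~\ref{lem:topfib} then gives the acyclic Serre fibration. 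Your plan instead hinges on a deformation retraction of $\mathcal{AL}^w$ onto $\mathcal{AL}$ that is fiberwise over $\mathcal{AF}$ and continuous in the flow. The first gap is in its construction: your primary mechanism, rescaling $(\alpha_-,\alpha_+)\mapsto(f\alpha_-,g\alpha_+)$, preserves the underlying bi-contact structure, but by Theorem~\ref{thm:supporting} a non-volume-preserving Anosov flow admits supporting bi-contact structures that are not kernels of any AL pair, and you give no argument that the bi-contact structure underlying an arbitrary wAL pair is AL-definable; so a scaling-only retraction cannot work in general. The fallback sentence about the ``convex structure of the space of positive $1$-forms defining a fixed cooriented plane field'' does not meet the actual difficulty, which is to deform the plane fields while keeping $X$ in both kernels and preserving the open Liouville inequalities; that is exactly the content of the paper's interpolation through the functions $\sigma_s,\sigma_u$ and the monotonicity of~\eqref{ineq:al2} in Lemma~\ref{lem:defret}, which you would have to redo. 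Moreover, making any such retraction continuous \emph{over} $\mathcal{AF}$ requires the continuous dependence of the Anosov splitting on the flow and family versions of Section~\ref{section:contract}; the paper explicitly identifies this as the additional (unproven) work needed for shrinkability of $\mathcal{I}$ and deliberately structures the argument to avoid it.

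The second gap is the transfer of the Serre fibration property from $\mathcal{I}$ to $\mathcal{I}^w$. Even granting the fiberwise retraction, the ``concatenation'' argument does not yield the strict homotopy lifting property: pushing the initial lift into $\mathcal{AL}$ by the retraction and solving the lifting problem there solves a problem whose initial condition is the \emph{retracted} lift and, after reassembling with the retraction homotopy, what you have lifted is a reparametrized base homotopy (constant on an initial subinterval), not the given one. This yields at best a weak (Dold-type) lifting property, which is not the HLP for disks defining a Serre fibration. This is precisely the kind of lifting argument the paper avoids: it never verifies an HLP directly for any of these maps, but instead combines the topological submersion property with contractibility of fibers via Lemma~\ref{lem:topfib}. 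So as written your proposal does not prove the stated theorem; its honest core still reduces to establishing fiber contractibility and a fibration-type property for $\mathcal{I}^w$ directly, which is what the paper does far more economically.
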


\begin{corintro} \label{cor} Let $\Phi_0$ and $\Phi_1$ be two Anosov flows on $M$, supported by AL structures $(\omega_0, \lambda_0)$ and $(\omega_1, \lambda_1)$, respectively. If $\Phi_0$ and $\Phi_1$ are homotopic through Anosov flows, then $(\omega_0, \lambda_0)$ and $(\omega_1, \lambda_1)$ are homotopic through AL structures, and $(V, \omega_0, \lambda_0)$ and $(V, \omega_1, \lambda_1)$ are exact symplectomorphic.
\end{corintro}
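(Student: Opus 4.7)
The plan is to reduce the corollary to Theorem~\ref{thmintro:louvstr} for the first conclusion, and then to a Moser-type argument for the second. An AL structure on $V = \R\times M$ is entirely determined by its underlying wAL pair, so constructing a homotopy of AL structures between $(\omega_0,\lambda_0)$ and $(\omega_1,\lambda_1)$ is the same as connecting the wAL pairs $(\alpha_-^0,\alpha_+^0)$ and $(\alpha_-^1,\alpha_+^1)$ by a path in $\mathcal{AL}^w$. The given homotopy of Anosov flows is a path $\gamma: [0,1] \to \mathcal{AF}$ with $\gamma(i) = \Phi_i$. Since $\mathcal{I}^w$ is an acyclic Serre fibration by Theorem~\ref{thmintro:louvstr}, I first apply the homotopy lifting property with initial lift $(\alpha_-^0,\alpha_+^0)$ to obtain a path $\tilde\gamma : [0,1] \to \mathcal{AL}^w$ above $\gamma$. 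Its endpoint $\tilde\gamma(1)$ and $(\alpha_-^1,\alpha_+^1)$ both belong to the fiber $(\mathcal{I}^w)^{-1}(\Phi_1)$, which is weakly contractible, hence path-connected, by acyclicity; concatenating $\tilde\gamma$ with a path inside this fiber yields the desired path of wAL pairs $(\alpha_-^t, \alpha_+^t)$ and consequently a homotopy $(\omega_t,\lambda_t)$ of AL structures.

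For the exact symplectomorphism I would then run Moser's method on this family. Define the time-dependent vector field $X_t$ on $V$ uniquely by $\iota_{X_t}\omega_t = -\dot\lambda_t$, and let $\phi_t$ be its flow with $\phi_0 = \mathrm{id}$. Granted that $\phi_t$ exists on $[0,1]$, the Cartan identity yields
$$ \frac{d}{dt}\phi_t^*\lambda_t = \phi_t^*\bigl(\dot\lambda_t + \iota_{X_t}\omega_t + d\,\iota_{X_t}\lambda_t\bigr) = \phi_t^*\, d(\iota_{X_t}\lambda_t), $$
so $\phi_1^*\lambda_1 - \lambda_0 = df$ with $f = \int_0^1 \phi_t^*(\iota_{X_t}\lambda_t)\,dt$, making $\phi_1$ an exact symplectomorphism from $(V,\omega_0,\lambda_0)$ onto $(V,\omega_1,\lambda_1)$.

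The hard part, and the step I expect to be the main obstacle, is the completeness of $X_t$, since $V = \R\times M$ is non-compact and $\lambda_t$ grows exponentially at both ends. The saving feature is the rigid asymptotic shape $\lambda_t = e^{-s}\alpha_-^t + e^s\alpha_+^t$: as $s\to+\infty$, both $\omega_t$ and $\dot\lambda_t$ carry the same prefactor $e^s$, so writing $X_t = a_t\,\partial_s + v_t$ with $v_t$ tangent to $M$ and decomposing $TM$ along the Reeb vector field of $\alpha_+^t$ and the contact plane $\xi_+^t$, the defining equation determines $(a_t, v_t)$ from $\dot\alpha_+^t$ by bounded algebraic operations controlled by the non-degenerate form $d\alpha_+^t|_{\xi_+^t}$, with the exponential prefactors cancelling. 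A symmetric analysis handles $s\to-\infty$. Hence $X_t$ is uniformly bounded with respect to the complete product metric $ds^2 \oplus g_M$ on $V$, which guarantees completeness of $X_t$ on $[0,1]$ and the existence of $\phi_1$. A cleaner alternative I would also consider is to identify $(V,\omega_t,\lambda_t)$ as the symplectic completion of a Liouville domain $[-N,N]\times M$ for $N$ large enough (using outward-pointing behaviour of the Liouville vector field of $\lambda_t$ there), apply the classical Moser lemma on that compact piece, and then extend the resulting diffeomorphism cylindrically along the ends via the standard model for the completion.
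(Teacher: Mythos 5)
Your first step coincides with the paper's: the paper simply invokes Theorem~\ref{thmintro:louvstr} to produce a continuous path of AL structures joining $(\omega_0,\lambda_0)$ to $(\omega_1,\lambda_1)$, and your path-lifting-plus-fiber-connectedness argument is precisely how that invocation is justified, so there is no real divergence there. Where you genuinely depart from the paper is in the second step. The paper smooths the path in $t$, observes that for some $A>0$ all the associated Liouville vector fields are transverse to $\{\pm A\}\times M$, and then quotes \cite[Proposition 11.8]{CE12}, which packages the Moser-type argument for homotopies of Liouville structures on completions of Liouville domains; your ``cleaner alternative'' is essentially this route. Your primary argument instead runs Moser directly on the non-compact manifold $\R\times M$ and controls completeness of the Moser field by hand: writing $\omega_t=e^{s}\bigl(ds\wedge\alpha_+^t+d\alpha_+^t\bigr)+e^{-s}\bigl(-ds\wedge\alpha_-^t+d\alpha_-^t\bigr)$, the negative-end contribution is of order $e^{-2s}$ relative to the symplectization part once the common factor $e^{s}$ is removed, so $\iota_{X_t}\omega_t=-\dot\lambda_t$ is a uniformly small perturbation of the standard symplectization computation (where $\alpha_+^t(X_t)=0$, $ds(X_t)=-\dot\alpha_+^t(R_+^t)$, and the $\xi_+^t$-component is determined by the nondegenerate form $d\alpha_+^t|_{\xi_+^t}$), and a symmetric analysis at $s\to-\infty$ plus compactness of $M\times[0,1]$ gives a uniform bound on $X_t$ for the complete product metric. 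This is correct and yields a self-contained proof of the exact symplectomorphism that does not rely on \cite{CE12}; what the citation buys the paper is brevity and, as a by-product, the refinement that $\psi^*\lambda_1-\lambda_0$ is compactly supported, which your primitive $f=\int_0^1\phi_t^*(\iota_{X_t}\lambda_t)\,dt$ does not obviously provide (though the corollary as stated does not require it).

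One point you should make explicit, and which the paper does address in a sentence, is that the path coming from the Serre-fibration lifting is merely continuous in $t$, whereas your Moser field needs $\dot\lambda_t$: you must first smooth the path of wAL pairs rel endpoints. This is routine because $\mathcal{AL}^w$ is open in a Fr\'echet space (replace the path by a nearby piecewise-affine one and smooth the corners), but as written your proof skips it.
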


Here, an exact symplectomorphism $\psi : (V,\omega_0, \lambda_0) \rightarrow (V, \omega_1, \lambda_1)$ is a diffeomorphism such that $\psi^*\lambda_1 = \lambda_0 + df$ for some smooth function $f : V \rightarrow \R$. In Corollary~\ref{cor}, we can further assume that $df$ has compact support.

\begin{proof}[Proof of Corollary~\ref{cor}] If $\Phi_0$ and $\Phi_1$ are homotopic through Anosov flows, Theorem~\ref{thmintro:louvstr} provides a \emph{continuous} path of smooth AL structures from $(\omega_0, \lambda_0)$ to $(\omega_1, \lambda_1)$. This path can be smoothed while ensuring the existence of some number $A > 0$ such that the corresponding Liouville vector fields are all transverse to $\{\pm A\} \times M$. Then, \cite[Proposition 11.8]{CE12} provides an exact symplectomorphism $\psi$ such that $\psi^*\lambda_1 - \lambda_0$ is compactly supported.
\end{proof}

Anosov Liouville manifolds have numerous interesting invariants coming from Floer theory, e.g., symplectic cohomology and wrapped Fukaya category. As an important consequence of Corollary~\ref{cor}, these are \emph{invariants of the underlying Anosov flow}, and only depend on its homotopy class in the space of Anosov flows. Some of these invariants are studied in detail in~\cite{CLMM}. To our knowledge, this is the first thorough analysis of symplectic invariants of non-Weinstein Liouville manifolds.

\medskip

One can also consider Liouville pairs $(\alpha_-, \alpha_+)$ whose underlying contact planes are everywhere transverse. We call such pairs \textbf{transverse Liouville pairs}. They correspond to particular projectively Anosov flow that we call \textbf{semi-Anosov flows}, see Remark~\ref{rem:transversepair} below. General Liouville pairs (without the transversality assumption) are more complicated to understand, but their underlying contact planes can only intersect \emph{positively}, see Remark~\ref{rem:positiv} below. In the terminology of~\cite{CS11}, they constitute \textbf{positive contact pairs}. 

These geometric structures are summarized in the following diagram; the ones in blue are the main protagonists of this article. Liouville pairs and positive contact pairs will be investigated in forthcoming work~\cite{Mas}.
$$\centering
\begin{tikzcd}[column sep=0.5em]
\textcolor{blue}{\big\{\text{AL pairs}\big\}} \arrow[r, symbol=\subset, blue] \arrow[drr, blue, bend right=11, "\underline{\ker}"']& \textcolor{blue}{\big\{\text{Weak AL pairs}\big\}} \arrow[r, symbol=\subset] & \big\{\text{Transverse Liouville pairs}\big\} \arrow[r, symbol=\subset] \arrow[d, "\underline{\ker}"'] &[-10pt] \big\{\text{Liouville pairs}\big\} \arrow[d, "\underline{\ker}"']\\
				&						&	\textcolor{blue}{\big\{\text{Bi-contact structures}\big\}} \arrow[r, symbol=\subset] & \big\{\text{Positive contact pairs}\big\}
\end{tikzcd}$$

\paragraph*{Acknowledgments}
I am grateful to my PhD advisor John Pardon for his constant support and encouragement. I would like to thank Sergio Fenley, Jonathan Zung and Malo J\'{e}z\'{e}quel for insightful conversations about Anosov flows, and Surena Hozoori for multiple discussions about his work and fruitful exchanges. I am grateful to the anonymous referees for pointing out several typos and inaccuracies, as well as providing valuable suggestions.

        \section{Anosov Liouville pairs}

		  \subsection{Preliminary definitions}

If $X$ is a non-singular vector field on $M$, we write $$N_X \coloneqq TM \slash \langle X \rangle.$$
An orientation on $M$ naturally determines an orientation on the plane bundle $N_X \rightarrow M$. We denote by $\pi :TM \rightarrow N_X$ the quotient map. There is a correspondence between $n$-forms $\alpha$ on $M$ satisfying $\iota_X \alpha = 0$ and $n$-forms $\overline{\alpha}$ on $N_X$. Moreover, a vector field $Y$ on $M$ induces a section $\overline{Y}\coloneqq \pi(Y)$ on $N_X$. The operator $\mathcal{L}_X$, the Lie derivative along $X$, naturally induces an operator, still denoted by $\mathcal{L}_X$, on sections of $N_X$ and on $n$-forms on $N_X$.

\begin{defn} \label{def:bicont}
A \textbf{bi-contact structure} on an oriented $3$-manifold $M$ is a pair of co-oriented contact structures $(\xi_-, \xi_+)$ such that $\xi_-$ is negative, $\xi_+$ is positive and $\xi_-$ and $ \xi_+$ are transverse everywhere.

A non-singular flow $\Phi$ on $M$ generated by a vector field $X$ is \textbf{supported} by a bi-contact structure $(\xi_-, \xi_+)$ if $X \in \xi_- \cap \xi_+$, and the following orientation compatibility condition holds. Let $\overline{\xi}_\pm \subseteq N_X$ be the image of $\xi_\pm$ under the quotient map $\pi :TM \rightarrow N_X$. The orientations on $M$, $\xi_\pm$ and $X$ induce natural orientations on $N_X$ and $\overline{\xi}_\pm$. We require that the orientation on $N_X$ coincides with the one on $\overline{\xi}_- \oplus \overline{\xi}_+$ (see Figure~\ref{fig:bicontact} and Figure~\ref{fig:Anosovorientation}).

Similarly, $\Phi$ is supported by a (weak) Anosov Liouville pair $(\alpha_-, \alpha_+)$ if it is supported by the bi-contact structure $(\xi_-, \xi_+) = (\ker \alpha_-, \ker \alpha_+)$.
\end{defn}

Note that the definitions of bi-contact structures and (weak) Anosov Liouville pairs still make sense if $\xi_\pm$, or $\alpha_\pm$, are merely $\mathcal{C}^1$. We will always assume that bi-contact structures and (weak) Anosov Liouville pairs are smooth unless stated otherwise. Bi-contact structures and (weak) Anosov Liouville pairs obviously constitute open subsets of the space of pairs of $2$-plane fields on $M$ and the space of pairs of $1$-forms on $M$, respectively, since they are defined by open conditions.

\begin{figure}[t]
    \begin{center}
        \begin{picture}(90, 72)(0,0)
        \put(0,0){\includegraphics[width=90mm]{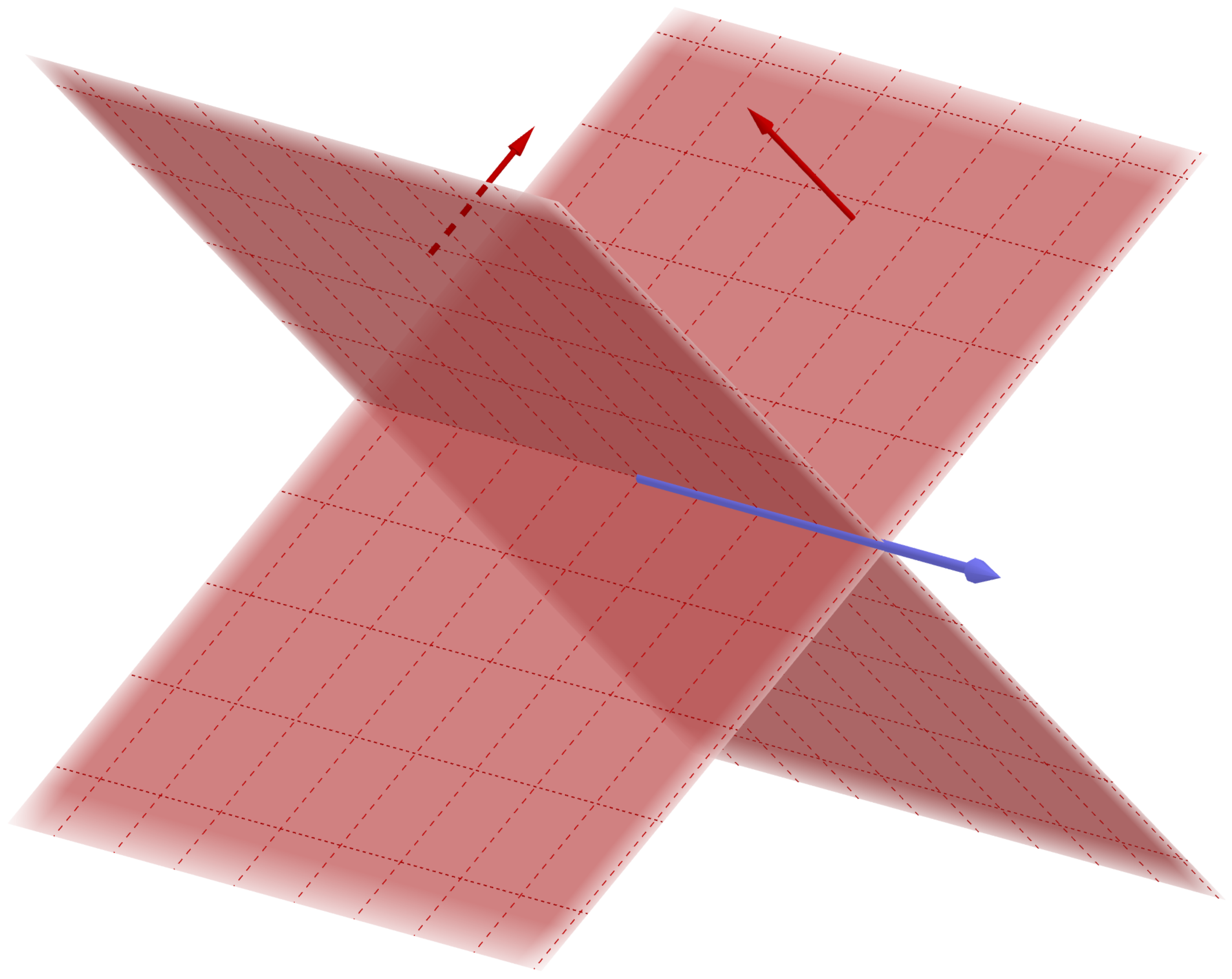}}
        \color{red1}
        \put(12,48){$\xi_-$}
        \put(83,48){$\xi_+$}
        \color{blue1}
        \put(75,28){$X$}
        \end{picture}
        \captionsetup{width=100mm}
        \caption{Coorientation convention for bi-contact structures supporting a vector field or a flow.}
    \label{fig:bicontact}
    \end{center}
\end{figure}

If $(\xi_-, \xi_+)$ is a bi-contact structure supporting a non-singular flow $\Phi = \{\phi^t\}$, then the bi-contact structure obtained from $(\xi_-, \xi_+)$ by reversing the coorientations of both $\xi_-$ and $\xi_+$ supports $\Phi$ as well. Reversing the coorientation of $\xi_-$ or $\xi_+$ only yields a bi-contact structure supporting the reversed flow $\Phi^{-1} = \{\phi^{-t}\}$.

It is easy to deduce from Theorem~\ref{thmintro:supporting} the very well-known

\begin{cor} The space of (smooth, $\mathcal{C}^1$) Anosov vector fields on $M$ is open in the $\mathcal{C}^1$ topology.
\end{cor}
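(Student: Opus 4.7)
The plan is to use Theorem~\ref{thmintro:supporting} as a stable characterization of Anosov vector fields. Given an Anosov vector field $X_0$ on $M$, the theorem furnishes an AL pair $(\alpha_-, \alpha_+)$ with $X_0 \in \ker\alpha_- \cap \ker\alpha_+$. If I can produce, for every vector field $X$ sufficiently $\mathcal{C}^1$-close to $X_0$, a new AL pair $(\tilde\alpha_-, \tilde\alpha_+)$ with $X \in \ker\tilde\alpha_- \cap \ker\tilde\alpha_+$, then the reverse direction of Theorem~\ref{thmintro:supporting} will immediately force $X$ to be Anosov. The argument then has two ingredients: a continuous ``projection'' of $\alpha_\pm$ onto forms vanishing on $X$, and openness of the AL condition in the $\mathcal{C}^1$ topology on pairs of $1$-forms.

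For the projection, I would fix once and for all a $1$-form $\beta \in \Omega^1(M)$ with $\beta(X_0) > 0$ everywhere---say, the metric dual of $X_0$ for an auxiliary Riemannian metric. For $X$ sufficiently $\mathcal{C}^0$-close to $X_0$, $\beta(X)$ is nowhere-vanishing, and I set
\[
\tilde\alpha_\pm \;:=\; \alpha_\pm \;-\; \frac{\alpha_\pm(X)}{\beta(X)}\,\beta.
\]
By construction $\tilde\alpha_\pm(X) \equiv 0$, and since $\alpha_\pm(X_0)\equiv 0$ we recover $\tilde\alpha_\pm = \alpha_\pm$ when $X = X_0$. Because $\alpha_\pm(X)$ and $\beta(X)$ depend on the $1$-jet of $X$ (with $\alpha_\pm, \beta$ fixed), the assignment $X \mapsto (\tilde\alpha_-, \tilde\alpha_+)$ is continuous from the $\mathcal{C}^1$ topology on vector fields to the $\mathcal{C}^1$ topology on pairs of $1$-forms, and $(\tilde\alpha_-, \tilde\alpha_+) \to (\alpha_-, \alpha_+)$ in $\mathcal{C}^1$ as $X \to X_0$ in $\mathcal{C}^1$.

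To close the argument I need openness of the AL condition in the $\mathcal{C}^1$ topology. Contactness of $\alpha_\pm$ is manifestly a pointwise $\mathcal{C}^1$-open condition. For the Liouville condition, a direct expansion using $ds\wedge ds = 0$ and the fact that any $4$-form on $M$ vanishes gives
\[
d\lambda \wedge d\lambda \;=\; 2\,ds \wedge \Bigl(e^{2s}\,\alpha_+\wedge d\alpha_+ \;-\; e^{-2s}\,\alpha_-\wedge d\alpha_- \;+\; \alpha_+\wedge d\alpha_- \;-\; \alpha_-\wedge d\alpha_+\Bigr),
\]
where $\lambda = e^{-s}\alpha_- + e^s\alpha_+$; the analogous identity for $-e^{-s}\alpha_- + e^s\alpha_+$ differs only in signs. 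At each $x \in M$ this is a Laurent polynomial in $e^{\pm 2s}$ whose coefficients depend only on the $1$-jets of $\alpha_\pm$ at $x$, so the requirement that it be a positive volume form for every $s \in \R$ becomes a pointwise open condition on those $1$-jets. Compactness of $M$ then upgrades this to $\mathcal{C}^1$-openness of the AL pair condition on $(\alpha_-, \alpha_+)$. Combined with the continuity of the projection map, this yields the corollary.

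I do not expect a genuine obstacle: the only mildly subtle step is verifying that the Liouville positivity condition, which a priori is a condition on the non-compact manifold $\R_s \times M$, collapses to a pointwise $\mathcal{C}^1$-open condition on $M$ via the Laurent-polynomial-in-$e^{\pm 2s}$ reduction above.
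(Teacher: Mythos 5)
Your argument is essentially the paper's own proof: the paper also fixes a $1$-form $\theta$ with $\theta(X)\equiv 1$ (your $\beta$ up to normalization), projects the given AL pair via $\alpha'_\pm \coloneqq \alpha_\pm - \frac{\alpha_\pm(X')}{\theta(X')}\theta$ for any $X'$ sufficiently $\mathcal{C}^1$-close, and invokes Theorem~\ref{thmintro:supporting} together with the $\mathcal{C}^1$-openness of the AL condition (which the paper records via Lemma~\ref{lem:charanosov}: $f_\pm>0$ and $f_0^2<4f_-f_+$). Your Laurent-polynomial reduction is just an explicit rederivation of that openness; note only that positivity for all $s$ alone is not an open condition on the $1$-jet, so the contactness inequalities $f_\pm>0$ you list separately are genuinely needed there, exactly as in Lemma~\ref{lem:charanosov}.
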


\begin{proof}
Let $X$ be an Anosov vector field on $M$ and $(\alpha_-, \alpha_+)$ be an AL pair supporting $X$. We choose a $1$-form $\theta$ such that $\theta(X) \equiv 1 $. If $X'$ is another vector field which is sufficiently $\mathcal{C}^1$-close to $X$, the pair $(\alpha'_-, \alpha'_+)$ defined by
\begin{align*}
\alpha'_\pm \coloneqq \alpha_\pm - \frac{\alpha_\pm(X')}{\theta(X')} \theta
\end{align*}
is an AL pair supporting $X'$ and by Theorem~\ref{thmintro:supporting}, $X'$ is Anosov.
\end{proof}

If $(\alpha_-, \alpha_+)$ is an AL pair on $M$ and $\sigma: M \rightarrow \R$ is a smooth function, it follows from the definition that $$\sigma \cdot (\alpha_-, \alpha_+) \coloneqq \big(e^{-\sigma} \alpha_-, e^{\sigma} \alpha_+\big)$$
is also an AL pair that defines the same bi-contact structure as $(\alpha_-, \alpha_+)$. These two AL pairs will be called \textbf{equivalent}. This defines  an action of $\mathcal{C}^\infty(M, \R)$ on the space of AL pairs.

\begin{defn}
A pair of contact forms $(\alpha_-, \alpha_+)$ on M, negative and positive, respectively, is \textbf{balanced} if $$ \alpha_+ \wedge d\alpha_+ = - \alpha_- \wedge d\alpha_-.$$
\end{defn}

In other words, $(\alpha_-, \alpha_+)$ is balanced if $\alpha_\pm$ define opposite volume forms on $M$.

\begin{lem} \label{lem:balanced}
Two equivalent AL pairs on $M$ define Liouville isomorphic Liouville structures on $\R \times M$. Any AL pair on $M$ is equivalent to a (unique) balanced one.
\end{lem}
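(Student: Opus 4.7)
The plan is to handle the two assertions separately. For the first, I would exhibit an explicit diffeomorphism of $\mathbb{R}\times M$ that pulls back the Liouville form associated to $\sigma \cdot (\alpha_-, \alpha_+)$ to the one associated to $(\alpha_-, \alpha_+)$; for the second, I would observe that rescaling by $e^{\pm \sigma}$ affects $\alpha_\pm\wedge d\alpha_\pm$ only by the conformal factor $e^{\pm 2\sigma}$, which reduces the balanced condition to a pointwise equation with a unique smooth solution.

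For the Liouville isomorphism, writing $\lambda_\sigma = e^{-s-\sigma(x)}\alpha_- + e^{s+\sigma(x)}\alpha_+$, I would simply shift in the $\mathbb{R}$-direction: define $\Psi \colon \mathbb{R}\times M \to \mathbb{R}\times M$ by $\Psi(s,x) = (s - \sigma(x), x)$. A one-line computation gives
\begin{equation*}
\Psi^* \lambda_\sigma = e^{-(s-\sigma(x))-\sigma(x)}\alpha_- + e^{(s-\sigma(x))+\sigma(x)}\alpha_+ = e^{-s}\alpha_- + e^s\alpha_+ = \lambda,
\end{equation*}
so $\Psi$ is a strict Liouville isomorphism (and the same $\Psi$ works for the pair $(-\alpha_-,\alpha_+)$, confirming that both Liouville structures in Definition~\ref{def:anoliouv} are preserved).

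For the existence of a balanced representative, I would expand $d(e^{\pm \sigma}\alpha_\pm) = e^{\pm\sigma}(\pm d\sigma\wedge\alpha_\pm + d\alpha_\pm)$ and use that $\alpha_\pm\wedge d\sigma\wedge\alpha_\pm = 0$ to get
\begin{equation*}
(e^{\sigma}\alpha_+)\wedge d(e^{\sigma}\alpha_+) = e^{2\sigma}\,\alpha_+\wedge d\alpha_+, \qquad (e^{-\sigma}\alpha_-)\wedge d(e^{-\sigma}\alpha_-) = e^{-2\sigma}\,\alpha_-\wedge d\alpha_-.
\end{equation*}
The balanced condition for $\sigma\cdot(\alpha_-,\alpha_+)$ becomes $e^{4\sigma}\,\alpha_+\wedge d\alpha_+ = -\alpha_-\wedge d\alpha_-$. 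Because $\alpha_+$ is positive and $\alpha_-$ is negative, both $\alpha_+\wedge d\alpha_+$ and $-\alpha_-\wedge d\alpha_-$ are positive volume forms on $M$, so there is a unique positive smooth function $f$ with $-\alpha_-\wedge d\alpha_- = f\,(\alpha_+\wedge d\alpha_+)$, and $\sigma = \tfrac{1}{4}\log f$ is the unique solution. Uniqueness of $\sigma$ then gives uniqueness of the balanced representative in the equivalence class.

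There is no real obstacle here: the main observation is that the diffeomorphism $\Psi$ absorbs the conformal factor exactly, and that the $d\sigma$ cross-terms drop out for degree reasons, so the balanced equation is pointwise algebraic in $e^{4\sigma}$.
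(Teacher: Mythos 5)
Your proposal is correct and matches the paper's proof essentially verbatim: the paper uses the same shift diffeomorphism $\Psi(s,x) = (s-\sigma(x),x)$ with $\Psi^*\lambda' = \lambda$, and obtains the balanced representative from the same pointwise equation, writing $\alpha_-\wedge d\alpha_- = -f\,\alpha_+\wedge d\alpha_+$ and solving $\sigma = \tfrac{1}{4}\ln f$. Your version just spells out the conformal-factor computation in slightly more detail.
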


\begin{proof} Let $(\alpha_-, \alpha_+)$ be an AL pair on $M$ and $\lambda \coloneqq  e^{-s} \alpha_- + e^s \alpha_+$ be the corresponding Liouville form. If $\sigma \in \mathcal{C}^\infty(M, \R)$ and $\lambda' \coloneqq e^{-(s+\sigma)} \alpha_- + e^{s+ \sigma} \alpha_+$, the diffeomorphism 
$$\begin{array}{rccc}
\Psi :  & \R \times M & \longrightarrow & \R \times M \\
	 & (s,x) & \longmapsto & (s - \sigma(x), x)
\end{array}$$
satisfies $\Psi^* \lambda' = \lambda$. Moreover, if $f: M \rightarrow \R_{>0}$ is such that $$ \alpha_- \wedge d\alpha_- = -f\, \alpha_+ \wedge d\alpha_+,$$ then $\sigma \cdot ( \alpha_-,  \alpha_+)$ is balanced if and only if $\sigma = \frac{1}{4} \ln f$.
\end{proof}

As a straightforward application of Gray's stability theorem and the above lemma, we have the following

\begin{lem} Let $(\alpha_-, \alpha_+)$ be an AL pair and let $\xi_+ \coloneqq \ker \alpha_+$. If $\xi'_+ = \ker \alpha'_+$ is a contact structure homotopic to $\xi_+$, then there exists a path of AL pairs $(\alpha^t_-, \alpha^t_+)$, $t \in [0,1]$, such that $(\alpha^0_-, \alpha^0_+) = (\alpha_-, \alpha_+)$ and $\alpha^1_+ = \alpha'_+$.
\end{lem}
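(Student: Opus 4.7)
The plan is to combine Gray's stability theorem with the equivalence action of $\mathcal{C}^\infty(M, \R)$ on AL pairs defined just above the statement.

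First, I would fix a smooth homotopy $(\xi^t_+)_{t \in [0,1]}$ of positive contact structures from $\xi^0_+ = \xi_+$ to $\xi^1_+ = \xi'_+$ and apply Gray's stability to produce a smooth ambient isotopy $(\phi_t)_{t \in [0,1]}$ of $M$ with $\phi_0 = \mathrm{id}_M$ and $(\phi_t)_* \xi_+ = \xi^t_+$. Since $\phi_t$ is isotopic to the identity, it is orientation-preserving, and the induced diffeomorphism $\mathrm{id}_\R \times \phi_t$ of $\R \times M$ is orientation-preserving as well.

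Next, I would push the original AL pair through this isotopy and set
$$(\beta^t_-, \beta^t_+) \coloneqq \bigl( (\phi_t)_* \alpha_-,\ (\phi_t)_* \alpha_+ \bigr)$$
for $t \in [0,1]$. This is a continuous path of AL pairs: the diffeomorphism $\mathrm{id}_\R \times \phi_t$ sends the two Liouville forms $\pm e^{-s}\alpha_- + e^s\alpha_+$ to $\pm e^{-s}\beta^t_- + e^s\beta^t_+$, and being a positively oriented Liouville form on $\R \times M$ is tautologically preserved under orientation-preserving diffeomorphisms, so both sign choices remain Liouville. At $t=1$ the endpoint $\beta^1_+$ is a positive contact form for $\xi'_+$, hence $\beta^1_+ = h\,\alpha'_+$ for a unique positive smooth function $h : M \to \R$.

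The final step is to absorb the factor $h$ using the equivalence action. Setting $\sigma \coloneqq -\ln h$, a direct computation gives
$$\sigma \cdot (\beta^1_-, \beta^1_+) = \bigl( e^{-\sigma} \beta^1_-,\ e^{\sigma}\beta^1_+ \bigr) = \bigl( h\,\beta^1_-,\ \alpha'_+ \bigr),$$
which is an AL pair with $+$-component exactly $\alpha'_+$. I would then concatenate the Gray-stability phase on $[0, 1/2]$ with the linear interpolation $\tau \mapsto (\tau\sigma) \cdot (\beta^1_-, \beta^1_+)$ on $[1/2, 1]$, noting that at the junction $\tau=0$ this interpolation starts precisely at $(\beta^1_-, \beta^1_+)$, so the concatenation is continuous and yields the required path of AL pairs.

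I do not anticipate any serious obstacle: pushforward by diffeomorphisms and the equivalence action both manifestly preserve the AL condition, so the argument reduces to the two formal moves above together with a continuous concatenation, which is immediate.
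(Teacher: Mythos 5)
Your proposal is correct and follows essentially the same route the paper intends, namely Gray's stability theorem combined with the equivalence action $\sigma \cdot (\alpha_-, \alpha_+) = (e^{-\sigma}\alpha_-, e^{\sigma}\alpha_+)$ introduced just before the lemma (the paper leaves the proof as a one-line remark). The only point to tighten is the positivity of $h$: since $(h\alpha'_+) \wedge d(h\alpha'_+) = h^2\, \alpha'_+ \wedge d\alpha'_+$, positivity of the contact form alone does not determine the sign of $h$; rather, $h > 0$ because the homotopy is through co-oriented contact structures and the Gray isotopy matches the coorientations of $(\phi_1)_*\xi_+$ and $\xi'_+$.
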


\begin{defn} \label{def:closed} A pair of contact forms $(\alpha_-, \alpha_+)$ on M, negative and positive, respectively, is \textbf{closed} if $\alpha_- \wedge \alpha_+$ is a closed $2$-form.
\end{defn}

It is straightforward to check that a closed pair $(\alpha_-, \alpha_+)$ is an AL pair (see also Lemma~\ref{lem:charanosov} below). As we will see in Proposition~\ref{prop:volume}, closed AL pairs are in correspondence with volume preserving Anosov flows.

		\subsection{Elementary properties of Anosov Liouville pairs} \label{section:elem}
		
The notion of Anosov Liouville pair can be conveniently characterized in the following way, which only involves the forms and their exterior differentials.

\begin{lem} \label{lem:charanosov}
Let $(\alpha_-, \alpha_+)$ be a pair of $1$-forms on $M$. We  write
\begin{align*}
\alpha_+\wedge d \alpha_+ &= f_+ \, \mathrm{dvol}, \\
\alpha_- \wedge d \alpha_- &= - f_- \, \mathrm{dvol}, \\
d(\alpha_- \wedge \alpha_+) &= f_0 \ \mathrm{dvol},
\end{align*}
where $\mathrm{dvol}$ is any volume form on $M$ and $f_\pm, f_0 : M \rightarrow \R$ are smooth functions. Then $(\alpha_-, \alpha_+)$ is an AL pair if and only if $f_\pm > 0$, and
\begin{align} \label{ineqfexp}
f_0^2 < 4 f_- f_+.
\end{align}
\end{lem}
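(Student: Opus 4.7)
The plan is to reduce the AL pair condition to a pointwise algebraic inequality on $M$ via a direct computation of $d\lambda \wedge d\lambda$ for $\lambda = e^{-s}\alpha_- + e^s\alpha_+$.

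Splitting $d\lambda = ds \wedge (-e^{-s}\alpha_- + e^s\alpha_+) + (e^{-s}d\alpha_- + e^s d\alpha_+)$, the second summand squares to zero since it is pulled back from the $3$-manifold $M$, so only the cross term contributes to $d\lambda \wedge d\lambda$. Expanding the resulting product on $M$ and using the Leibniz identity $d(\alpha_-\wedge\alpha_+) = \alpha_+ \wedge d\alpha_- - \alpha_- \wedge d\alpha_+$ together with the definitions of $f_\pm, f_0$, a short computation should yield
\begin{align*}
d\lambda \wedge d\lambda = 2\, ds \wedge \bigl(e^{-2s} f_- + f_0 + e^{2s} f_+\bigr)\, \mathrm{dvol}.
\end{align*}
Positivity of this form on $\R \times M$ is then equivalent to $e^{-2s}f_- + f_0 + e^{2s}f_+ > 0$ everywhere. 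Replacing $\alpha_-$ by $-\alpha_-$ leaves $f_\pm$ unchanged (they depend quadratically on $\alpha_\pm$) but flips the sign of $f_0$, so the Liouville condition for $(-\alpha_-, \alpha_+)$ becomes the twin inequality $e^{-2s}f_- - f_0 + e^{2s}f_+ > 0$.

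Setting $u = e^{2s} > 0$ and multiplying through by $u$, the two conditions together assert that at each point of $M$ the quadratic $P(u) = f_+ u^2 - |f_0|\, u + f_-$ is positive on $(0,\infty)$. Sending $u \to 0$ and $u \to \infty$ forces $f_\pm \geq 0$, and strict positivity follows from the open nature of the inequality. Given $f_\pm > 0$, any real roots of $P$ have positive product $f_-/f_+$ and nonnegative sum $|f_0|/f_+$, so both must lie in $[0,\infty)$; hence $P > 0$ on $(0,\infty)$ is equivalent to $P$ having no real roots, i.e., to the discriminant inequality $f_0^2 < 4 f_- f_+$. This gives the claimed characterization.

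The only delicate step is the sign bookkeeping in the wedge expansion, particularly in identifying $\alpha_+ \wedge d\alpha_- - \alpha_- \wedge d\alpha_+$ with $d(\alpha_- \wedge \alpha_+) = f_0\, \mathrm{dvol}$. Beyond that, the reduction of the two $s$-dependent inequalities to the single discriminant condition is purely formal, and I do not anticipate any genuine conceptual obstacle.
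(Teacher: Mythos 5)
Your computation of $d\lambda \wedge d\lambda$ is correct, as is the reduction of the two positivity conditions to the pointwise statement that $P(u) = f_+u^2 - |f_0|u + f_-$ is positive on $(0,\infty)$, and the discriminant analysis \emph{once $f_\pm > 0$ is known}. The step that fails as written is the derivation of $f_\pm > 0$ in the forward direction: sending $u \to 0$ and $u \to \infty$ only yields $f_\pm \geq 0$, and strict positivity does \emph{not} ``follow from the open nature of the inequality''. Indeed, the condition that $P(u) > 0$ for all $u > 0$ is pointwise compatible with $f_+$ vanishing: at a point where $f_+ = 0$, $f_0 = 0$ and $f_- > 0$, one has $P(u) = f_- > 0$ for every $u > 0$, so both positivity conditions on $d\lambda \wedge d\lambda$ hold there even though $\alpha_+$ fails to be a contact form at that point. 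So positivity of the two Liouville volume forms alone does not force $f_\pm > 0$. What rescues the statement is that, by Definition~\ref{def:anoliouv} and the definition of a Liouville pair, an AL pair is by definition a pair of \emph{contact} forms, so $f_\pm$ are nowhere zero; combined with $f_\pm \geq 0$ from your limiting argument this gives $f_\pm > 0$, and the rest of your argument then goes through. Equivalently, this is exactly what the criterion of \cite[Lemma 9.4]{MNW} invoked in the paper supplies: there the inequality $(C_+\alpha_+ - C_-\alpha_-)\wedge(C_+ d\alpha_+ + C_- d\alpha_-) > 0$ is required for \emph{all} $C_\pm \geq 0$ not both zero, and the boundary rays $C_- = 0$ and $C_+ = 0$ are precisely the contact conditions $f_+ > 0$ and $f_- > 0$, which your open-ray condition $u \in (0,\infty)$ misses.

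Apart from this point, your route is essentially the paper's: the paper outsources your explicit computation of $d\lambda \wedge d\lambda$ to the cited lemma of \cite{MNW} and then runs the same quadratic argument (phrased for all real $x$ rather than $u > 0$, which is equivalent after combining the two conditions), ending with the same discriminant inequality~\eqref{ineqfexp}. One further small remark: the statement takes $\mathrm{dvol}$ to be a volume form compatible with the orientation of $M$, and your identification of positivity of $d\lambda \wedge d\lambda$ with positivity of $e^{-2s}f_- + f_0 + e^{2s}f_+$ implicitly uses $ds \wedge \mathrm{dvol} > 0$; it is worth saying so explicitly, since flipping the sign of $\mathrm{dvol}$ flips the signs of $f_\pm$ and $f_0$.
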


\begin{proof}
Following~\cite[Lemma 9.4]{MNW}, $(\alpha_-, \alpha_+)$ is a Liouville pair if and only if for all constants $C_-, C_+ \geq 0$ with $(C_-,C_+) \neq (0,0)$, $$(C_+ \alpha_+ - C_- \alpha_-)\wedge (C_+ d\alpha_+ + C_- d\alpha_-) > 0,$$
which is equivalent to $$C_+^2 f_+ + C_- C_+ f_0 + C_-^2 f_- > 0.$$ Applying this fact to $(\alpha_-, \alpha_+)$ and $(- \alpha_-, \alpha_+)$, we obtain that $(\alpha_-, \alpha_+)$ is an AL pair if and only if $f_\pm > 0$ and for every $x \in \R$, $$x^2 f_+ + x f_0 + f_- > 0,$$ which is equivalent to~\eqref{ineqfexp} by the quadratic formula.
\end{proof}

\begin{rem} \label{rem:liouv}
The proof also shows that $(\alpha_-, \alpha_+)$ is a Liouville pair if and only if $f_\pm > 0$ and $- f_0 < 2 \sqrt{f_- f_+}$.
\end{rem}

We now use this criterion to show some natural  geometric properties of Anosov Liouville pairs.

\begin{prop} \label{prop:anoliouv}
Let $(\alpha_-, \alpha_+)$ be an Anosov Liouville pair. Then it defines a bi-contact structure $(\xi_-,\xi_+) = (\ker \alpha_-, \ker \alpha_+)$. Moreover, if $X \in \xi_- \cap \xi_+$ is a nowhere vanishing vector field and $R_\pm$ is the Reeb vector field of $\alpha_\pm$, then $\{X, R_-, R_+\}$ is a basis at every point of $M$.
\end{prop}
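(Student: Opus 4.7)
The plan is to derive everything from Lemma~\ref{lem:charanosov}, which characterizes an AL pair by the pointwise inequalities $f_\pm > 0$ and $f_0^2 < 4 f_- f_+$. The positivity of $f_\pm$ immediately gives that $\xi_\pm \coloneqq \ker\alpha_\pm$ are contact with the correct orientations ($\xi_+$ positive, $\xi_-$ negative). What will remain is (i) transversality of $\xi_\pm$ everywhere, and (ii) the basis property for $\{X, R_-, R_+\}$.

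For (i), I would argue by contradiction. Suppose $\xi_-(p) = \xi_+(p)$ at some point $p$, and choose a basis $(e_1, e_2, e_3)$ of $T_pM$ whose first two vectors span this common $2$-plane. Then $\alpha_-|_p = \beta e^3$ and $\alpha_+|_p = \gamma e^3$ for some $\beta, \gamma \neq 0$. Evaluating the $3$-forms $\alpha_\pm \wedge d\alpha_\pm$ and $d(\alpha_- \wedge \alpha_+)$ on the triple $(e_1, e_2, e_3)$ (with $\mathrm{dvol} = e^1 \wedge e^2 \wedge e^3$) would yield $f_0(p) = -(\gamma/\beta) f_-(p) - (\beta/\gamma) f_+(p)$. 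The two summands on the right have product $f_-(p) f_+(p) > 0$ and hence the same sign, so AM--GM would give $f_0(p)^2 \geq 4 f_-(p) f_+(p)$, contradicting the AL condition.

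For (ii), transversality implies that the $2$-form $\alpha_- \wedge \alpha_+$ has kernel exactly $\langle X(p) \rangle$ at each $p$ and therefore descends to a non-degenerate $2$-form on $T_pM / \langle X(p) \rangle$. Since $\alpha_\pm(R_\pm) = 1$, the vectors $R_\pm$ are not multiples of $X$, so $\{X, R_-, R_+\}$ will be a basis of $T_pM$ if and only if $(\alpha_- \wedge \alpha_+)(R_-, R_+) \neq 0$. Expanding the wedge product gives $(\alpha_- \wedge \alpha_+)(R_-, R_+) = 1 - \alpha_-(R_+) \alpha_+(R_-)$, so it would suffice to show $\alpha_-(R_+) \alpha_+(R_-) \neq 1$. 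Choosing a basis $(X(p), Y_-, Y_+)$ of $T_pM$ with $Y_\pm \in \xi_\pm(p)$, normalized so that $\alpha_-(Y_+) = \alpha_+(Y_-) = 1$, the Reeb equations $\alpha_\pm(R_\pm) = 1$ and $d\alpha_\pm(R_\pm, \cdot) = 0$ determine $R_\pm$ explicitly. A direct computation then yields $\alpha_-(R_+) \alpha_+(R_-) = PQ / (f_- f_+)$, where $P \coloneqq d\alpha_-(X, Y_+)$ and $Q \coloneqq d\alpha_+(X, Y_-)$ satisfy $P + Q = -f_0$. The AL condition $(P + Q)^2 < 4 f_- f_+$ forces $PQ < f_- f_+$: trivially if $PQ \leq 0$, and by $(P + Q)^2 \geq 4 PQ$ (AM--GM) otherwise. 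Hence $\alpha_-(R_+) \alpha_+(R_-) < 1$, concluding the proof.

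The main obstacle will be the bookkeeping in step (ii): setting up the adapted basis, identifying the correct components $P$ and $Q$, and verifying the identity $P + Q = -f_0$. The common algebraic engine behind both (i) and (ii) is the elementary observation that $(A + B)^2 < 4 C$ with $C > 0$ forces $AB < C$, which, modulo Lemma~\ref{lem:charanosov}, is essentially the entire content of the proposition.
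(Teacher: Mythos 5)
Your proposal is correct, and its overall engine is the same as the paper's: everything is reduced to the pointwise characterization of Lemma~\ref{lem:charanosov} plus the elementary inequality $(A+B)^2 \geq 4AB$. For transversality your argument is essentially the paper's in different packaging: the paper evaluates on the basis $(X,Y,R_+)$ and exhibits $f_0^2 - 4f_-f_+$ as a perfect square, while you write $f_0 = -(\gamma/\beta)f_- - (\beta/\gamma)f_+$ in a generic adapted frame and invoke AM--GM; these are the same computation. For the basis property the route differs a bit: the paper argues by contradiction from $\mathrm{dvol}(X,R_-,R_+)=0$, extracting $d\alpha_-(X,R_+) = d\alpha_+(X,R_-) = 0$ and the relations $f_+ = g_+/\alpha_-(R_+)$, $f_- = -\alpha_-(R_+)g_-$, and then lands on $(g_-+g_+)^2 \geq 0$; you instead compute directly in the frame $(X,Y_-,Y_+)$, solve for $R_\pm$, and show the quantitative statement $\alpha_-(R_+)\alpha_+(R_-) = PQ/(f_-f_+) < 1$ with $P+Q=-f_0$ (I checked the bookkeeping: with $\mathrm{dvol}(X,Y_-,Y_+)=1$ one gets $d\alpha_-(X,Y_-) = -f_-$, $d\alpha_+(X,Y_+) = -f_+$, and $f_0 = -P-Q$, so your identities hold; note also that the conclusion is insensitive to the orientation of the chosen frame since only $f_-f_+$ and $f_0^2$ enter). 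Your version buys a strict inequality, i.e.\ $(\alpha_-\wedge\alpha_+)(R_-,R_+) = 1 - \alpha_-(R_+)\alpha_+(R_-) > 0$ rather than merely $\neq 0$, which dovetails with the paper's later identities $g_+ = \alpha_-(R_+)f_+$, $g_- = -\alpha_+(R_-)f_-$ and the Reeb characterization of Proposition~\ref{prop:reebvf}; the paper's contradiction argument is marginally shorter but yields only non-degeneracy.
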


\begin{proof}
We first show that $\xi_-$ and $\xi_+$ intersect transversally everywhere. Assume by contradiction that there exist a point $x \in M$ and two linearly independent vectors $X, Y \in T_xM$ such that $\alpha_\pm(X) = \alpha_\pm(Y) = 0$. In what follows, all the quantities will be implicitly evaluated at this point $x$. We can assume without loss of generality that $d \alpha_+(X,Y) >0$ and $\mathrm{dvol}(X,Y,R_+)=1$. We compute
\begin{align*}
\alpha_+ \wedge d\alpha_+ (X,Y,R_+) &= d\alpha_+(X,Y) = f_+, \\
\alpha_- \wedge d\alpha_- (X,Y,R_+) &= \alpha_-(R_+) d\alpha_-(X,Y) = -f_-,\\
\alpha_- \wedge d\alpha_+ (X,Y,R_+) &= \alpha_-(R_+) d\alpha_+(X,Y),\\
\alpha_+ \wedge d\alpha_- (X,Y,R_+) &= d\alpha_-(X,Y),
\end{align*}
hence
\begin{align*}
f_0^2 - 4f_-f_+ &= 
\begin{multlined}[t] (d\alpha_-(X,Y))^2 - 2 \alpha_-(R_+) d\alpha_-(X,Y) d\alpha_+(X,Y) \\+ \alpha_-(R_+)^2 (d\alpha_+(X,Y))^2  
+4 \alpha_-(R_+) d\alpha_-(X,Y) d\alpha_+(X,Y)
\end{multlined}\\
&= \left(d\alpha_-(X,Y) + \alpha_-(R_+) d\alpha_+(X,Y) \right)^2\\
&\geq 0,
\end{align*}
contradicting~\eqref{ineqfexp}.

For the second part, we write
\begin{align*}
\alpha_- \wedge d\alpha_+ = g_+ \, \mathrm{dvol}, \\
\alpha_+ \wedge d\alpha_- = g_- \, \mathrm{dvol},
\end{align*}
where $g_\pm : M \rightarrow \R$ are smooth functions (note that $f_0 = g_- - g_+$) and we compute
\begin{align*}
\alpha_+ \wedge d\alpha_+ (X,R_+,\cdot\, ) &= -d\alpha_+(X,\cdot\, ) = f_+\, \mathrm{dvol}(X,R_+, \cdot\, ), \\
\alpha_- \wedge d\alpha_- (X,R_+, \cdot\, ) &= -\alpha_-(R_+) d\alpha_-(X,\cdot\, ) + d\alpha_-(X, R_+)\alpha_-  = -f_-\, \mathrm{dvol}(X,R_+, \cdot\, ),\\
\alpha_- \wedge d\alpha_+ (X,R_+, \cdot\, ) &= - \alpha_-(R_+) d\alpha_+(X, \cdot\, ) = g_+\,  \mathrm{dvol}(X, R_+, \cdot\, ),\\
\alpha_+ \wedge d\alpha_- (X, R_+, \cdot\, ) &=  - d\alpha_-(X,\cdot\, ) + d\alpha_-(X, R_+) \alpha_+  = g_-\, \mathrm{dvol}(X, R_+, \cdot\, ).
\end{align*}
Let us assume that $\mathrm{dvol}(X, R_-, R_+)=0$ at a point $x \in M$. In what follows, all the quantities will be implicitly evaluated at this point $x$. Plugging in $R_-$ in the first two of the four equations above yields $$d\alpha_-(X,R_+) = d\alpha_+(X, R_-) = 0.$$
Note that $X$ and $R_+$ are not colinear since $\alpha_+(X)=0$ and $\alpha_+(R_+)=1$. The last two of the four equations above imply $\alpha_-(R_+) \neq 0$ and 
\begin{align*}
f_+ &= \frac{1}{\alpha_-(R_+)} g_+, \\
f_- &= -\alpha_-(R_+) g_-.
\end{align*}
Finally, 
\begin{align*}
f_0^2 - 4 f_- f_+ &= \big(g_- - g_+ \big)^2 + 4 g_-g_+ \\
&= \big(g_- + g_+\big)^2 \\
&\geq 0,
\end{align*}
contradicting~\eqref{ineqfexp}.
\end{proof}

\begin{rem} \label{rem:positiv}
A (non-Anosov) Liouville pair may not define a bi-contact structure, namely $\xi_-=\ker \alpha_-$ and $\xi_+ = \ker \alpha_+$ may not be transverse everywhere. Nevertheless, the first part of the proof can easily be adapted to show that at a point where $\xi_-$ and $\xi_+$ coincide, their orientations coincide (and their coorientations are opposite). In the terminology of~\cite{CS11}, $(\xi_-, \xi_+)$ is a \emph{positive pair} of contact structures. After a generic perturbation of $\alpha_-$ and/or $\alpha_+$, the singular set $\Delta \coloneqq \{ x \in M : \xi_-(x) = \xi_+(x)\}$ is a smoothly embedded link in $M$. Moreover, it can be shown that $f_0 > 0$ along $\Delta$, so the Liouville condition of Remark~\ref{rem:liouv} is largely satisfied. We refer to our forthcoming article~\cite{Mas} for detailed proofs of these facts and a thorough investigation of general Liouville pairs.
\end{rem}

For any AL pair $(\alpha_-, \alpha_+)$, if $X$ (or $\mathrm{dvol}$) is chosen so that $\mathrm{dvol}(X,R_-,R_+) = 1$, then
\begin{align*}
f_+ &= d\alpha_+(X, R_-) = \mathcal{L}_X \alpha_+ (R_-), \\
f_- &= d\alpha_-(X, R_+) = \mathcal{L}_X \alpha_- (R_+),\\
g_+ &= \alpha_-(R_+) f_+, \\
g_- &= - \alpha_+(R_-) f_-.
\end{align*}
Moreover, if $(\alpha_-, \alpha_+)$ is balanced, i.e., if $f_+ = f_-$, the condition~\eqref{ineqfexp} becomes
\begin{align} \label{eqReebexp}
|\alpha_-(R_+) + \alpha_+(R_-)| < 2.
\end{align}
In fact, (balanced) AL pairs can be completely characterized by their Reeb vector fields.

\begin{prop} \label{prop:reebvf}
Let $(\alpha_-, \alpha_+)$ be a pair of contact forms on $M$, negative and positive, respectively, and assume that it is balanced. Then it is an AL pair if and only if~\eqref{eqReebexp} is satisfied.
\end{prop}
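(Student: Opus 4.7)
The plan is to reduce the equivalence to Lemma~\ref{lem:charanosov}. Since $\alpha_-$ is a negative and $\alpha_+$ a positive contact form, $f_\pm > 0$ everywhere, so the AL pair condition is equivalent to the pointwise inequality $f_0^2 < 4 f_- f_+$, and I will show that under the balanced hypothesis this is precisely $|\alpha_-(R_+) + \alpha_+(R_-)| < 2$.

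The key step is to establish, at every point of $M$ and without any transversality hypothesis on $\xi_\pm$, the two intrinsic identities
\[ g_+ = \alpha_-(R_+)\, f_+, \qquad g_- = -\alpha_+(R_-)\, f_-. \]
For the first, set $\beta \coloneqq \alpha_- - \alpha_-(R_+)\,\alpha_+$, so that $\iota_{R_+}\beta = 0$. Since also $\iota_{R_+} d\alpha_+ = 0$, both $\beta$ and $d\alpha_+$ are pullbacks of forms on the $2$-dimensional quotient $T_xM/\langle R_+\rangle$, so their wedge vanishes in $\Lambda^3(T_xM/\langle R_+\rangle)^* = 0$. Pulling back, this gives $\alpha_-\wedge d\alpha_+ = \alpha_-(R_+)\,\alpha_+\wedge d\alpha_+$, i.e.\ $g_+ = \alpha_-(R_+) f_+$; the second identity follows symmetrically from $\alpha_+ - \alpha_+(R_-)\alpha_-$.

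Using $f_0 = g_- - g_+$ and setting $f \coloneqq f_+ = f_-$ (from the balanced hypothesis), one obtains $f_0 = -\bigl(\alpha_-(R_+)+\alpha_+(R_-)\bigr) f$. Hence $f_0^2 < 4 f_- f_+$ reads $(\alpha_-(R_+) + \alpha_+(R_-))^2 f^2 < 4 f^2$, which is equivalent to $|\alpha_-(R_+) + \alpha_+(R_-)|<2$ since $f>0$, and Lemma~\ref{lem:charanosov} then concludes in both directions. The mild obstacle is the pointwise derivation just explained: the identities $g_\pm = \mp\alpha_\mp(R_\pm)f_\pm$ appear just above the proposition but are obtained there under the normalization $\mathrm{dvol}(X,R_-,R_+)=1$, which tacitly invokes the basis property of Proposition~\ref{prop:anoliouv}; for the ``if'' direction one needs them without that input, and the Reeb-contraction argument above supplies exactly this.
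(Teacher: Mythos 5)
Your proof is correct, and it takes a genuinely different route from the paper's. The paper handles the ``if'' direction by first re-proving, under the hypothesis~\eqref{eqReebexp}, the conclusions of Proposition~\ref{prop:anoliouv}: it assumes a tangency of $\xi_-$ and $\xi_+$ (resp.\ a degeneracy $\mathrm{dvol}(X,R_-,R_+)=0$) at a point and derives $\alpha_-(R_+)\,\alpha_+(R_-)=1$ there, which contradicts~\eqref{eqReebexp} by the arithmetic--geometric mean inequality; only after transversality and the basis property are secured does it invoke the identities $g_+=\alpha_-(R_+)f_+$ and $g_-=-\alpha_+(R_-)f_-$ (obtained via the normalization $\mathrm{dvol}(X,R_-,R_+)=1$) and conclude with Lemma~\ref{lem:charanosov}. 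You instead observe that these two identities hold pointwise for \emph{any} pair of contact forms, with no transversality or basis input: since $\alpha_--\alpha_-(R_+)\alpha_+$ and $d\alpha_+$ both annihilate $R_+$, their wedge descends to the two-dimensional quotient $T_xM/\langle R_+\rangle$ and hence vanishes, and symmetrically for the other identity. This collapses both implications into the single equivalence $f_0^2<4f_-f_+\iff\eqref{eqReebexp}$ under the balanced hypothesis, correctly filling the gap you point out (the paper's derivation of the identities above the proposition does presuppose the basis property). What each approach buys: yours is shorter, symmetric in the two directions, and the unconditional identities would equally streamline Proposition~\ref{prop:reebvflin}, where the paper again routes through the computations of Proposition~\ref{prop:anoliouv}; the paper's argument, on the other hand, shows directly that~\eqref{eqReebexp} forces transversality of the kernels and the basis property of $\{X,R_-,R_+\}$, geometric facts you only recover a posteriori from Proposition~\ref{prop:anoliouv} once the AL property is established.
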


\begin{proof}
We only have to show that under these hypothesis, the conclusions of Proposition~\ref{prop:anoliouv} are satisfied, since these imply that $g_+ = \alpha_-(R_+) f_+$ and $g_- = - \alpha_+(R_-) f_-$ and Lemma~\ref{lem:charanosov} concludes the proof.

Assume first that $\xi_-$ and $\xi_+$ are not transverse at a point $x\in M$. With the same notations as in the proof of Proposition~\ref{prop:anoliouv}, similar computations show that at this point, 
\begin{align*}
\alpha_+ \wedge d\alpha_+ (X,Y,R_+) &= d\alpha_+(X,Y) = f_+, \\
\alpha_- \wedge d\alpha_- (X,Y,R_+) &= \alpha_-(R_+) d\alpha_-(X,Y) = -f_-,\\
\alpha_+ \wedge d\alpha_+(X,Y,R_-) &= \alpha_+(R_-) d\alpha_+(X,Y) = f_+ \, \mathrm{dvol}(X,Y,R_-), \\
\alpha_- \wedge d\alpha_-(X,Y,R_-) &= d\alpha_-(X,Y) = -f_- \, \mathrm{dvol}(X,Y,R_-),
\end{align*}
hence by the first and third equalities,
$$\mathrm{dvol}(X,Y,R_-) = \alpha_+(R_-),$$ 
and by the second and fourth equalities,
$$\alpha_-(R_+) \alpha_+(R_-) = 1,$$
contradicting~\eqref{eqReebexp} by the inequality of arithmetic and geometric means.

Assuming now that $\mathrm{dvol}(X,R_-, R_+)=0$ at a point $x \in M$, the proof of Proposition~\ref{prop:anoliouv} showed that at this point, $$d\alpha_-(X,R_+) = d\alpha_+(X, R_-) = 0,$$
and $$g_+ =\alpha_-(R_+) f_+.$$
Similarly,
\begin{align*}
\alpha_+ \wedge d\alpha_+ (X,R_-,\cdot\, ) &= -\alpha_+(R_-) d\alpha_+(X,\cdot\, ) = f_+\, \mathrm{dvol}(X,R_-, \cdot\, ), \\
\alpha_- \wedge d\alpha_+ (X,R_-, \cdot\, ) &= - d\alpha_+(X, \cdot\, ) = g_+\,  \mathrm{dvol}(X, R_-, \cdot\, ),
\end{align*}
hence $$f_+ = \alpha_+(R_-) g_+.$$
Once again, we obtain that $$\alpha_-(R_+) \alpha_+(R_-) = 1,$$ contradicting~\eqref{eqReebexp}.
\end{proof}

    \section{From Anosov flows to Anosov Liouville pairs and back}
    
    In this section, we adapt the proof of~\cite[Theorem 1.1]{H22a} to the setting of Anosov Liouville pairs as defined in the Introduction.
    
    	\subsection{Anosov and projectively Anosov flows} \label{sec:anosov}

We recall the definitions of Anosov and projectively Anosov flows with an emphasis on our orientation conventions, and recast them in terms of the existence of suitable $1$-forms.

\begin{defn} \label{definition:defanosov}
Let $\Phi = \{\phi^t\}_{t \in \R}$ be a flow on $M$ generated by a non-singular $\mathcal{C}^1$ vector field $X$.
\begin{itemize}
\item $\Phi$ is \textbf{Anosov} if there exists a continuous invariant \textbf{hyperbolic splitting}
\begin{align} \label{anosovsplit}
TM = \langle X \rangle \oplus E^s \oplus E^u
\end{align}
where $E^s, E^u$ are $1$-dimensional bundles such that for some (any) Riemannian metric $g$ on $M$, there exist constants $C, a >0$ such that for all $v \in E^s$ and $t \geq 0$, $$\Vert d\phi^t(v)\Vert \leq C e^{-at} \Vert v \Vert,$$
and for all $v \in E^u$ and $t \geq 0$, $$\Vert d\phi^t(v)\Vert \geq C e^{at} \Vert v \Vert.$$
$E^s$ and $E^u$ are called the (strong) stable and unstable bundles of $\Phi$, respectively.
\item $\Phi$ is \textbf{projectively Anosov} if there exists a continuous invariant splitting
\begin{align} \label{projanosovsplit}
TM \slash \langle X \rangle = N_X = \overline{E}^s \oplus \overline{E}^u
\end{align}
where $\overline{E}^s, \overline{E}^u$ are $1$-dimensional bundles such that for some (any) Riemannian metric $\overline{g}$ on $N_X$, there exist constants $C, a >0$ such that for all unit vectors $v_s \in \overline{E}^s, v_u \in \overline{E}^u$, and $t \geq 0$, $$ \Vert d\phi^t(v_u)\Vert \geq C e^{at} \, \Vert d\phi^t(v_s) \Vert.$$
Such a splitting is called a \textbf{dominated splitting}. We denote by $E^{ws} \coloneqq \pi^{-1}\big(\overline{E}^s\big)$ and $E^{wu} \coloneqq \pi^{-1}\big(\overline{E}^u\big)$ the weak-stable and weak-unstable bundles of $\Phi$, respectively.
\item In both cases, if the constant $C$ can be chosen to be $1$, the corresponding metrics $g$ and $\overline{g}$ are called \textbf{adapted} to $\Phi$. 
\item The Anosov (resp.~projectively Anosov) flow $\Phi$ is \textbf{oriented} if $E^s$ and $E^u$ are oriented (resp.~$\overline{E}^s$ and $\overline{E}^u$ are oriented) and their orientations are compatible with the splitting~\eqref{anosovsplit} (resp.~the splitting~\eqref{projanosovsplit}).
\end{itemize}
\end{defn}

Anosov flows are projectively Anosov, with dominated splitting $N_X = \pi\big(E^s\big) \oplus \pi\big(E^u\big)$. Every three dimensional (projectively) Anosov flow admits a smooth adapted metric, see~\cite[Proposition 5.1.5]{FH19}.\footnote{The proof is given for Anosov flows but it easily generalizes to projectively Anosov flows in dimension three; note that it is not sufficient to integrate an arbitrary metric along the flow for a large time!} Anosov famously showed that the weak and strong stable/unstable bundles of an Anosov flow are \emph{uniquely integrable}. Moreover, $E^{ws}$ and $E^{wu}$ integrate into \emph{taut foliations} $\mathcal{F}^{ws}$ and $\mathcal{F}^{wu}$, respectively. This is not true for projectively Anosov flows; see~\cite[Example 2.2.9]{ET}.

    \noindent\fbox{%
    \parbox{\linewidth - 3\fboxsep}{%
    \textbf{In the rest of the article, we implicitly assume that all of the Anosov and projectively Anosov flows under consideration are oriented.}
    }%
    }

\begin{figure}[t]
    \begin{center}
        \begin{picture}(60, 60)(0,0)
        \put(0,0){\includegraphics[width=60mm]{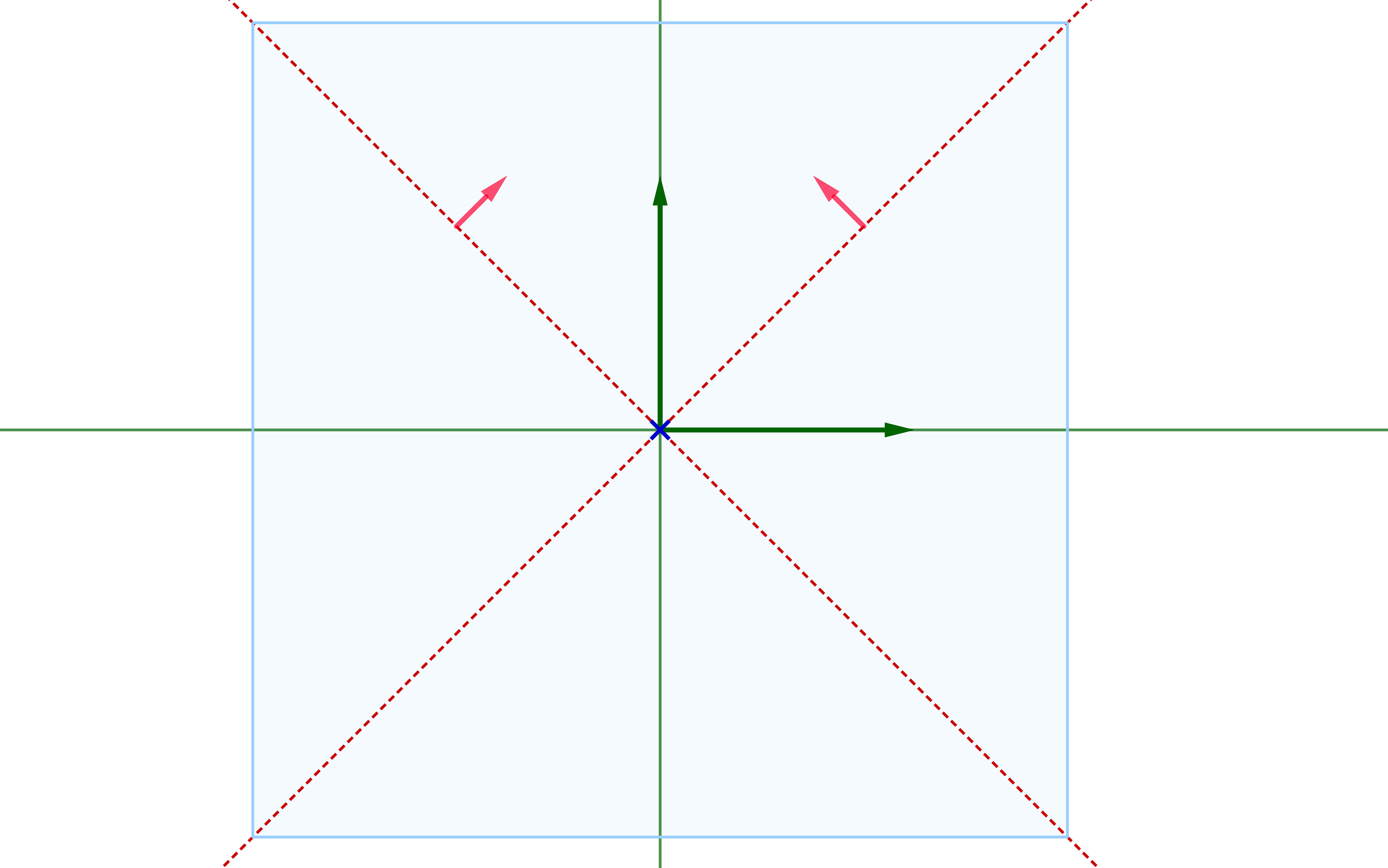}}
        \color{blue1}
        \put(26.5,24){$X$}
        \color{green1}
        \put(40,27){$e_s$}
        \put(52,26){$E^s$}
        \put(31,53){$E^u$}
        \put(31,41){$e_u$}
        \color{red1}
        \put(5,47){$\xi_-$}
        \put(52,47){$\xi_+$}
        \end{picture}
        \captionsetup{width=90mm}
        \caption{Orientation convention for the Anosov splitting. There is a similar picture for the dominated splitting. The vector field X points toward the reader, watch out.}
        \label{fig:Anosovorientation}
    \end{center}
\end{figure}

\bigskip

The following definition appears in~\cite[Definition 3.11]{H22a}, see also~\cite[Proposition 3.12]{H22a}.

\begin{defn} Let $\Phi$ be a projectively Anosov flow on $M$ generated by a vector field $X$ and $\overline{g}$ be a Riemannian metric on $N_X$. The \textbf{expansion rates} in the stable and unstable directions for $\overline{g}$ are continuous functions $r_s, r_u : M \rightarrow \R$ defined by $$ r_s \coloneqq \left. \frac{\partial}{\partial t}\right\rvert_{t=0} \ln \Vert d\phi^t (\overline{e}_s)\Vert, \qquad r_u \coloneqq \left. \frac{\partial}{\partial t}\right\rvert_{t=0} \ln \Vert d\phi^t (\overline{e}_u)\Vert,$$
where $\overline{e}_s$ and $\overline{e}_u$ are unit sections of $\overline{E}^s$ and $\overline{E}^u$, respectively, which are continuous and continuously differentiable along the flow $\Phi$.\footnote{In particular, the function $t \mapsto d\phi^t(\overline{e}_{s,u})$ is differentiable and has positive norm, so $r_{s,u}$ is well-defined.} Moreover, $$\mathcal{L}_X \overline{e}_s = -r_s \overline{e}_s, \qquad \mathcal{L}_X \overline{e}_u = -r_u \overline{e}_u.$$
\end{defn}

The Lie derivative above means the following: if $e_{s,u}$ is a vector field on $M$ which is a lift of $\overline{e}_{s,u}$ with the same regularity, the quantity $\mathcal{L}_X \overline{e}_{s,u} \coloneqq \pi\big(\mathcal{L}_X e_{s,u}\big)$ is a section of $\overline{E}^{s,u}$ which is independent of the choice of the lift. Here, $\mathcal{L}_X e_{s,u}$ denotes the usual Lie derivative along $X$, defined by
$$\mathcal{L}_X e_{s,u} \coloneqq \left. \frac{\partial}{\partial t}\right\rvert_{t=0} \big(\phi^t\big)^* e_{s,u}.$$ Notice that this involves the \emph{pullback} along $\phi^t$, and thus the differential of the \emph{inverse} of the flow, which explains the presence of negative signs in the previous formulae.

Many natural quantities (e.g., functions, vector fields, $1$-forms, Riemannian metrics) defined for (projectively) Anosov flows are continuous and can be upgraded to quantities which are continuous \emph{and} continuously differentiable along the flow by considering the averaging $$\frac{1}{T}\int_0^T \big(\phi^t\big)^*\_ \, dt$$ for some $T>0$. Nevertheless, these quantities may not be $\mathcal{C}^1$. It is therefore natural to consider the following spaces (only the cases $k=0,1$ and $n=0,1$ will be relevant for us).

\begin{defn} \label{def:reg}
Le $X$ be a smooth, non-singular vector field on $M$, and $k \geq 0$ be a non-negative integer.
\begin{itemize}
    \item A $n$-form $\alpha$ on $M$ is of class $\mathcal{C}^k_X$ if $\alpha$ is differentiable along $X$, and both $\alpha$ and $\mathcal{L}_X \alpha$ are of class $\mathcal{C}^k$. We denote by $\Omega^n_{X,k}$ the space of $n$-forms on $M$ of class $\mathcal{C}^k_X$ satisfying $\iota_X \alpha = 0$ (which is vacuous for $n=0$, i.e., for functions). We also denote by $\Omega^n_{X} = \Omega^n_{X, \infty} \subset \Omega^n$ the space of smooth $n$-forms satisfying $\iota_X \alpha = 0$.
    \item On $\Omega^n_{X,k}$, there is a natural norm defined by $$\vert \alpha \vert_{\mathcal{C}_X^k} \coloneqq \vert \alpha \vert_{\mathcal{C}^k} + \vert \mathcal{L}_X \alpha \vert_{\mathcal{C}^k},$$ making $\big(\Omega_{X,k}^n, \vert \cdot \vert_{\mathcal{C}_X^k}\big)$ a Banach space. $\Omega^n_{X}$ is naturally a Fr\'echet space as a closed subspace of $\Omega^n$
\end{itemize}
\end{defn}
These definitions naturally extend to sections of $N_X$ and $n$-forms on $N_X$.

In Appendix~\ref{appendixA}, we show some density results for these spaces which are particularly useful when dealing with (projectively) Anosov flows generated by $\mathcal{C}^1$ vector fields and can be used to bypass Hozoori's delicate approximation techniques in~\cite[Section 4]{H22a}. The results in Appendix~\ref{appendixA} are not needed (except in the proof of Theorem~\ref{thm:projcontract}) if we restrict our attention to smooth Anosov flows in view of Lemma~\ref{lem:c1} below.

\medskip

In dimension three, the definitions of Anosov and projectively Anosov flows can be rephrased in terms of the existence of certain $1$-forms. The following lemma is essentially an adaptation of results of Mitsumatsu~\cite{M95} and Hozoori~\cite{H22a, H22b}.

\begin{lem} \label{lem:anosovforms}
Let $\Phi$ be a smooth, non-singular flow on $M$ generated by a vector field $X$, then
\begin{enumerate}[label=(\arabic*)]
\item $\Phi$ is oriented projectively Anosov if and only if there exist $(\alpha_s, \alpha_u) \in \Omega^1_{X,0} \times \Omega^1_{X,0}$ and continuous functions $r_u, r_s : M \rightarrow \R$ such that
	\begin{align*}
	\overline{\alpha}_s \wedge \overline{\alpha}_u > 0, & & \mathcal{L}_X \alpha_s = r_s \, \alpha_s, & &\mathcal{L}_X \alpha_u = r_u \, \alpha_u,
	\end{align*}
	and $r_s < r_u$. Here, $\overline{\alpha}_s$ and $ \overline{\alpha}_u$ denote the $1$-forms on $N_X$ induced by $\alpha_s$ and $\alpha_u$, respectively.
\item $\Phi$ is oriented Anosov if and only if there exist $\alpha_s$, $\alpha_u$, $r_s$, $r_u$ as above such that $r_s < 0 < r_u$.
\item $\Phi$ is oriented volume preserving Anosov if and only if there exist $\alpha_s$, $\alpha_u$, $r_s$, $r_u$ as above such that $r_u + r_s = 0$.
\end{enumerate}
Moreover, $\ker \alpha_u = E^{ws}$ and $\ker \alpha_s = E^{wu}$.
\end{lem}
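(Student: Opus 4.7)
The plan is to identify each pair $(\alpha_s, \alpha_u) \in \Omega^1_{X,0}\times \Omega^1_{X,0}$ satisfying the stated conditions with an ordered, oriented, transverse pair of flow-invariant line fields $(\overline{E}^u, \overline{E}^s) \subset N_X$ via $\overline{E}^u = \ker \overline{\alpha}_s$ and $\overline{E}^s = \ker \overline{\alpha}_u$. The key algebraic fact is that for $\alpha \in \Omega^1_{X,0}$, the equation $\mathcal{L}_X \alpha = r\alpha$ for some continuous function $r$ is equivalent to $\ker \alpha \subset TM$ being a flow-invariant $2$-plane field; this follows from $(\mathcal{L}_X \alpha)(Y) = X(\alpha(Y)) - \alpha([X, Y])$ combined with $\iota_X \mathcal{L}_X \alpha = 0$. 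The positivity condition $\overline{\alpha}_s \wedge \overline{\alpha}_u > 0$ then encodes both the transversality $\overline{E}^u \oplus \overline{E}^s = N_X$ and the compatibility of their orientations with the induced orientation on $N_X$.

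For the forward direction of (1), I would start with the dominated splitting $N_X = \overline{E}^s \oplus \overline{E}^u$ of a projectively Anosov flow, fix an adapted metric (guaranteed by~\cite[Proposition 5.1.5]{FH19}), and pick unit sections $\overline{e}_s, \overline{e}_u$; these lie in $\mathcal{C}^0_X$ by construction. Defining $\overline{\alpha}_u, \overline{\alpha}_s$ dually by $\overline{\alpha}_u(\overline{e}_s) = 0$, $\overline{\alpha}_u(\overline{e}_u) = 1$ and symmetrically for $\overline{\alpha}_s$, the relation $\mathcal{L}_X \overline{e}_{s,u} = -\rho_{s,u} \overline{e}_{s,u}$ yields $\mathcal{L}_X \overline{\alpha}_{s,u} = \rho_{s,u} \overline{\alpha}_{s,u}$, so the choice $r_{s,u} \coloneqq \rho_{s,u}$ works. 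The adapted metric guarantees the pointwise gap $\rho_u - \rho_s \geq a > 0$, giving $r_s < r_u$; in the Anosov case (2) the metric additionally gives $\rho_s \leq -a < 0 < a \leq \rho_u$; and in the volume-preserving case (3) one rescales so that $\overline{\alpha}_s \wedge \overline{\alpha}_u = \overline{\iota_X \mu}$ on $N_X$, with $\mu$ the invariant volume on $M$, forcing $r_s + r_u = 0$ through $\mathcal{L}_X(\overline{\alpha}_s \wedge \overline{\alpha}_u) = (r_s + r_u)\,\overline{\alpha}_s \wedge \overline{\alpha}_u$.

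For the reverse direction, the line fields $\overline{E}^u = \ker \overline{\alpha}_s$ and $\overline{E}^s = \ker \overline{\alpha}_u$ already give a flow-invariant, positively oriented, transverse splitting of $N_X$. To recover the quantitative hyperbolicity, I would pick any Riemannian metric on $N_X$ with unit sections $\overline{e}_{s,u}$, set $f_{s,u} \coloneqq \overline{\alpha}_{s,u}(\overline{e}_{s,u})$, and compute $r_{s,u} = \rho_{s,u} + X(\ln|f_{s,u}|)$ by evaluating $(\mathcal{L}_X \overline{\alpha}_{s,u})(\overline{e}_{s,u})$ in two ways. Integrating along an orbit over time $T$ turns $X(\ln|f_{s,u}|)$ into a bounded coboundary; compactness of $M$ then promotes the pointwise strict inequalities $r_s < r_u$ (resp.\ $r_s < 0 < r_u$) to the uniform estimates on $\int_0^T (\rho_u - \rho_s)\circ \phi^t\, dt$ (resp.\ on $\int_0^T \rho_{s,u}\circ \phi^t\, dt$) that define the projective Anosov (resp.\ Anosov-on-$N_X$) property. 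For (2), I would then lift from $N_X$ to $TM$ by the classical strong bundle construction: inside the 2-plane field $E^{ws} = \ker \alpha_u$, the unique flow-invariant 1-dimensional complement to $\langle X \rangle$ is the strong stable bundle $E^s$, whose existence follows from the graph transform applied to the contraction on $\overline{E}^s$, and similarly for $E^u$. For (3), the 2-form $\alpha_s \wedge \alpha_u$ satisfies $\iota_X(\alpha_s \wedge \alpha_u) = 0$ and $\mathcal{L}_X(\alpha_s \wedge \alpha_u) = 0$; for any $\theta \in \Omega^1(M)$ with $\theta(X) = 1$, the 3-form $\theta \wedge \alpha_s \wedge \alpha_u$ is a volume form, and $(\mathcal{L}_X \theta)(X) = X(\theta(X)) = 0$ makes it flow-invariant. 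The identifications $\ker \alpha_u = E^{ws}$ and $\ker \alpha_s = E^{wu}$ are immediate from the construction.

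The main obstacle is regularity: the dominated and hyperbolic splittings of a three-dimensional (projectively) Anosov flow are only $\mathcal{C}^0$ in general, so $\overline{e}_{s,u}$ and hence $\alpha_{s,u}$ live in $\Omega^1_{X,0}$ rather than in the smooth category, and all Lie derivatives must be interpreted in the weak sense of Definition~\ref{def:reg}. A secondary subtlety is that the lemma requires \emph{pointwise} inequalities on $r_s, r_u$, not merely integrated or asymptotic ones; this dictates the use of an adapted metric in the forward direction and the cohomological identity $r_{s,u} - \rho_{s,u} = X(\ln|f_{s,u}|)$ in the reverse direction.
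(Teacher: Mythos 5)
Your proposal follows essentially the same route as the paper: the forward directions use unit sections of a smooth adapted metric and their dual coframes (your symmetric rescaling in (3) plays the role of the paper's choice of an adapted metric whose induced area form on $N_X$ is $\iota_X\mathrm{dvol}$), and the converse of (1) rests on the norm-growth identity $\phi^T_*\overline{e}_\star = \exp\big(\int_0^T r_\star\circ\phi^t\,dt\big)\,\overline{e}_\star$ together with compactness of $M$; your coboundary bookkeeping with $X\cdot\ln|f_{s,u}|$ is just an explicit verification of metric-independence. Where the paper simply cites~\cite{H22a} (Propositions~3.15 and~3.17) for the converse of (2), you sketch the graph-transform construction of the strong bundles; this is exactly the content of the cited result and the sketch is sound, though the crux is existence rather than the ``uniqueness of the invariant complement'' you lead with, and it is worth recording the final step: once an invariant line field transverse to $\langle X\rangle$ inside $E^{ws}$ is produced, the exponential contraction in $TM$ follows from the contraction in $N_X$ because the projection of $E^s$ onto $\overline{E}^s$ has uniformly bounded distortion over the compact manifold.

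The one genuine gap is in the converse of (3). Your construction yields the invariant volume form $\theta\wedge\alpha_s\wedge\alpha_u$, but since $\alpha_s,\alpha_u$ are only of class $\mathcal{C}^0_X$ this form is merely continuous (and differentiable along $X$), whereas ``volume preserving'' in this paper means preservation of a \emph{smooth} volume form. The missing step is the regularity upgrade: a continuous invariant volume form for a smooth Anosov flow is automatically smooth, which the paper gets by invoking~\cite[Corollary 2.1]{LMM}; this is a genuine theorem, not a routine mollification, so it must be cited or proved. You should also record the one-line observation that $r_s<r_u$ together with $r_s+r_u=0$ forces $r_s<0<r_u$, so that (2) applies and the flow is Anosov in the first place before one speaks of the invariant volume.
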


\begin{proof} (1) essentially follows from~\cite[Proposition 3.15]{H22a}. We recall the main arguments. If $\Phi$ is projectively Anosov, we can choose an adapted metric and unit vector fields $\overline{e}_s$ and $\overline{e}_u \in N_X$ of class $\mathcal{C}^0_X$ such that $\overline{e}_s$ spans $\overline{E}^s$, $\overline{e}_u$ spans $\overline{E}^u$ and $(\overline{e}_s, \overline{e}_u)$ is positively oriented. The inequality $r_s < r_u$ follows from the definition of a dominated splitting and the fact that the metric is adapted. If $(\overline{\alpha}_s,\overline{\alpha}_u)$ denotes the dual basis of $(\overline{e}_s, \overline{e}_u)$, it induces a pair $(\alpha_s, \alpha_u) \in \Omega^1_{X,0} \times \Omega^1_{X,0}$, and the relations $\mathcal{L}_X \overline{e}_s = - r_s \overline{e}_s$ and $\mathcal{L}_X \overline{e}_u = - r_u \overline{e}_u$ imply the desired relations for $\alpha_s$ and $\alpha_u$. Reciprocally, if $(\alpha_s, \alpha_u)$ is such a pair, we define $(\overline{e}_s, \overline{e}_u)$ as the dual basis of $(\overline{\alpha}_s,\overline{\alpha}_u)$ and we easily check that it yields a projectively Anosov splitting of $N_X$. This is essentially because for $\star \in \{s,u\}$, $\phi^T_* \overline{e}_{\star} = \exp \left( \int^T_0 r_{\star} \circ \phi^t \, dt \right) \overline{e}_{\star}$ and $r_s < r_u$, where $\Phi = \{\phi^t\}$. In particular, we have $\ker \alpha_s = E^{wu}$ and $\ker \alpha_u = E^{ws}$ since these bundles are uniquely determined by the flow.

(2) follows from (1) and~\cite[Proposition 3.17]{H22a}.

The forward direction of (3) follows from the proof of~\cite[Theorem 3]{M95}. Indeed, assuming that $\Phi$ is volume preserving Anosov, we can arrange that $r_u + r_s = 0$ in the following way. If $\mathrm{dvol}$ is a (smooth) volume form preserved by $\Phi$, then $\tau \coloneqq \iota_X \mathrm{dvol}$ is a non-degenerate $2$-form on $N_X$ invariant under $\Phi$. There exists an adapted metric $g$ of class $\mathcal{C}^0_X$ for which the Anosov splitting is orthogonal and the volume form for the induced metric $\overline{g}$ on $N_X$ is precisely $\tau$.\footnote{The induced metric $\overline{g}$ is the pushforward of the restriction of $g$ to $E^s \oplus E^u$ along the projection $E^s \oplus E^u \rightarrow N_X$, which is an isomorphism. Concretely, if $\overline{v}_1, \overline{v}_2$ are vectors in $N_X$ with lifts $v_1, v_2 \in E^s \oplus E^u$, then $\overline{g}(\overline{v}_1, \overline{v}_2) = g(v_1, v_2)$. Since $X$ is orthogonal to $E^s \oplus E^u$ for $g$, the latter quantity does not depend on the choice of such lifts.} Hence, if $e_s$ and $e_u$ are $\mathcal{C}^0_X$ unit vector fields spanning $E^s$ and $E^u$, respectively, then $\tau (\overline{e}_s, \overline{e}_u) = 1$. Differentiating this equality along $X$ yields $r_u + r_s = 0$ as desired, and we obtain $(\alpha_s, \alpha_u)$ by dualization as before. For the reverse direction, $r_s < 0 < r_u$ since $r_s < r_u$ and $r_s = -r_u$, so $\Phi$ is Anosov by (2). Moreover, if $\theta$ is a smooth $1$-form satisfying $\theta(X) \equiv 1$, then $\mathrm{dvol} \coloneqq \alpha_s \wedge \alpha_u \wedge \theta$ is a $\mathcal{C}^0_X$ volume form preserved by $X$ and by~\cite[Corollary 2.1]{LMM}, this volume form is smooth.
\end{proof}

It is well-known that in dimension three, the regularity of the weak-stable and weak-unstable bundles of a smooth (even $\mathcal{C}^2$) Anosov flow are $\mathcal{C}^1$. We have the following

\begin{lem} \label{lem:c1}
If $\Phi$ is Anosov and smooth, we can further assume that $\alpha_s$, $\alpha_u$, $r_s$ and $r_u$ as in Lemma~\ref{lem:anosovforms} are $\mathcal{C}^1$, i.e., $\alpha_s$ and $\alpha_u$ are $\mathcal{C}^1_X$.
\end{lem}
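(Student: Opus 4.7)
The plan is to produce $\mathcal{C}^1_X$ representatives for $\alpha_s$ and $\alpha_u$ in two stages: first upgrade spatial regularity from $\mathcal{C}^0$ to $\mathcal{C}^1$ using the classical regularity of the weak bundles, then average along the flow to gain $\mathcal{C}^1$-regularity of the Lie derivatives. For the spatial step, I would invoke the well-known fact that for a smooth Anosov flow on a closed three-manifold, the weak bundles $E^{ws}$ and $E^{wu}$ are of class $\mathcal{C}^1$---indeed $\mathcal{C}^{1+\alpha}$ for some $\alpha > 0$, by the Hirsch--Pugh--Shub invariant-section theorem combined with domination; see~\cite{FH19}. Thus one can pick $\mathcal{C}^1$ defining $1$-forms $\tilde\alpha_s, \tilde\alpha_u$ with $\iota_X \tilde\alpha_{s,u} = 0$, $\ker \tilde\alpha_s = E^{wu}$, $\ker \tilde\alpha_u = E^{ws}$, the correct orientations, and (after rescaling by positive $\mathcal{C}^1$ factors coming from an adapted metric) with continuous scalars $\tilde r_{s,u}$ defined by $\mathcal{L}_X \tilde\alpha_{s,u} = \tilde r_{s,u} \tilde\alpha_{s,u}$ satisfying $\tilde r_s < 0 < \tilde r_u$ pointwise on $M$.

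For the flow-direction step, I would fix some $T > 0$ and set
$$\alpha_s \coloneqq \frac{1}{T}\int_0^T (\phi^t)^* \tilde\alpha_s \, dt, \qquad \alpha_u \coloneqq \frac{1}{T}\int_0^T (\phi^t)^* \tilde\alpha_u \, dt.$$
Since $\phi^t$ is smooth in both variables and $\tilde\alpha_{s,u}$ is $\mathcal{C}^1$, each integrand is a smoothly-in-$t$ varying family of $\mathcal{C}^1$ forms, so $\alpha_s$ and $\alpha_u$ are $\mathcal{C}^1$ on $M$. The telescoping identity
$$\mathcal{L}_X \alpha_s = \frac{1}{T}\int_0^T \frac{d}{dt}(\phi^t)^* \tilde\alpha_s \, dt = \frac{1}{T}\bigl[(\phi^T)^* \tilde\alpha_s - \tilde\alpha_s\bigr]$$
then shows $\mathcal{L}_X \alpha_s$ is again $\mathcal{C}^1$, so $\alpha_s \in \Omega^1_{X,1}$, and analogously $\alpha_u \in \Omega^1_{X,1}$.

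Finally, I would verify the structural conclusions of Lemma~\ref{lem:anosovforms}(2). Flow-invariance of $E^{wu}$ gives $\ker\bigl((\phi^t)^* \tilde\alpha_s\bigr) = E^{wu}$ for every $t$, so each $(\phi^t)^* \tilde\alpha_s$ is a positive scalar multiple of $\tilde\alpha_s$ and $\ker \alpha_s = E^{wu}$ (likewise $\ker \alpha_u = E^{ws}$), while $\iota_X \alpha_{s,u} = 0$ is preserved because $(\phi^t)_* X = X$. Writing $\mathcal{L}_X \alpha_s = r_s \alpha_s$ and evaluating on any smooth vector field $Y$ with $\alpha_s(Y)$ nowhere zero realizes $r_s$ as a ratio of two $\mathcal{C}^1$ functions with non-vanishing denominator, hence $r_s \in \mathcal{C}^1$; the same applies to $r_u$. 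The signs $r_s < 0 < r_u$ follow since $\tilde r_s < 0$ makes $(\phi^T)^* \tilde\alpha_s$ a scalar multiple of $\tilde\alpha_s$ by a factor in $(0,1)$, so $\mathcal{L}_X \alpha_s$ is a negative scalar multiple of $\alpha_s$, and dually for $\alpha_u$. The main obstacle is really the first step: one must invoke the genuine $\mathcal{C}^1$ (not merely Hölder) regularity of the weak bundles, which is a nontrivial regularity result specific to dimension three; everything after that is routine bookkeeping with the averaging operator.
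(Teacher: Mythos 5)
Your argument is correct, and it reaches the conclusion by a genuinely different mechanism than the paper in the step that actually matters. The first stage is the same in both: the paper invokes Hasselblatt's regularity theorem (its reference \cite{H94}) together with a smooth adapted metric to get $\mathcal{C}^1$ defining forms with continuous rates $r_s<0<r_u$, exactly as you do via the $\mathcal{C}^1$ regularity of $E^{ws}$, $E^{wu}$. The divergence is in how one upgrades the rates from $\mathcal{C}^0$ to $\mathcal{C}^1$: the paper uses Simi\'{c}'s trick, rescaling $\alpha_u$ by a $\mathcal{C}^1$ factor $h=\widetilde{f}/f$ so that $\widetilde{\alpha}_u(Z)=\widetilde{f}$ is \emph{smooth} for a fixed smooth transverse field $Z$, whence $\widetilde{r}_u\widetilde{f}=X\cdot\widetilde{f}-\widetilde{\alpha}_u(\mathcal{L}_XZ)$ is $\mathcal{C}^1$; you instead average over a flow window of length $T$ and exploit the telescoping identity $\mathcal{L}_X\alpha_s=\tfrac{1}{T}\bigl[(\phi^T)^*\tilde\alpha_s-\tilde\alpha_s\bigr]$, which is $\mathcal{C}^1$ because $\tilde\alpha_s$ is $\mathcal{C}^1$ and $\phi^T$ is smooth. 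Your verification of the remaining bookkeeping is sound: the averaged forms are pointwise positive multiples of the originals (so kernels, orientations, and $\overline{\alpha}_s\wedge\overline{\alpha}_u>0$ are preserved), differentiation under the integral sign only needs continuity of the spatial derivatives in $(t,x)$, and the sign of $r_{s,u}$ follows since the multiplier $\exp\bigl(\int_0^T\tilde r_{s,u}\circ\phi^\tau\,d\tau\bigr)$ lies in $(0,1)$, respectively $(1,\infty)$. Note that this use of averaging is consistent with the paper's cautionary remark that averaging merely continuous data only yields $\mathcal{C}^0_X$ regularity: your input is already $\mathcal{C}^1$, which is exactly why the output is $\mathcal{C}^1_X$. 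What each approach buys: Simi\'{c}'s trick is a small multiplicative perturbation (the new pair is $\mathcal{C}^1$-close to the old one, and, as the paper's subsequent remark exploits, it bootstraps to show all iterated Lie derivatives $\mathcal{L}_X^n\widetilde{\alpha}_u$, $\mathcal{L}_X^n\widetilde{r}_u$ are $\mathcal{C}^1$, which your averaging does not give, since $\mathcal{L}_X^2\alpha_s$ telescopes to a difference of forms that are only $\mathcal{C}^0$); your averaging is arguably more self-contained for the statement at hand, producing $\mathcal{L}_X\alpha_{s,u}$ in closed form as a finite difference. For the lemma as stated, both proofs are complete.
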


\begin{proof}
By~\cite[Corollary 1.8]{H94}, $E^{ws}$ and $E^{wu}$ are $\mathcal{C}^1$ and an adapted metric can always be assumed to be smooth, so the construction in Lemma~\ref{lem:anosovforms} yields $\mathcal{C}^1$ $1$-forms $\alpha_s$ and $\alpha_u$. The $\mathcal{C}^1$ regularity of $r_s$ and $r_u$ follows from a trick of Simi\'{c}~\cite{S97}. First, let us choose a $\mathcal{C}^1$ $1$-form $\alpha_u$ such that $\ker \alpha_u = E^{ws}$ and $\mathcal{L}_X \alpha_u  = r_u \, \alpha_u$, where $r_u$ is continuous and positive. Fix a smooth vector field $Z$ positively transverse to $E^{ws}$, so that $f \coloneqq \alpha_u(Z) > 0$. Here, $f$ is $\mathcal{C}^1$ and can be approximated by a smooth function $\widetilde{f} > 0$ so that $h \coloneqq\frac{\widetilde{f}}{f}$ is $\mathcal{C}^1$-close to $1$. Setting $\widetilde{\alpha}_u \coloneqq h  \alpha_u$, $\widetilde{\alpha}_u$ is $\mathcal{C}^1$ and satisfies $\mathcal{L}_X \widetilde{\alpha}_u = \widetilde{r}_u \, \widetilde{\alpha}_u$, where $\widetilde{r}_u$ is $\mathcal{C}^0$-close to $r_u$ and can be assumed to be positive. We now show that $\widetilde{r}_u$ is $\mathcal{C}^1$. Indeed, $\widetilde{\alpha}_u(Z) = \widetilde{f}$ and
\begin{align*}
\widetilde{r}_u \widetilde{f} &= \left(\mathcal{L}_X \widetilde{\alpha}_u \right)(Z) \\
			&= \mathcal{L}_X \left( \widetilde{\alpha}_u(Z)\right) - \widetilde{\alpha}_u \left( \mathcal{L}_X Z\right) \\
			&= X\cdot \widetilde{f}- \widetilde{\alpha}_u \left( \mathcal{L}_X Z\right),
\end{align*}
and the last quantity is $\mathcal{C}^1$ since $X\cdot \widetilde{f}$ and $\mathcal{L}_X Z$ are smooth and $\widetilde{\alpha}_u$ is $\mathcal{C}^1$. The same argument applies to $\alpha_s$.
\end{proof}

\begin{rem} The proof actually shows more. Since $\widetilde{r}_u = u + \widetilde{\alpha}_u (V)$ for some smooth function $u$ and some smooth vector field $V$, and $\mathcal{L}_X \widetilde{\alpha}_u = \widetilde{r}_u \, \widetilde{\alpha}_u$, an immediate induction argument shows that for every integer $n \geq 0$, $\mathcal{L}_X^n \widetilde{\alpha}_u$ and $\mathcal{L}_X^n \widetilde{r}_u$ exist and are $\mathcal{C}^1$, where $\mathcal{L}_X^n \coloneqq \mathcal{L}_X \circ \dots \circ \mathcal{L}_X$ denotes the Lie derivative along $X$ iterated $n$ times. In fact, it is well known that in our setting, the individual leaves of the weak-(un)stable foliation are smooth (see~\cite[Lemma 2.1]{LMM}).
\end{rem}

\begin{rem} The same argument works for smooth projectively Anosov flow whose weak-stable and weak-unstable distributions are $\mathcal{C}^1$. However, there are known examples of smooth projectively Anosov flows in dimension three whose weak distributions are not $\mathcal{C}^1$; see~\cite[Example 2.2.9]{ET}.
\end{rem}

We call a pair of $1$-forms $(\alpha_s, \alpha_u)$ as in Lemma~\ref{lem:anosovforms} $(1)$ (resp.~$(2)$, $(3)$) a \textbf{defining pair} for the projectively Anosov (resp.~Anosov, volume preserving Anosov) flow $\Phi$. We further require defining pairs for (volume preserving) Anosov flows to be $\mathcal{C}^1$. We also impose the following conditions on orientations:
\begin{itemize}
    \item The orientation on $E^{ws}$, induced by the orientation of $X$ and the orientation on $E^s$ or $\overline{E}^s$ which implicitly comes with $\Phi$, agrees with the one induced by $\alpha_u$,
    \item The orientation on $E^{wu}$, induced by the orientation of $X$ and the orientation on $E^u$ or $\overline{E}^u$ which implicitly comes with $\Phi$, agrees with the one induced by $- \alpha_s$.\footnote{The somewhat strange minus sign is explained by the following remark. In the Euclidean plane $\R^2$ with its standard oriented basis $(e_1, e_2)$, the dual basis $(e^*_1, e^*_2)$ induces coorientations on the $x$ and $y$-axis corresponding to the natural orientation on the $x$-axis and to the \emph{opposite} of the natural orientation on the $y$-axis.}
\end{itemize}
Concretely, these properties mean that if $\star \in \{s,u\}$ and $\overline{e}_\star \in \overline{E}^\star$ forms an oriented basis, then $\alpha_\star(\overline{e}_\star) > 0$.

We denote by $\mathcal{D}_\Phi$ the space of defining pairs for $\Phi$ endowed with the $\mathcal{C}^0_X$ topology in the projectively Anosov case, and with the $\mathcal{C}^1$ topology in the (volume preserving) Anosov case.

\begin{lem} \label{lem:contract} The space $\mathcal{D}_\Phi$ of defining pairs for a projectively Anosov (resp.~Anosov, volume preserving Anosov) flow $\Phi$  with its corresponding topology is contractible.
\end{lem}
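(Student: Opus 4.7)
The plan is to exhibit $\mathcal{D}_\Phi$ as a nonempty convex subset of a topological vector space (after a natural change of variables), which is then contracted to a reference pair via a straight-line homotopy.

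Fix a reference defining pair $(\alpha_s^0, \alpha_u^0) \in \mathcal{D}_\Phi$, furnished by Lemma~\ref{lem:anosovforms} (and Lemma~\ref{lem:c1} in the Anosov cases), with associated expansion rates $r_s^0$ and $r_u^0$. Since the weak bundles $\ker \alpha_s = E^{wu}$ and $\ker \alpha_u = E^{ws}$ and the sign/orientation conventions depend only on $\Phi$, every defining pair can be written uniquely as $(e^{g_s}\alpha_s^0, e^{g_u}\alpha_u^0)$ for a pair of real-valued functions $(g_s, g_u)$ on $M$ of the appropriate regularity (class $\mathcal{C}^0_X$ in the projectively Anosov case, class $\mathcal{C}^1$ in the Anosov and volume preserving Anosov cases). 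Continuity of the exponential and logarithm in the norms of Definition~\ref{def:reg} makes this parametrization a homeomorphism between $\mathcal{D}_\Phi$ and the corresponding open set in the Banach space of pairs of functions. A direct Leibniz calculation gives $\mathcal{L}_X (e^{g_s}\alpha_s^0) = (r_s^0 + X \cdot g_s)\, e^{g_s}\alpha_s^0$, so the new expansion rates are $r_s = r_s^0 + X \cdot g_s$ and $r_u = r_u^0 + X \cdot g_u$, and the positivity $\overline{\alpha}_s \wedge \overline{\alpha}_u > 0$ is automatic.

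The defining inequalities thus translate into linear conditions on $(g_s, g_u)$: the projectively Anosov condition $r_s < r_u$ becomes $X \cdot (g_u - g_s) > -(r_u^0 - r_s^0)$; the Anosov condition $r_s < 0 < r_u$ becomes $X \cdot g_s < -r_s^0$ together with $X \cdot g_u > -r_u^0$; and the volume preserving Anosov condition, after choosing the reference so that $r_s^0 + r_u^0 = 0$ (possible by Lemma~\ref{lem:anosovforms}(3)), adds the linear equation $X \cdot (g_s + g_u) = 0$ to the Anosov conditions. In each case the admissible set is the intersection of open half-space conditions of the form $X \cdot \varphi > c$ with (possibly) a linear subspace, which is convex, open in the ambient closed affine subspace, and contains the origin (corresponding to the reference pair). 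The straight-line homotopy $(g_s, g_u) \mapsto (1-t)(g_s, g_u)$ stays in this set by convexity and contracts $\mathcal{D}_\Phi$ to the reference pair, continuously in the prescribed topology.

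There is no substantial obstacle here: the only points requiring care are the verification that the exponential parametrization is a homeomorphism in the $\mathcal{C}^0_X$ or $\mathcal{C}^1$ topology (which is immediate from Definition~\ref{def:reg} and the chain rule) and the choice of a reference pair that is itself volume preserving in the third case (which is built into Lemma~\ref{lem:anosovforms}(3)). Everything else reduces to the elementary fact that a nonempty convex subset of a topological vector space is contractible.
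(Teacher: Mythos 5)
Your proposal is correct and follows essentially the same route as the paper: fix a reference defining pair, parametrize all others by exponential factors $(g_s,g_u)$ (the paper's $(\rho_s,\rho_u)$), observe that the expansion rates transform affinely via $r_\star = r_\star^0 + X\cdot g_\star$, and conclude that $\mathcal{D}_\Phi$ is homeomorphic to a convex (hence contractible) subset of the relevant function space in each of the three cases. The only cosmetic difference is that you spell out the straight-line contraction and the homeomorphism check, which the paper leaves implicit.
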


 \begin{proof}
Let us fix a defining pair $(\alpha_s, \alpha_u) \in \mathcal{D}_\Phi$ for a projectively Anosov flow $\Phi$ generated by a vector field $X$. If $(\alpha'_s, \alpha'_u) \in \mathcal{D}_\Phi$ is any other defining pair, then $\ker \alpha_u = \ker \alpha'_u$ and $\ker \alpha_s = \ker \alpha'_s$, and the orientations on these spaces agree. Hence, there exist (unique) functions $\rho_s, \rho_u : M \rightarrow \R$ of class $\mathcal{C}^0_X$ such that $\alpha'_u = e^{\rho_u} \alpha_u$ and $\alpha'_s = e^{\rho_s} \alpha_s$, and they satisfy
\begin{align} \label{eq:rho1}
r_u' - r_s' = X \cdot (\rho_u - \rho_s) + r_u - r_s > 0.
\end{align}
Here, $r_u'$ and $r_s'$ are such that $\mathcal{L}_X \alpha'_s = r'_s \, \alpha'_s$ and $\mathcal{L}_X \alpha'_u = r'_u \, \alpha'_u$.

Reciprocally, if $\rho_s, \rho_u : M \rightarrow \R$ are functions as above satisfying~\eqref{eq:rho1}, then $(\alpha'_s, \alpha'_u) \coloneqq (e^{\rho_s} \alpha_s, e^{\rho_u} \alpha_u)$ is also a defining pair for $\Phi$.

It follows that $\mathcal{D}_\Phi$ is homeomorphic to 
\begin{align*}
\mathcal{R} \coloneqq \big\{ (\rho_s, \rho_u) \in \mathcal{C}^0_X \times \mathcal{C}^0_X : X \cdot (\rho_u - \rho_s) + r_u - r_s > 0 \big\},
\end{align*}
and $\mathcal{R}$ is obviously convex, hence contractible. The proof for Anosov flows and volume preserving Anosov flows is similar. The condition on $\rho_s$ and $\rho_u$ becomes
\begin{align} \label{eq:rho2}
X \cdot \rho_u + r_u > 0 \qquad and \qquad X \cdot \rho_s + r_s < 0
\end{align}
if $\Phi$ is Anosov, and
\begin{align} \label{eq:rho3}
X \cdot (\rho_u - \rho_s) + r_u - r_s > 0  \qquad and \qquad X \cdot (\rho_u + \rho_s) = 0
\end{align}
if $\Phi$ is volume preserving Anosov. Both conditions~\eqref{eq:rho2} and~\eqref{eq:rho3} are convex in $(\rho_s, \rho_u)$.
\end{proof}

\begin{rem} If $\Phi$ is a (projectively, volume preserving) Anosov flow generated by a vector field $X$, $f : M \rightarrow \R_{>0}$ is a positive function and $\Phi'$ is the (projectively, volume preserving) Anosov flow generated by $fX$, then $\mathcal{D}_\Phi = \mathcal{D}_{\Phi'}$. Indeed, if $(\alpha_s, \alpha_u) \in \mathcal{D}_\Phi$ then for $\star \in \{s,u\}$,
$$\mathcal{L}_{fX} \alpha_\star = f r_\star \, \alpha_\star.$$
Since the stable/unstable bundles of $\Phi'$ are the same as the ones of $\Phi$, $(\alpha_s, \alpha_u)$ is a defining pair for $\Phi'$ with expansion rates $r'_{s,u} = f r_{s,u}$.

Therefore, there is a well-defined notion of defining pairs for (projectively, volume preserving) oriented Anosov \emph{line distributions}.
\end{rem}

	   \subsection{From Anosov flows to Anosov Liouville pairs} \label{sec:std}
	
Throughout this section, we assume that $\Phi$ is a smooth Anosov flow on $M$ and we construct an AL pair supporting $\Phi$, proving the first part of Theorem~\ref{thmintro:supporting}. We choose a $\mathcal{C}^1$ defining pair $(\alpha_s, \alpha_u) \in \mathcal{D}_\Phi$ as in Lemma~\ref{lem:anosovforms}(2). Following~\cite[Section 4]{H22a}, we define
\begin{align} \label{eq:standard}
    \alpha_- \coloneqq \alpha_u +\alpha_s, \qquad \alpha_+ \coloneqq  \alpha_u - \alpha_s.
\end{align}
Note that $\alpha_\pm$ is of class $\mathcal{C}_X^1$, and the orientation compatibility conditions of Definition~\ref{def:bicont} are satisfied. Let $\mathrm{dvol}$ be the $\mathcal{C}^1$ volume form on $M$ defined by $\alpha_s \wedge \alpha_u = \iota_X \mathrm{dvol}$. We will make use of the elementary identities:
\begin{align*}
\alpha_s \wedge d\alpha_u &= -r_u \, \mathrm{dvol},\\
\alpha_u \wedge d\alpha_s &= r_s \, \mathrm{dvol}, \\
\alpha_s \wedge d\alpha_s &= 0,\\
\alpha_u \wedge d\alpha_u &= 0.
\end{align*}
The first one follows from:
\begin{align*}
    \iota_X\big( \alpha_s \wedge d\alpha_u \big)&= - \alpha_s \wedge \iota_X d\alpha_u = - \alpha_s \wedge \mathcal{L}_X \alpha_u = -r_u \, \alpha_s \wedge \alpha_u = -r_u \, \mathrm{dvol},
\end{align*}
and the three others can be obtained by similar computations. We easily deduce:
\begin{align*}
\alpha_+ \wedge d\alpha_+ &= (r_u - r_s) \, \mathrm{dvol}, \\
\alpha_- \wedge d\alpha_- &= - (r_u - r_s) \, \mathrm{dvol}, \\
d(\alpha_- \wedge \alpha_+) &= 2 (r_u + r_s) \, \mathrm{dvol}.
\end{align*}
Since $r_s < 0 < r_u$, $\alpha_-$ and $\alpha_+$ are contact forms and the criterion of Lemma~\ref{lem:charanosov} is satisfied.\footnote{Note that $\lambda \coloneqq  e^{-s} \alpha_- + e^s \alpha_+$ is a Liouville form if and only if $r_u > 0$.} Therefore, $(\alpha_-, \alpha_+)$ is a $\mathcal{C}^1$ AL pair supporting $\Phi$. 

\begin{defn} A \textbf{standard AL pair} supporting $\Phi$ is a $\mathcal{C}^1$ AL pair obtained by the previous construction. We denote by $\mathcal{AL}^\mathrm{std}_\Phi$ the space of these AL pairs, endowed with the $\mathcal{C}^1$ topology.
\end{defn}

There is an obvious (linear) homeomorphism between $\mathcal{D}_\Phi$ and $\mathcal{AL}^\mathrm{std}_\Phi$ induced by the map $(s,u) \mapsto (u+s, u-s)$. Since $\mathcal{D}_\Phi$ is contractible by Lemma~\ref{lem:contract}, $\mathcal{AL}^\mathrm{std}_\Phi$ is contractible as well.

A standard AL pair $(\alpha_-, \alpha_+)$ is not necessarily smooth by definition; it is smooth exactly when the weak-stable and weak-unstable foliations of $\Phi$ are smooth, which is a quite restrictive situation.\footnote{By~\cite[Theorem 4.7]{Gh93}, the smoothness of the weak-(un)stable implies that $\Phi$ is topologically equivalent to an \emph{algebraic Anosov flow}, i.e., the suspension of an Anosov diffeomorphism of the 2-torus, or the geodesic flow on a closed hyperbolic surface, up to finite cover.} Nevertheless, any pair of smooth $1$-forms $(\alpha_-', \alpha_+')$ sufficiently $\mathcal{C}^1$-close to $(\alpha_-, \alpha_+)$ and satisfying $\alpha_\pm'(X) = 0$ is a smooth AL pair supporting $\Phi$. This shows the forward implication in Theorem~\ref{thmintro:supporting}.

Let $R_\pm$ denote the Reeb vector fields of $\alpha_\pm$, defined by
\begin{align*}
\alpha_\pm(R_\pm) &= 1, \\
d\alpha_\pm(R_\pm, \cdot \, ) &= 0.
\end{align*}
Rewriting these equations in terms of $\alpha_s$ and $\alpha_u$, and using the equalities
\begin{align*}
\iota_X d\alpha_s &= \mathcal{L}_X \alpha_s = r_s \, \alpha_s, \\
\iota_X d\alpha_u &= \mathcal{L}_X \alpha_u = r_u \, \alpha_u, \\
\end{align*}
one easily computes:
\begin{align*}
\alpha_s(R_-) &= \frac{r_u}{r_u - r_s}>0, & \alpha_u(R_-) &= \frac{-r_s}{r_u - r_s}>0, \\
\alpha_s(R_+) &= \frac{-r_u}{r_u - r_s}<0, & \alpha_u(R_+) &= \frac{-r_s}{r_u - r_s}>0.
\end{align*}
Therefore,
\begin{itemize}
    \item $R_-$ is positively transverse to $E^{ws}$ and $E^{wu}$,
    \item $R_+$ is positively transverse to $E^{ws}$ and negatively transverse to $E^{wu}$,
\end{itemize}
and this remains true for a smoothing of $(\alpha_-, \alpha_+)$ as above.\footnote{These transversality properties for the Reeb vector fields are a key feature of Anosov flows and are not satisfied for projectively Anosov flows which are not Anosov, see~\cite[Theorem 6.3]{H22a}.} Since $\mathcal{F}^{ws}$ is a \emph{taut} foliation, we obtain that $R_\pm$ has no contractible closed Reeb orbit, thus $\xi_\pm = \ker \alpha_\pm$ is \emph{hypertight}. This was already observed by Hozoori~\cite[Theorem 1.11]{H22a}.

		\subsection{From Anosov Liouville pairs to Anosov flows} \label{sec:AL}
  
We now turn to the second part of the proof of Theorem~\ref{thmintro:supporting}. Let us assume that $\Phi$ is a smooth non-singular flow on $M$ generated by a vector field $X$ and suppose that it is supported by an AL pair $(\alpha_-, \alpha_+)$. By Proposition~\ref{prop:anoliouv}, $(\alpha_-, \alpha_+)$ defines a bi-contact structure $(\xi_-, \xi_+)$ supporting $X$, so $\Phi$ is projectively Anosov and there exists a dominated splitting $N_X \cong \overline{E}^s \oplus \overline{E}^u$ as in Definition~\ref{definition:defanosov}. We shall construct a defining pair $(\alpha_s, \alpha_u)$ as in Lemma~\ref{lem:anosovforms}(2), implying that $\Phi$ is Anosov.

The proof of~\cite[Proposition 2.2.6]{ET} (see also~\cite[Remark 3.10]{H22a}) shows that $\xi_\pm$ is everywhere transverse to $E^{ws}$ and $E^{wu}$. By our orientation conventions, there exist two continuous functions $\sigma_s, \sigma_u :  M \rightarrow \R$ such that
\begin{align*}
    \ker\big\{e^{-\sigma_u} \alpha_- + e^{\sigma_u} \alpha_+ \big\} &= E^{ws}, \\
    \ker\big\{e^{-\sigma_s} \alpha_- -e^{\sigma_s} \alpha_+\big\} &= E^{wu}.
\end{align*}
Note that $\sigma_u$ and $\sigma_s$ are also continuously differentiable along $X$.\footnote{We cannot assume that they are $\mathcal{C}^1$ yet, since we do not know that $\Phi$ is Anosov!} Indeed, if $\overline{e}_s$ is any vector field of class $\mathcal{C}^0_X$ spanning $\overline{E}^s \subset N_X$, then $$e^{\sigma_u} \alpha_+(\overline{e}_s) + e^{-\sigma_u}\alpha_-(\overline{e}_s)=0,$$ hence $$\sigma_u = \frac{1}{2} \ln \left(-\frac{\alpha_-(\overline{e}_s)}{\alpha_+(\overline{e}_s)} \right),$$ and this quantity is continuously differentiable along $X$; the same argument applies to $\sigma_s$.

We define\footnote{The seemingly strange conformal factors will greatly simplify some computations later, in particular the inequality~\eqref{ineq:al2} in the proof of Lemma~\ref{lem:defret}.}
\begin{align*}
    \alpha_u &\coloneqq \frac{1}{2 \sqrt{\cosh(\sigma_u - \sigma_s)}} \big(e^{-\sigma_u} \alpha_- + e^{\sigma_u} \alpha_+ \big), \\
    \alpha_s &\coloneqq \frac{1}{2 \sqrt{\cosh(\sigma_u - \sigma_s)}} \big(e^{-\sigma_s} \alpha_- -e^{\sigma_s} \alpha_+\big),
\end{align*}
so that
\begin{align*}
    \ker \alpha_u &= E^{ws},\\
    \ker \alpha_s &= E^{wu},
\end{align*}
and
\begin{align}
    \alpha_- &= \frac{1}{\sqrt{\cosh(\sigma_u-\sigma_s)}}\big(e^{\sigma_s} \alpha_u + e^{\sigma_u} \alpha_s\big), \label{eq:alpha-} \\
    \alpha_+ &= \frac{1}{\sqrt{\cosh(\sigma_u-\sigma_s)}}\big(e^{-\sigma_s} \alpha_u - e^{-\sigma_u} \alpha_s\big). \label{eq:alpha+}
\end{align}
Note that $\alpha_u$ and $\alpha_s$ are continuously differentiable along $X$, and since $E^{ws}$ and $E^{wu}$ are invariant under $\Phi$, there exist continuous functions $r_s, r_u : M \rightarrow \R$ such that 
\begin{align*}
\mathcal{L}_X \alpha_s &= r_s \, \alpha_s,\\
\mathcal{L}_X \alpha_u &= r_u \, \alpha_u.
\end{align*}
Moreover, 
\begin{align*}
\alpha_- \wedge \alpha_+ = 2 \, \alpha_s \wedge \alpha_u,
\end{align*}
so $\overline{\alpha}_s \wedge \overline{\alpha}_u > 0$. We are left to show that $r_s < 0 < r_u$, which will follow from Lemma~\ref{lem:charanosov}. Let $\mathrm{dvol}$ be the unique volume form on $M$ such that $\alpha_s \wedge \alpha_u = \iota_X \, \mathrm{dvol}=: \tau$.

\begin{lem} \label{lem:approx2}
With the same notations as in Lemma~\ref{lem:charanosov}, we have
\begin{align}
    f_+ &= \frac{e^{-(\sigma_s + \sigma_u)}}{\cosh(\sigma_u-\sigma_s)} \left( X\cdot(\sigma_u-\sigma_s) + r_u - r_s\right), \label{eqsigma1}\\
    f_- &= \frac{e^{(\sigma_s + \sigma_u)}}{\cosh(\sigma_u-\sigma_s)} \left( -X\cdot(\sigma_u-\sigma_s) + r_u - r_s\right), \label{eqsigma2}\\
    f_0 &= 2 (r_u+r_s). \label{eqsigma3}
\end{align}
\end{lem}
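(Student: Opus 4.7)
The strategy is to contract each of the defining identities for $f_\pm$ and $f_0$ with $X$ and exploit that $\iota_X\alpha_s=\iota_X\alpha_u=0$, which by \eqref{eq:alpha-}--\eqref{eq:alpha+} gives $\iota_X\alpha_\pm=0$ as well. Since $\alpha_s\wedge\alpha_u=\iota_X\mathrm{dvol}$, applying $\iota_X$ to $\alpha_\pm\wedge d\alpha_\pm=\pm f_\pm\,\mathrm{dvol}$ (with the sign $-$ for $f_-$) and using Cartan's formula $\mathcal{L}_X\alpha_\pm=\iota_X d\alpha_\pm$ converts the problem into the identities
\begin{align*}
 f_+\,\alpha_s\wedge\alpha_u &= -\alpha_+\wedge\mathcal{L}_X\alpha_+, \\
 f_-\,\alpha_s\wedge\alpha_u &= \phantom{-}\alpha_-\wedge\mathcal{L}_X\alpha_-, \\
 f_0\,\alpha_s\wedge\alpha_u &= \mathcal{L}_X(\alpha_-\wedge\alpha_+).
\end{align*}

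The key computational step is to differentiate \eqref{eq:alpha-} and \eqref{eq:alpha+} along $X$. A convenient observation is that any term proportional to $\alpha_\pm$ itself in $\mathcal{L}_X\alpha_\pm$ dies upon wedging with $\alpha_\pm$, so the derivative of the scalar prefactor $1/\sqrt{\cosh(\sigma_u-\sigma_s)}$ is irrelevant for the $f_\pm$ computation. Using $\mathcal{L}_X\alpha_s=r_s\alpha_s$ and $\mathcal{L}_X\alpha_u=r_u\alpha_u$, a direct Leibniz expansion gives
$$ \mathcal{L}_X\alpha_+ \equiv \tfrac{1}{\sqrt{\cosh(\sigma_u-\sigma_s)}}\bigl[e^{-\sigma_s}(r_u - X\!\cdot\!\sigma_s)\alpha_u - e^{-\sigma_u}(r_s - X\!\cdot\!\sigma_u)\alpha_s\bigr] \pmod{\alpha_+}, $$
and an analogous expression for $\mathcal{L}_X\alpha_-$. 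Wedging with $\alpha_\pm$ and collecting the cross terms in $\alpha_s\wedge\alpha_u$ yields precisely the right-hand sides of \eqref{eqsigma1} and \eqref{eqsigma2}; the sign flip in front of $X\!\cdot\!(\sigma_u-\sigma_s)$ in \eqref{eqsigma2} comes from the opposite relative sign of $\alpha_s$ inside $\alpha_-$ versus $\alpha_+$.

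For \eqref{eqsigma3}, I would bypass the Leibniz expansion entirely: from the computation $\alpha_-\wedge\alpha_+=2\,\alpha_s\wedge\alpha_u$ noted just before the lemma, we get
$$ \mathcal{L}_X(\alpha_-\wedge\alpha_+) = 2\bigl(\mathcal{L}_X\alpha_s\wedge\alpha_u + \alpha_s\wedge\mathcal{L}_X\alpha_u\bigr) = 2(r_s+r_u)\,\alpha_s\wedge\alpha_u,$$
so $f_0=2(r_u+r_s)$.

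The main obstacle is really just bookkeeping: one has to be careful with the sign convention $\alpha_-\wedge d\alpha_-=-f_-\mathrm{dvol}$ coming from $\alpha_-$ being negative, and with tracking how the prefactor $\cosh(\sigma_u-\sigma_s)^{-1/2}$ squares up to produce the factor $\cosh(\sigma_u-\sigma_s)^{-1}$ in \eqref{eqsigma1}--\eqref{eqsigma2}. No deeper input is needed beyond $\iota_X\alpha_{s,u}=0$, the eigen-equations for $\mathcal{L}_X\alpha_{s,u}$, and the choice of $\mathrm{dvol}$.
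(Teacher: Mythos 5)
Your proposal is correct and follows essentially the same route as the paper: contract the defining identities with $X$ (using $\iota_X\alpha_\pm=0$) to replace $d\alpha_\pm$ by $\mathcal{L}_X\alpha_\pm$, then compute $\mathcal{L}_X\alpha_\pm$ from \eqref{eq:alpha-}--\eqref{eq:alpha+} by differentiating along $X$, which is exactly the computation the paper leaves to the reader. Your mod-$\alpha_\pm$ observation and the shortcut $f_0$ via $\alpha_-\wedge\alpha_+=2\,\alpha_s\wedge\alpha_u$ are just tidy ways of carrying out that calculation, and they correctly respect the fact that $\sigma_{s,u}$, $\alpha_{s,u}$ are only differentiable along $X$.
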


\begin{proof} Although $\alpha_\pm$ are smooth, the quantities $\alpha_s$, $\alpha_u$, $\sigma_s$ and $\sigma_u$ are not $\mathcal{C}^1$ so we cannot compute $d\alpha_\pm$ directly by differentiating from~\eqref{eq:alpha-} and~\eqref{eq:alpha+}. However, these quantities are differentiable along $X$ and the functions $f_0$, $f_-$ and $f_+$ can be computed from 
\begin{align*}
\alpha_+ \wedge \mathcal{L}_X \alpha_+ &= - f_+ \, \tau,\\
\alpha_- \wedge \mathcal{L}_X \alpha_- &= f_- \, \tau, \\
\mathcal{L}_X \alpha_- \wedge \alpha_+ + \alpha_- \wedge \mathcal{L}_X \alpha_+ &= f_0 \, \tau.
\end{align*}
Moreover, the quantities $\mathcal{L}_X \alpha_\pm$ \emph{can} be computed from~\eqref{eq:alpha-} and~\eqref{eq:alpha+} by differentiating along $X$. The calculations are left to the reader.
\end{proof}

Since $f_\pm >0$,~\eqref{eqsigma1} and~\eqref{eqsigma2} imply $$0 \leq \vert X\cdot \sigma \vert < r_u - r_s,$$
where $\sigma \coloneqq \sigma_u - \sigma_s$, and the inequality $f_0^2 < 4f_- f_+$ gives
$$(r_u+r_s)^2 \leq \cosh^2(\sigma)(r_u+r_s)^2 < (r_u - r_s)^2 - \big(X \cdot \sigma \big)^2 \leq (r_u-r_s)^2,$$
yielding $r_s < 0 < r_u$ as desired. This concludes the proof of Theorem~\ref{thmintro:supporting}.

\begin{rem} \label{rem:transversepair}
Similar computations (and Lemma~\ref{lem:smooth2}) show that if $\Phi$ is a non-degenerate flow on $M$, the following are equivalent:
\begin{enumerate}[label=(\arabic*)]
\item $\Phi$ is supported by a transverse Liouville pair $(\alpha_-, \alpha_+)$,
\item $\Phi$ is projectively Anosov and admits a defining pair $(\alpha_s, \alpha_u)$ with $r_u > 0$.
\end{enumerate}
Note that in case, the Reeb vector fields for the standard construction of Section~\ref{sec:std} are still transverse to the weak-unstable bundle of $\Phi$, but are not necessarily transverse to the weak-stable bundle of $\Phi$. We wish to call $\Phi$ a \textbf{semi-Anosov flow}. Our techniques would also show that the space of semi-Anosov flows on $M$ is homotopy equivalent to the space of transverse Liouville pairs on $M$.
\end{rem}

		\subsection{Volume preserving Anosov flows}
		
Volume preserving Anosov flows, i.e., Anosov flows preserving a volume form, constitute a remarkable class of Anosov flows. They are topologically transitive, in the sense that they admit a dense orbit. A deep theorem of Asaoka~\cite{A08} implies that on closed $3$-manifold, every transitive Anosov flow is topologically equivalent to a volume preserving one. In this section, we show some striking connections between volume preserving Anosov flows and Anosov Liouville pairs.

\begin{prop} \label{prop:volume}
Let $\Phi$ be a smooth non-singular flow on $M$. Then $\Phi$ is a volume preserving Anosov flow if and only if it is supported by a closed AL pair.
\end{prop}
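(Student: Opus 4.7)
The proposition is an equivalence, and the two implications require quite different treatment.

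\emph{Sufficiency.} Suppose $\Phi$ is supported by a smooth closed AL pair $(\alpha_-, \alpha_+)$. Theorem~\ref{thmintro:supporting} implies $\Phi$ is Anosov, and the construction in Section~\ref{sec:AL} produces an associated defining pair $(\alpha_s, \alpha_u)$ with expansion rates $r_s, r_u$. Equation~\eqref{eqsigma3} of Lemma~\ref{lem:approx2} reads $f_0 = 2(r_u + r_s)$; the closedness of the pair gives $f_0 = 0$, hence $r_u + r_s = 0$, and Lemma~\ref{lem:anosovforms}(3) concludes that $\Phi$ preserves a smooth volume form.

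\emph{Necessity: construction of a $\mathcal{C}^1$ model.} Conversely, assume $\Phi$ is volume-preserving Anosov, with invariant smooth volume $\mathrm{dvol}$. By Lemma~\ref{lem:anosovforms}(3) and Lemma~\ref{lem:c1}, one obtains a $\mathcal{C}^1$ defining pair $(\alpha_s, \alpha_u)$ with $r_u + r_s = 0$. A constant rescaling $\alpha_u \mapsto c \alpha_u$ (which preserves $r_u$) can be used to arrange $\alpha_s \wedge \alpha_u = \iota_X \mathrm{dvol}$: the $\mathcal{C}^1$ volume form $\mathrm{dvol}^{(0)}$ defined by $\iota_X \mathrm{dvol}^{(0)} := \alpha_s \wedge \alpha_u$ is flow-invariant, since $\mathcal{L}_X \mathrm{dvol}^{(0)} = d\iota_X \mathrm{dvol}^{(0)} = (r_u + r_s) \mathrm{dvol}^{(0)} = 0$, and by ergodicity of volume-preserving Anosov flows it differs from $\mathrm{dvol}$ by a positive constant. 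The standard construction from Section~\ref{sec:std}, $\alpha_\pm^0 := \alpha_u \pm \alpha_s$, then produces a $\mathcal{C}^1$ AL pair with
\[
\alpha_-^0 \wedge \alpha_+^0 = 2\, \alpha_s \wedge \alpha_u = 2\iota_X \mathrm{dvol},
\]
which is a smooth closed 2-form on $M$.

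\emph{Necessity: smoothing.} The heart of the argument is upgrading this $\mathcal{C}^1$ closed AL pair to a smooth one. The key observation is that the target wedge $\omega := 2\iota_X \mathrm{dvol}$ is manifestly smooth. Mollify $\alpha_-^0$ and project onto $\{\iota_X \cdot = 0\}$ using a fixed smooth 1-form $\theta$ with $\theta(X) = 1$, to obtain a smooth 1-form $\alpha_-$ annihilating $X$ and $\mathcal{C}^1$-close to $\alpha_-^0$. Let $Y_0$ be the $\mathcal{C}^1$ vector field characterized by $Y_0 \in \ker \alpha_+^0$ and $\alpha_-^0(Y_0) = 1$ (this exists by transversality of $\xi_\pm^0$), and mollify it to a smooth $\tilde Y_0$, $\mathcal{C}^1$-close to $Y_0$. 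Define
\[
\alpha_+ := \frac{1}{\alpha_-(\tilde Y_0)}\, \iota_{\tilde Y_0} \omega.
\]
Then $\alpha_+$ is smooth, $\alpha_+(X) = 0$ (because $\iota_X \omega = 0$), and both 2-forms $\alpha_- \wedge \alpha_+$ and $\omega$ annihilate $X$ and agree on all pairs $(\tilde Y_0, \cdot)$, so they are equal; hence the pair is closed by construction. Moreover, thanks to our normalization we have the identity $\alpha_+^0 = \iota_{Y_0} \omega$ (a direct pointwise computation using $\alpha_-^0(Y_0) = 1$ and $\alpha_+^0(Y_0) = 0$), so the $\mathcal{C}^1$-closeness of $\tilde Y_0$ to $Y_0$ and $\alpha_-$ to $\alpha_-^0$ gives $\mathcal{C}^1$-closeness of $(\alpha_-, \alpha_+)$ to $(\alpha_-^0, \alpha_+^0)$. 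Openness of the AL pair condition (Lemma~\ref{lem:charanosov}) finishes the proof. The main obstacle is that a naive mollification of $(\alpha_-^0, \alpha_+^0)$ breaks the closedness condition, but volume preservation is exactly what allows fixing the wedge as the smooth target $\omega = 2\iota_X \mathrm{dvol}$ and solving $\alpha_- \wedge \alpha_+ = \omega$ smoothly for $\alpha_+$ in terms of a smooth $\alpha_-$.
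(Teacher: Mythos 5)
Your proof is correct in substance, but in the direction ``volume preserving $\Rightarrow$ closed AL pair'' it takes a genuinely different and much heavier route than the paper. The paper never smooths the $\mathcal{C}^1$ standard pair: it takes an \emph{arbitrary smooth} bi-contact structure supporting $\Phi$ (already known to exist from Section~\ref{sec:std}), picks smooth contact forms $\alpha_\pm$, writes $\alpha_-\wedge\alpha_+ = \kappa\,\tau$ with $\tau=\iota_X\mathrm{dvol}$ and $\kappa>0$ smooth, and simply replaces $\alpha_+$ by $\kappa^{-1}\alpha_+$; the resulting pair is closed, and a closed pair of contact forms is \emph{automatically} an AL pair by Lemma~\ref{lem:charanosov} (since $f_0\equiv 0$). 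Your alternative — fixing the smooth target $\omega=2\iota_X\mathrm{dvol}$, smoothing $\alpha_-$, and solving $\alpha_-\wedge\alpha_+=\omega$ for $\alpha_+ = \alpha_-(\tilde Y_0)^{-1}\iota_{\tilde Y_0}\omega$ — does work: the two-contraction argument ($\iota_X$ and $\iota_{\tilde Y_0}$ of both sides agree, and $X,\tilde Y_0$ are pointwise independent) correctly yields exact closedness, and $\mathcal{C}^1$-closeness plus openness gives the AL and support conditions. What the paper's rescaling trick buys is the complete elimination of this constrained-smoothing step and of any reference to the $\mathcal{C}^1$ regularity of the weak bundles. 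In the converse direction your argument (via the defining pair of Section~\ref{sec:AL}, $f_0=2(r_u+r_s)$ from Lemma~\ref{lem:approx2}, and Lemma~\ref{lem:anosovforms}(3)) is valid; the paper instead verifies directly that $\mathcal{L}_X(\alpha_-\wedge\alpha_+\wedge\theta)=0$, which is shorter and avoids the defining-pair machinery.

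Three loose ends to tighten. First, ``by Lemma~\ref{lem:anosovforms}(3) and Lemma~\ref{lem:c1} one obtains a $\mathcal{C}^1$ defining pair with $r_u+r_s=0$'' is not immediate: Lemma~\ref{lem:anosovforms}(3) produces such a pair only in class $\mathcal{C}^0_X$, and the $\mathcal{C}^1$ upgrade of Lemma~\ref{lem:c1} multiplies $\alpha_u$ by a function $h$, which changes $r_u$ and destroys the exact identity. The fix is easy and in fact shortens your proof: start from any $\mathcal{C}^1$ defining pair, write $\alpha_s\wedge\alpha_u = g\,\iota_X\mathrm{dvol}$ with $g>0$ of class $\mathcal{C}^1$, and replace $\alpha_u$ by $g^{-1}\alpha_u$; this achieves the wedge normalization directly (the new rates are merely continuous, which is harmless) and makes the ergodicity/constant-rescaling detour unnecessary. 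Second, $Y_0$ is not ``characterized'' by $Y_0\in\ker\alpha_+^0$ and $\alpha_-^0(Y_0)=1$ — it is only determined up to adding $fX$; add a normalization such as $\theta(Y_0)=0$ (the ambiguity does not affect $\iota_{Y_0}\omega$, so nothing else changes). Third, your signs are swapped relative to the paper's conventions: the standard pair is $\alpha_-=\alpha_u+\alpha_s$, $\alpha_+=\alpha_u-\alpha_s$, whereas with $\alpha_\pm^0\coloneqq\alpha_u\pm\alpha_s$ the form $\alpha_+^0$ would be a \emph{negative} contact form and $\alpha_-^0\wedge\alpha_+^0=-2\,\alpha_s\wedge\alpha_u$.
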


\begin{proof}
Let us first assume that $\Phi$ preserves a (smooth) volume form $\mathrm{dvol}$, and let $\tau \coloneqq \iota_X \mathrm{dvol}$. Note that $\tau$ is \emph{closed}. Let $(\xi_-, \xi_+)$ be any bi-contact structure supporting $\Phi$, and $\alpha_\pm$ two contact forms such that $\ker \alpha_\pm = \xi_\pm$. There exists a smooth positive function $\kappa : M \rightarrow \R_{>0}$ such that 
$$\alpha_- \wedge \alpha_+ = \kappa \, \tau.$$ 
The positivity of $\kappa$ follows from our conventions on the coorientations of bi-contact structures. Then, $\left(\alpha_-, \frac{1}{\kappa} \alpha_+\right)$ is a closed pair as in Definition~\ref{def:closed}, and it is automatically an AL pair in view of Lemma~\ref{lem:charanosov} since the corresponding function $f_0$ vanishes.

Let us now assume that $\Phi$ is supported by a closed AL pair $(\alpha_-, \alpha_+)$. By Theorem~\ref{thmintro:supporting}, $\Phi$ is Anosov. Let $\theta$ be any smooth $1$-form on $M$ satisfying $\theta(X) \equiv 1$, where $X$ is the vector field generating $\Phi$, and define $\mathrm{dvol} \coloneqq \alpha_- \wedge \alpha_+ \wedge \theta$. It is easy to check that it is a volume form, and
\begin{align*}
\mathcal{L}_X \mathrm{dvol} &= \mathcal{L}_X(\alpha_- \wedge \alpha_+) \wedge \theta + \alpha_- \wedge \alpha_+ \wedge \mathcal{L}_X \theta \\
&=\alpha_- \wedge \alpha_+ \wedge d\theta(X, \cdot \,) \\
&=0,
\end{align*}
hence $\Phi$ preserves a smooth volume form.
\end{proof}

\begin{rem} A special class of closed AL pairs is given by Geiges pairs, defined in~\cite[Section 8.5]{MNW} as pairs of contact forms $(\alpha_-, \alpha_+)$ satisfying 
\begin{align*}
\alpha_+ \wedge d\alpha_+ &= - \alpha_- \wedge d \alpha_- > 0, \\
\alpha_+ \wedge d\alpha_- &= \alpha_- \wedge d\alpha_+ = 0.
\end{align*}
Geiges pairs are called $(-1)$-Cartan structures in~\cite{H22b}, and they are shown to be in correspondence with volume preserving Anosov flows. Here, we note that $(\alpha_-, \alpha_+)$ is a $\mathcal{C}^1$ Geiges pair if and only if $(\alpha_- - \alpha_+, \alpha_- + \alpha_+)$ is a defining pair for the underlying volume preserving Anosov flow. As a result, the space of Geiges pairs supporting a given flow is contractible. Not every (smooth) volume preserving Anosov flow is supported by a smooth (or even $\mathcal{C}^2$) Geiges pair, as it would imply that the weak-stable and weak-unstable bundles are $\mathcal{C}^2$, so the flow would be smoothly equivalent to an algebraic Anosov flow; see~\cite[Th\'eor\`eme A]{G92}.
\end{rem}

The previous proof shows more: for a volume preserving Anosov flow, \emph{any} supporting bi-contact structure can be realized as the kernel of an AL pair. Surprisingly, this is a characteristic feature of volume preserving Anosov flows.

\begin{thm} \label{thm:supporting}
Let $\Phi$ be a smooth Anosov flow on $M$. Then $\Phi$ preserves a volume form if and only if for every (smooth) supporting bi-contact structure $(\xi_-, \xi_+)$, there exists an AL pair $(\alpha_-, \alpha_+)$ such that $\xi_\pm = \ker \alpha_\pm$.
\end{thm}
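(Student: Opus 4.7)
The forward direction is already implicit in the proof of Proposition~\ref{prop:volume}: if $\Phi$ preserves a volume form $\mathrm{dvol}$ and $(\xi_-, \xi_+)$ is any supporting bi-contact structure, then rescaling any contact forms $\alpha_\pm$ for $\xi_\pm$ so that $\alpha_- \wedge \alpha_+ = \iota_X \mathrm{dvol}$ produces a closed, hence AL, pair realizing $(\xi_-, \xi_+)$. The plan for the converse is to argue by contrapositive: assuming $\Phi$ does not preserve a volume form, I will exhibit a smooth supporting bi-contact structure admitting no AL pair with the prescribed kernels.

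The approach rests on extracting an intrinsic integral obstruction from the AL inequality of Lemma~\ref{lem:charanosov}. For any AL pair $(\alpha_-, \alpha_+)$ realizing a given $(\xi_-, \xi_+)$, I compute $f_\pm$ and $f_0$ with respect to the natural volume form $\mathrm{dvol}_{\mathrm{nat}}$ determined by $\iota_X \mathrm{dvol}_{\mathrm{nat}} = \alpha_- \wedge \alpha_+$. With this choice $f_0 = \mathrm{div}_{\mathrm{dvol}_{\mathrm{nat}}}(X)$, so for every closed orbit $\gamma$ of $\Phi$,
$$
\int_\gamma f_0\, dt \;=\; \ln \det D\phi^{T_\gamma}\big\vert_{T_x M},
$$
an invariant of $\Phi$ along $\gamma$ that is independent of the choice of $(\xi_-, \xi_+)$ and $(\alpha_-, \alpha_+)$. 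A short calculation analogous to Lemma~\ref{lem:balanced} shows that under a common rescaling $\alpha_\pm \mapsto h_\pm \alpha_\pm$ the functions $f_\pm$ transform as $f_\pm \mapsto (h_\pm / h_\mp) f_\pm$, so that the product $f_- f_+$ is preserved; hence $\sqrt{f_- f_+}$ is a well-defined function on $M$ depending only on $(\xi_-, \xi_+)$. Integrating the pointwise AL inequality $|f_0| < 2\sqrt{f_- f_+}$ along $\gamma$ yields the necessary condition
$$
\bigl| \ln \det D\phi^{T_\gamma} \bigr| \;\leq\; 2 \int_\gamma \sqrt{f_- f_+}\, dt,
$$
which must hold for every closed orbit $\gamma$ whenever $(\xi_-, \xi_+)$ admits an AL pair realizing it.

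Since $\Phi$ does not preserve a volume form, Livsic's theorem applied to the smooth cocycle $\mathrm{div}(X)$ furnishes a closed orbit $\gamma_0$ with $\delta := |\ln \det D\phi^{T_{\gamma_0}}| > 0$. The construction then aims at making the right-hand side above arbitrarily small along $\gamma_0$. Starting from a $\mathcal{C}^1$ defining pair $(\alpha_s, \alpha_u)$ as in Lemma~\ref{lem:anosovforms}(2), I will consider for small $\epsilon > 0$ the modified pair
$$
\alpha_-^0 := \alpha_s + \alpha_u, \qquad \alpha_+^\epsilon := -\alpha_s + \epsilon\, \alpha_u,
$$
which defines a $\mathcal{C}^1$ supporting bi-contact structure by the identities of Section~\ref{sec:std}. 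A direct calculation using those identities yields $\sqrt{f_-^{\mathrm{nat}} f_+^{\mathrm{nat}}} = \tfrac{\sqrt{\epsilon}}{1+\epsilon}(r_u - r_s)$, hence
$$
2 \int_{\gamma_0} \sqrt{f_-^{\mathrm{nat}} f_+^{\mathrm{nat}}}\, dt \;=\; \frac{2\sqrt{\epsilon}}{1+\epsilon}\, \ln(\lambda_u/\lambda_s)_{\gamma_0} \;\xrightarrow[\epsilon \to 0^+]{}\; 0.
$$
For $\epsilon$ small enough this is strictly less than $\delta$, so the necessary condition above fails on $\gamma_0$ and no AL pair can realize the kernels $(\ker \alpha_-^0, \ker \alpha_+^\epsilon)$.

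The main technical obstacle is that the defining pair $(\alpha_s, \alpha_u)$ is only of class $\mathcal{C}^1_X$, so the bi-contact structure built above is a priori only $\mathcal{C}^1$, whereas the theorem statement concerns \emph{smooth} bi-contact structures. To remedy this I will smoothly approximate $(\alpha_s, \alpha_u)$ in the $\mathcal{C}^1_X$ topology by pairs $(\tilde\alpha_s, \tilde\alpha_u) \in \Omega^1_X \times \Omega^1_X$ using the density results of Appendix~\ref{appendixA}. For a close enough approximation the contact and orientation conditions for the smoothed pair persist (both being open in $\mathcal{C}^1$), the quantities $\tilde f_\pm$ are $\mathcal{C}^0$-close to the $\mathcal{C}^1$ ones, the intrinsic quantity $\int_{\gamma_0} \tilde f_0\, dt$ remains equal to $\ln \det D\phi^{T_{\gamma_0}}$, and the strict inequality violating the necessary condition propagates to the smooth bi-contact structure, yielding the desired example.
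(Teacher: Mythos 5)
Your argument takes a genuinely different route from the paper's. The paper proves the converse \emph{directly}: assuming every supporting bi-contact structure is realized by an AL pair, it applies this hypothesis to the skewed family $\alpha_+ = \alpha_u - e^{-A}\alpha_s$, $\alpha_- = \alpha_u + e^{A}\alpha_s$ and extracts from Lemma~\ref{lem:charanosov} the \emph{approximate coboundary} condition $\vert X\cdot h_\epsilon + r_u + r_s\vert \le \epsilon$ for every $\epsilon>0$; it then needs Appendix~\ref{appendixB} to show that this condition forces transitivity, Liv\v{s}ic's theorem to produce an exact coboundary, and~\cite{LMM} to upgrade the invariant density to a smooth volume form. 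You instead argue by contrapositive and localize the obstruction to a single closed orbit. Your key observation --- that with respect to the volume form normalized by $\iota_X\mathrm{dvol}_{\mathrm{nat}} = \alpha_-\wedge\alpha_+$ one has $f_0 = \operatorname{div}X$ while the product $f_-f_+$ depends only on the underlying cooriented bi-contact structure --- is correct, and it yields the clean necessary condition $\bigl|\ln\det D\phi^{T_\gamma}\bigr| \le 2\int_\gamma \sqrt{f_-f_+}\,dt$ for every closed orbit. The computation for the pair $(\alpha_s+\alpha_u,\,-\alpha_s+\epsilon\,\alpha_u)$ is also correct (its natural volume form is $(1+\epsilon)\,\mathrm{dvol}$ and $\sqrt{f_-f_+}=\tfrac{\sqrt{\epsilon}}{1+\epsilon}(r_u-r_s)$), it uses the same mechanism as the paper's $A\to\infty$ limit, and the smoothing step goes through because every quantity involved depends on at most one derivative of the $\mathcal{C}^1$ defining pair of Lemma~\ref{lem:c1}, while $\int_{\gamma_0}\operatorname{div}X\,dt$ is independent of the chosen volume form. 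This version is attractive in that the contradiction is reached on one periodic orbit, so you never have to actually construct the invariant volume form.

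There is, however, one genuine gap: the sentence ``Liv\v{s}ic's theorem applied to the smooth cocycle $\operatorname{div}(X)$ furnishes a closed orbit $\gamma_0$ with $\delta>0$.'' What you need is the implication ``$\int_\gamma \operatorname{div}X\,dt = 0$ for every closed orbit $\Rightarrow$ $\Phi$ preserves a volume form,'' and Liv\v{s}ic's theorem gives this only for \emph{transitive} Anosov flows. A non--volume-preserving Anosov flow need not be transitive (Franks--Williams), and for a non-transitive flow the vanishing of all periodic integrals does not by itself produce a global coboundary. This is exactly the difficulty the paper isolates: the statement you need is the Remark at the end of Appendix~\ref{appendixB}, an adaptation of~\cite{M22} using SRB measures, the entropy formula, and the spectral decomposition --- not a consequence of Liv\v{s}ic's theorem alone. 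Your proof is correct once this input is supplied, but as written the existence of $\gamma_0$ is unjustified, and you are implicitly relying on the same nontrivial dynamical fact that the paper's direct argument also requires.
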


\begin{proof}
The forward direction follows from the first part of the proof of Proposition~\ref{prop:volume}. Let us assume that every (smooth) bi-contact structure $(\xi_-, \xi_+)$ supporting $\Phi$ is defined by a (smooth) AL pair, and let us fix a defining pair $(\alpha_s, \alpha_u)$ for $\Phi$ with associated expansion rates $r_s$ and $r_u$ as in Lemma~\ref{lem:anosovforms}. Let $A>0$ be a positive real number and $\{\alpha^n_u\}_{n \in \N}$ and $\{\alpha^n_s\}_{n\in \N}$ be sequences of smooth $1$-forms converging to $\alpha_u$ and $\alpha_s$, respectively, in the $\mathcal{C}^1$ topology. For every $n \in \N$, we define
\begin{align*}
    \alpha^n_+ &\coloneqq \alpha^n_u - e^{-A} \alpha^n_s, \\
    \alpha^n_- &\coloneqq \alpha^n_u + e^A \alpha^n_s.
\end{align*}
We also let
\begin{align*}
    \alpha_+ &\coloneqq \alpha_u - e^{-A} \alpha_s, \\
    \alpha_- &\coloneqq \alpha_u + e^A \alpha_s.
\end{align*}

Then, for $n$ sufficiently large (depending on $A$), $(\alpha^n_-, \alpha^n_+)$ defines a (smooth) bi-contact structure $(\xi^n_-, \xi^n_+)$ supporting $\Phi$. By assumption, there exists a smooth positive function $f_n : M \rightarrow \R_{>0}$ such that $(\alpha^n_-, f_n \alpha^n_+)$ is an AL pair defining $(\xi^n_-, \xi^n_+)$. Lemma~\ref{lem:charanosov}  will imply the following

\medskip

\textbf{\textit{Claim.}} \textit{For every $\epsilon > 0$, there exists a smooth function $h_\epsilon : M \rightarrow \R$ such that}
\begin{align}
\vert X \cdot h_\epsilon + r_u + r_s \vert \leq \epsilon. \label{eq:cond}
\end{align}

\medskip

Assuming the Claim for now, it follows that if $\theta$ is a smooth $1$-form such that $\theta(X)\equiv 1$, then for every closed orbit $\gamma$ of $X$, $$\int_\gamma (r_u+r_s) \, \theta = 0.$$

If the flow is transitive, a classical theorem of Liv\v{s}ic implies that there exists a continuous function $h :M \rightarrow \R$ which is differentiable along $X$ and satisfies $$X \cdot h +r_u +r_s = 0.$$ Writing $\mathrm{dvol}' \coloneqq e^h \, \mathrm{dvol} = e^h \, \alpha_s \wedge \alpha_u \wedge \theta$, $\mathcal{L}_X \mathrm{dvol}' = 0$ so $\Phi$ preserves a positive continuous measure, and by~\cite[Corollary 2.1]{LMM}, this measure is a smooth volume form.

It turns out that the condition~\eqref{eq:cond} \emph{implies} that the flow is transitive. We have not been able to find a proof of this fact in the literature. We refer to Appendix~\ref{appendixB} for a proof using the theory of Sina\"{i}--Ruelle--Bowen measures.

We now prove the Claim. Let $\epsilon > 0$ and choose $A>0$ such that $$\frac{\sup_{M}(r_u - r_s)}{\cosh(A)} \leq \epsilon/3.$$ Let $e_s$ and $e_u$ be $\mathcal{C}^1$ vector fields satisfying
\begin{align*}
\alpha_s(e_s) = 1, & & \alpha_s(e_u) = 0, \\
\alpha_u(e_s) = 0, & & \alpha_u(e_u) = 1,
\end{align*}
so that $\alpha_s \wedge \alpha_u(e_s, e_u) = \mathrm{dvol}(X, e_s, e_u) = 1$. Since $(\alpha^n_-, f_n \alpha^n_+)$ is an AL pair for $n$ large enough, Lemma~\ref{lem:charanosov} implies the inequality
\begin{align*}
     \left\vert X \cdot \ln f_n + \frac{d(\alpha^n_- \wedge \alpha^n_+)(X,e_s, e_u)}{\alpha^n_- \wedge \alpha^n_+(e_s, e_u)} \right\vert < \frac{2\sqrt{-\left(\alpha^n_- \wedge d\alpha^n_-(X, e_s, e_u)\right) \cdot \left(\alpha^n_+ \wedge d\alpha^n_+(X, e_s, e_u)\right)}}{\vert\alpha^n_- \wedge \alpha^n_+(e_s, e_u)\vert}.
\end{align*}
One computes
\begin{align*}
    \lim_{n \rightarrow \infty}\,  \frac{d(\alpha^n_- \wedge \alpha^n_+)(X,e_s, e_u)}{\alpha^n_- \wedge \alpha^n_+(e_s, e_u)} &= r_u + r_s, \\
    \lim_{n \rightarrow \infty}\,  \frac{2\sqrt{-\left(\alpha^n_- \wedge d\alpha^n_-(X, e_s, e_u)\right) \cdot \left(\alpha^n_+ \wedge d\alpha^n_+(X, e_s, e_u)\right)}}{\vert\alpha^n_- \wedge \alpha^n_+(e_s, e_u)\vert} &= \frac{r_u - r_s}{\cosh(A)}, 
\end{align*}
by first replacing $\alpha^n_\pm$ by $\alpha_\pm$ and then writing these expressions in terms of $\alpha_s$ and $\alpha_u$. We obtain that for $n$ large enough such that the above two sequences are $\epsilon/3$-close to their limits,
\begin{align*}
    \vert X \cdot \ln f_n + r_u + r_s \vert \leq \frac{r_u-r_s}{\cosh(A)} + \epsilon/3 + \epsilon/3 \leq \epsilon,
\end{align*}
hence $h_\epsilon \coloneqq \ln f_n$ satisfies the required inequality and the Claim is proved.
\end{proof}

\begin{rem} The proof can be adapted to show that if every bi-contact structure supporting $\Phi$ is realized as the kernel of a Liouville pair, then the determinant of the Poincar\'e return map for every closed orbit of $\Phi$ is bigger or equal than $1$. We expect that this should also imply that $\Phi$ is volume preserving.
\end{rem}

		\section{Spaces of Anosov Liouville pairs and bi-contact structures} \label{sec:spaces}

This section is dedicated to the proof of Theorem~\ref{thmintro:homeq} from the Introduction. We first describe the main strategy in a more general setting. Let $E$ and $B$ be topological spaces and $f : E \rightarrow B$ be a continuous map. We can assume that $E$ and $B$ have the homotopy type of CW complexes. This is the case for the spaces we consider (e.g., $\mathcal{AL}$, $\mathcal{BC}$, $\mathcal{AF}$, $\mathbb{P}\mathcal{AF}$, etc.) as they are open subsets of Fr\'echet spaces.\footnote{Indeed, Fr\'{e}chet spaces are absolute neighborhood retracts (ANRs) by a theorem of Dugundji; an open subset of an ANR is an ANR; every ANR has the homotopy type of a CW complex by a theorem of Milnor and Whitehead. However, it is known that an open subset of an infinite dimensional Fr\'{e}chet space is \emph{not} a CW complex.} To show that $f$ is a homotopy equivalence, it is enough to show that it is a Serre fibration with (weakly) contractible fibers. However, it seems rather hard to show that the maps we care about (e.g., $\mathcal{I}$, $\mathbb{P}\mathcal{I}$) satisfy a homotopy lifting property, as this would require a careful understanding of how the stable and unstable bundles depend on the (projectively) Anosov flow. Instead, we choose a more indirect approach: we first show that these maps have contractible fibers, and we then show that they are \emph{topological submersions}.

\begin{defn}
    $f : E \rightarrow B$ is a \textbf{topological submersion} if it is surjective, and for every $x\in E$, there exists a neighborhood $U$ of $x$ in $E$ such that if we write $y \coloneqq f(x)$ and $V\coloneqq U \cap f^{-1}(y)$, there exists a homeomorphism $U \overset{\sim}{\longrightarrow} f(U) \times V$ making the following diagram commute:
    $$ \begin{tikzcd}[column sep=small]
    U \arrow[rr, "\sim"] \arrow[dr, "f"']
	&                         & f(U) \times V \arrow[dl, "\mathrm{pr}_1"] \\
& f(U)   & 
\end{tikzcd}$$
Here, $\mathrm{pr}_1$ denotes the projection onto the first factor.
\end{defn}

Fiber bundles are topological submersions, but the converse is not true since the product structure of topological submersions is only \emph{local on the domain} and is not ``uniform in the fibers''; see Figure~\ref{fig:topsub} below. However, we have:

\begin{lem} \label{lem:topfib}
If $f:E \rightarrow B$ is a topological submersion with (weakly) contractible fibers, then $f$ an acyclic Serre fibration.
\end{lem}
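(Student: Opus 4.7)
My plan is to prove the lemma by establishing the two properties of an acyclic Serre fibration separately. The weak homotopy equivalence part follows readily from the Serre fibration part together with the hypothesis on fibers: for any basepoint $y \in B$ and $x \in f^{-1}(y)$, the long exact sequence of homotopy groups for the Serre fibration $f$ reads
\[
\cdots \to \pi_n(f^{-1}(y), x) \to \pi_n(E, x) \xrightarrow{f_*} \pi_n(B, y) \to \pi_{n-1}(f^{-1}(y), x) \to \cdots,
\]
and the weak contractibility of the fibers will force $f_*$ to be an isomorphism on all $\pi_n$, hence $f$ will be a weak equivalence by definition.

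The bulk of the work will lie in verifying the Serre fibration property, i.e., the homotopy lifting property for cubes. Given $\alpha : I^n \to E$ and $H : I^n \times I \to B$ with $f \circ \alpha = H(\cdot, 0)$, I will build a lift $\widetilde{H}$ of $H$ extending $\alpha$ by a subdivision-and-extension argument. Since $f$ is a topological submersion and $H(I^n \times I) \subset B$ is compact, I can cover this image by finitely many open sets $W_1, \ldots, W_k$ of the form $W_i = f(U_i)$, where each $U_i \subset E$ is a product chart with $U_i \cong W_i \times V_i$ over $W_i$. Applying the Lebesgue number lemma, I will subdivide $I^n \times I$ into small cubes each of which is mapped by $H$ into some $W_i$. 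The lift $\alpha$ will then be extended inductively over cells of increasing dimension, starting from $I^n \times \{0\}$. On a cell $\sigma$ with $H(\sigma) \subset W_i$ whose already-constructed boundary lift lies inside $U_i$, the product decomposition $U_i \cong W_i \times V_i$ reduces the extension problem to extending a given map $\partial \sigma \to V_i$ to a map $\sigma \to V_i$.

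The hardest step will be precisely this fiber extension problem: the local piece $V_i$ of a fiber need not be weakly contractible even though the full fibers of $f$ are. My strategy for handling this will be to refine the subdivision and to choose the product charts through the already-constructed lifts carefully, so that the boundary lift of each cell is small enough to fit inside a single chart and so that the extension problem becomes solvable up to a small homotopy within the full fiber. The weak contractibility of fibers will provide the null-homotopies needed to fill in the extensions, and a partition-of-unity-type argument will stitch the local extensions together while keeping them compatible across overlapping product charts. This delicate local-to-global gluing, which exploits both the topological submersion structure on the domain and the global contractibility of fibers, is the technical heart of the proof.
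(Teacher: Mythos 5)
Your overall skeleton is fine (the long exact sequence argument reducing the acyclicity to the Serre fibration property is standard and correct), but the heart of the proof has a genuine gap, and it is exactly at the step you yourself flag as hardest. After subdividing $I^n\times I$ so that each small cell maps into a chart $W_i$ with $U_i\cong W_i\times V_i$, the extension problem over a cell is a map $\partial\sigma\to V_i$ to be extended over $\sigma$, and the local fiber piece $V_i$ carries no contractibility whatsoever: as the paper emphasizes, the product structure of a topological submersion is local on the domain and not ``uniform in the fibers,'' so $V_i$ can have arbitrary topology no matter how fine the subdivision. Your proposed remedy --- refine the subdivision, choose charts through the constructed lift, and stitch local extensions by a ``partition-of-unity-type argument'' --- does not work in this generality: the fibers are bare topological spaces (and even in the paper's application, spaces of AL pairs supporting a fixed flow, they are weakly contractible but not convex), so there is no linear or convex structure with which to average competing extensions, and nothing forces extensions built in overlapping charts to agree. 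Making the boundary lift ``small'' does not make the null-homotopy in the full fiber stay inside a single chart, so the global contractibility of the fiber cannot be invoked cell by cell without a further idea.

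What you are implicitly trying to reprove is precisely the nontrivial lemma the paper quotes: a Serre micro-fibration with weakly contractible fibers is an acyclic Serre fibration (\cite[Corollary 13]{M02}, \cite[Lemma 2.2]{W05}). The paper's route is: a topological submersion is an open map, openness being local; an open map that is locally a projection is a homotopic submersion/Serre micro-fibration (\cite[Lemma 6]{M02}); then the cited micro-fibration lemma concludes. The proof of that lemma is itself a delicate argument (one lifts over a slightly shrunken cube using the micro-lifting property and then uses fiberwise weak contractibility to correct and extend), and it is this argument --- not a subdivision plus partition-of-unity gluing --- that would have to be supplied to make your approach complete. As written, your proposal either needs to reproduce that argument or to cite it; without it, the key extension/gluing step fails.
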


\begin{proof}
    Since projections are open and openness is a local property, $f$ is open. By~\cite[Lemma 6]{M02}, $f$ is a \emph{homotopic submersion} (see~\cite[Definition 1]{M02}), also known as a \emph{Serre micro-fibration}~\cite{Gr86}. The result then follows from~\cite[Corollary 13]{M02} (see also~\cite[Lemma 2.2]{W05}).
\end{proof}

\begin{figure}[t]
    \begin{center}
        \begin{picture}(90, 86)(0,0)
        \put(0,0){\includegraphics[width=90mm]{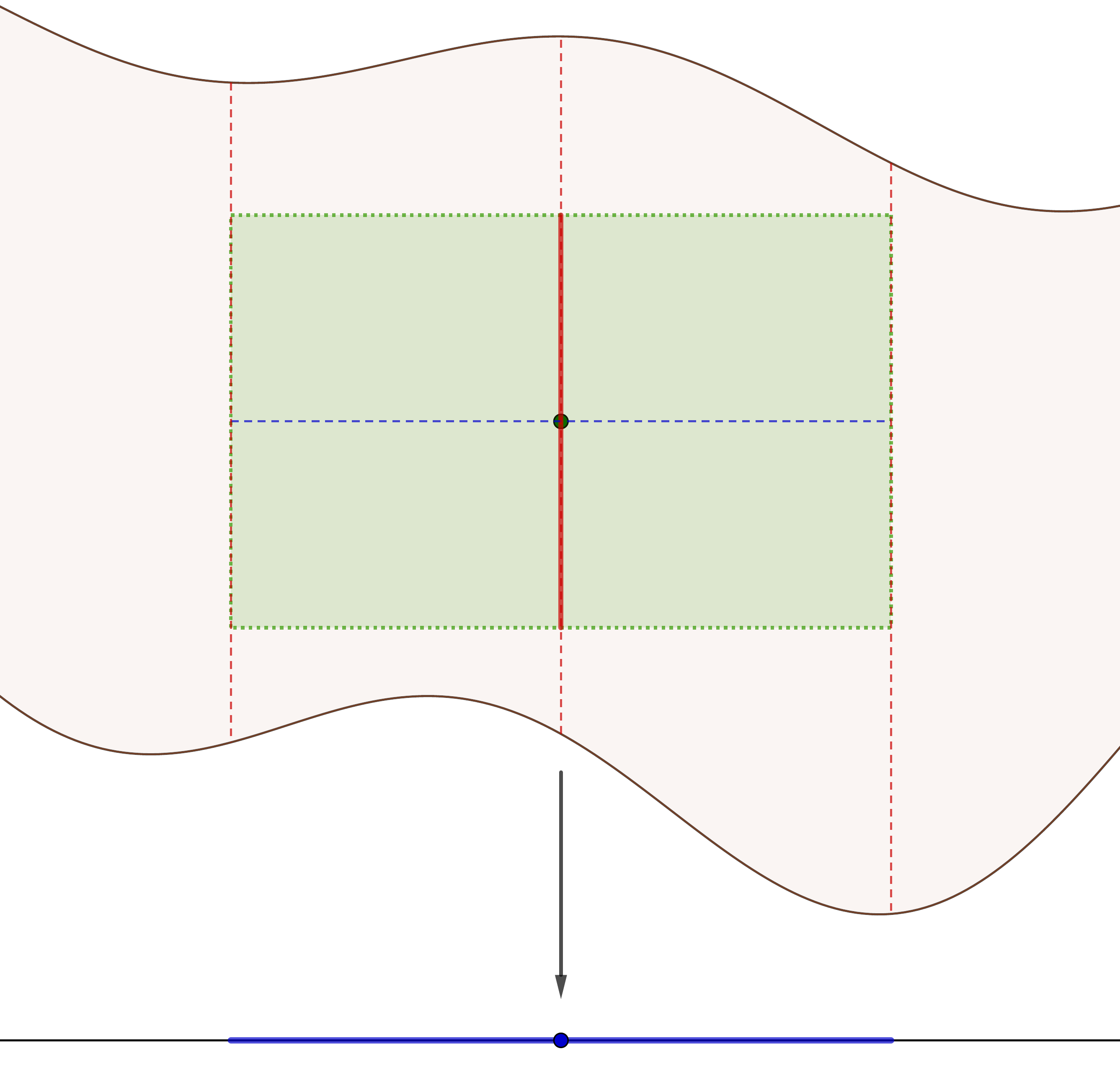}}
        \put(41.5,14){$f$}
        \put(8,5){$B$}
        \put(8,61){$E$}
        \color{blue1}
        \put(27,5){$f(U)$}
        \put(46.5,5){$y$}
        \color{red1}
        \put(46.5,61){$V$}
        \color{green1}
        \put(46.5,54){$x$}
        \put(30,61){$U$}
        \end{picture}
        \caption{A topological submersion.}
        \label{fig:topsub}
    \end{center}
\end{figure}

		\subsection{Contractibility of fibers} \label{section:contract}

In this section, we show:

\begin{thm} \label{thm:contract} Let $\Phi$ be a smooth Anosov flow on $M$. The spaces of AL pairs and weak AL pairs supporting $\Phi$ are both contractible.
\end{thm}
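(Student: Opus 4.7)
The plan is to parameterize the fiber $(\mathcal{I}^w)^{-1}(\Phi)$ (and $\mathcal{I}^{-1}(\Phi)$) by a pair of auxiliary functions $(\sigma_s, \sigma_u)$ satisfying an inequality that is star-shaped from the origin. I would fix a defining pair $(\alpha_s, \alpha_u) \in \mathcal{D}_\Phi$ for $\Phi$, which is of class $\mathcal{C}^1$ by Lemma~\ref{lem:c1}. The construction of Section~\ref{sec:AL} then sets up a bijection between (weak) AL pairs supporting $\Phi$ and pairs $(\sigma_s, \sigma_u)$ through formulas~\eqref{eq:alpha-}--\eqref{eq:alpha+}, with the choice $(\sigma_s, \sigma_u) = (0,0)$ corresponding to the standard AL pair $(\alpha_u + \alpha_s, \alpha_u - \alpha_s)$ of Section~\ref{sec:std}.

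Next, I would apply Lemma~\ref{lem:approx2} together with Lemma~\ref{lem:charanosov} to translate the AL pair condition into the single inequality
\begin{equation*}
(r_u + r_s)^2 \cosh^2(\sigma_u - \sigma_s) + \bigl(X \cdot (\sigma_u - \sigma_s)\bigr)^2 < (r_u - r_s)^2.
\end{equation*}
For any $t \in [0,1]$, the substitution $(\sigma_s, \sigma_u) \mapsto (t\sigma_s, t\sigma_u)$ only decreases both terms of the left-hand side (since $\cosh$ is even and monotone on $\R_{\geq 0}$), and at $t = 0$ the inequality reduces to $(r_u + r_s)^2 < (r_u - r_s)^2$, which holds precisely because $r_s < 0 < r_u$. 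This star-shaped property produces a straight-line retraction $(\sigma_s, \sigma_u) \mapsto (t\sigma_s, t\sigma_u)$ of the fiber onto the standard AL pair attached to $(\alpha_s, \alpha_u)$. Composing with a contraction of $\mathcal{AL}^{\mathrm{std}}_\Phi$, which is contractible via the linear homeomorphism with $\mathcal{D}_\Phi$ and Lemma~\ref{lem:contract}, would yield a contraction of the whole fiber. The weak AL pair case is treated analogously, using the weaker Liouville-pair condition of Remark~\ref{rem:liouv} instead of the full AL pair condition; the resulting inequality is again star-shaped from the origin.

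The main obstacle is a regularity mismatch. The functions $(\sigma_s, \sigma_u)$ associated with a \emph{smooth} AL pair are only $\mathcal{C}^1$ in general, since the weak-stable and weak-unstable bundles of $\Phi$ are only $\mathcal{C}^1$ (not smooth) by Lemma~\ref{lem:c1}. Consequently, both the intermediate AL pairs along the straight-line homotopy and the target standard AL pair a priori lie only in the $\mathcal{C}^1$ category. To conclude contractibility within the smooth category, I would combine each stage of the retraction with a simultaneous $\mathcal{C}^1$-small smoothing, using the openness of the smooth AL pair condition as noted after Lemma~\ref{lem:c1} in Section~\ref{sec:std}: any $\mathcal{C}^1$-small smoothing of a $\mathcal{C}^1$ AL pair supporting $\Phi$ and annihilating $X$ is automatically a smooth AL pair supporting $\Phi$. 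Equivalently, one could first establish that the inclusion of smooth AL pairs into $\mathcal{C}^1$ AL pairs supporting $\Phi$ is a weak homotopy equivalence via a parametric smoothing argument, and then perform the star-shaped retraction in the $\mathcal{C}^1$ category.
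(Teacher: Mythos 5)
Your proposal follows essentially the same route as the paper: the paper's proof is the combination of Lemma~\ref{lem:heq} (smooth versus $\mathcal{C}^1$ AL pairs), Lemma~\ref{lem:defret} (the straight-line retraction $(\sigma_s,\sigma_u)\mapsto\big((1-t)\sigma_s,(1-t)\sigma_u\big)$ in the formulas~\eqref{eq:alpha-}--\eqref{eq:alpha+}, justified by exactly your star-shaped inequality, which is the paper's~\eqref{ineq:al2}), and Lemma~\ref{lem:contract} (contractibility of $\mathcal{D}_\Phi\cong\mathcal{AL}^{\mathrm{std}}_\Phi$ by convexity), with the weak AL case handled by relaxing $r_s<0$ as you indicate. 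Your second option for the regularity issue is the one the paper takes: Lemma~\ref{lem:heq} shows the inclusion of smooth AL pairs into $\mathcal{C}^1$ ones is a homotopy equivalence, using density plus a local criterion (Dold) with convex $\mathcal{C}^1$-balls, rather than a stage-by-stage smoothing of the retraction, which would be harder to make continuous.

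One imprecision in your setup should be corrected: it is not true that, for a \emph{fixed} defining pair $(\alpha_s,\alpha_u)\in\mathcal{D}_\Phi$, the formulas~\eqref{eq:alpha-}--\eqref{eq:alpha+} give a bijection between AL pairs supporting $\Phi$ and pairs of functions $(\sigma_s,\sigma_u)$. Given an AL pair, the construction of Section~\ref{sec:AL} produces \emph{both} the functions $(\sigma_s,\sigma_u)$ \emph{and} a defining pair $(\alpha_s,\alpha_u)$ depending on the AL pair; e.g.\ all standard AL pairs have $(\sigma_s,\sigma_u)=(0,0)$ but different defining pairs, so the fiber is parameterized by $\mathcal{D}_\Phi$-data together with $(\sigma_s,\sigma_u)$ (this is the content of the continuous maps $\mathcal{S}$ and $\mathcal{D}$ in the paper's proof of Lemma~\ref{lem:defret}). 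With the parameterization stated this way, your argument is the paper's: the scaling of $(\sigma_s,\sigma_u)$ is a strong deformation retraction of the $\mathcal{C}^1$ fiber onto $\mathcal{AL}^{\mathrm{std}}_\Phi$ (not onto a point), and it is precisely because the defining pair varies that your final step, composing with the contraction of $\mathcal{AL}^{\mathrm{std}}_\Phi\cong\mathcal{D}_\Phi$, is needed; as written, that step would be redundant under your claimed bijection, which signals the slip. One should also record, as the paper does, that $(\alpha_-,\alpha_+)\mapsto(\sigma_s,\sigma_u,\alpha_s,\alpha_u)$ is continuous in the $\mathcal{C}^1$ topology so that the retraction is continuous.
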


We also show a similar result for projectively Anosov flows:

\begin{thm} \label{thm:projcontract} Let $\Phi$ be a smooth projectively Anosov flow on $M$. The space of bi-contact structures supporting $\Phi$ is contractible.
\end{thm}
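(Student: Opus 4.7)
The plan is to fix a defining pair for $\Phi$, use it to parametrize $\mathcal{BC}_\Phi$ by a pair of angle functions on $M$, and contract this parametrization onto the standard bi-contact structure via an explicit convexity-in-$\epsilon$ argument.

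First, I pick $(\alpha_s, \alpha_u) \in \mathcal{D}_\Phi$ of class $\mathcal{C}^0_X$ as in Lemma~\ref{lem:anosovforms}(1), so that $\ker\alpha_u = E^{ws}$ and $\ker\alpha_s = E^{wu}$. For any $(\xi_-, \xi_+) \in \mathcal{BC}_\Phi$, the planes $\xi_\pm$ are transverse to both $E^{ws}$ and $E^{wu}$ (by the argument from \cite[Proposition 2.2.6]{ET} recalled at the start of Section~\ref{sec:AL}), so each $\xi_\pm$ admits a $\mathcal{C}^0_X$ defining $1$-form $\alpha_\pm = \cos\theta_\pm \, \alpha_s + \sin\theta_\pm \, \alpha_u$, unique up to a positive $\mathcal{C}^0_X$ rescaling, for some angle function $\theta_\pm \colon M \to S^1$. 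A short computation using the identities $\alpha_s \wedge d\alpha_u = -r_u\,\mathrm{dvol}$, $\alpha_u \wedge d\alpha_s = r_s\,\mathrm{dvol}$ and $\alpha_s \wedge d\alpha_s = \alpha_u \wedge d\alpha_u = 0$ from Section~\ref{sec:std} gives
\[ \alpha_\pm \wedge d\alpha_\pm = -\bigl[X(\theta_\pm) + \tfrac{1}{2}(r_u - r_s)\sin(2\theta_\pm)\bigr]\mathrm{dvol}, \]
reducing the positive/negative contact conditions to open differential inequalities on $\theta_\pm$. The orientation conventions of Definition~\ref{def:bicont} combined with transversality to $E^{ws}, E^{wu}$ restrict $(\theta_-, \theta_+)$ to the ``easy'' open arcs containing the standard values $\theta_+^{\mathrm{std}} = 3\pi/4$ and $\theta_-^{\mathrm{std}} = \pi/4$, which correspond to the standard bi-contact structure $\xi_\pm^{\mathrm{std}} \coloneqq \ker(\alpha_u \mp \alpha_s)$ from Section~\ref{sec:std}.

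Second, I construct the deformation retraction. For admissible $(\theta_-, \theta_+)$, set $\psi_\pm \coloneqq \theta_\pm - \theta_\pm^{\mathrm{std}}$ (pointwise in $(-\pi/4, \pi/4)$) and take the straight-line homotopy $\theta_\pm^t = \theta_\pm^{\mathrm{std}} + (1-t)\psi_\pm$. Using $2\theta_+^{\mathrm{std}} = 3\pi/2$ and $2\theta_-^{\mathrm{std}} = \pi/2$, the contact inequalities along the path reduce to the positivity of the one-variable function
\[ G_\pm(\epsilon) = \tfrac{1}{2}(r_u - r_s)\cos(2\epsilon\psi_\pm) \mp \epsilon X(\theta_\pm), \quad \epsilon = 1-t \in [0,1]. \]
Since $|2\epsilon\psi_\pm| < \pi/2$ throughout the homotopy, $\cos(2\epsilon\psi_\pm) > 0$ and $G_\pm'' < 0$, i.e., $G_\pm$ is concave on $[0,1]$. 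At the endpoints, $G_\pm(0) = \tfrac{1}{2}(r_u - r_s) > 0$ (the standard case) and $G_\pm(1) > 0$ (the given contact hypothesis), so $G_\pm > 0$ throughout and the homotopy is admissible. This yields a $\mathcal{C}^0_X$-continuous deformation retraction of the parameter space onto the standard bi-contact structure.

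Third, the parametrization and contraction above take place in $\mathcal{C}^0_X$ regularity, whereas $\mathcal{BC}_\Phi$ consists of smooth bi-contact structures. Here I invoke the density results of Appendix~\ref{appendixA} to smoothly approximate $\mathcal{C}^0_X$ angle functions while preserving the open bi-contact inequalities, and to promote the $\mathcal{C}^0_X$ contraction to a smooth one on $\mathcal{BC}_\Phi$. This is precisely the use of Appendix~\ref{appendixA} flagged in the remark following Definition~\ref{def:reg}. The main obstacle is twofold: carefully tracking the orientation conventions of Definition~\ref{def:bicont} to confirm that $(\theta_-, \theta_+)$ is forced into the easy arcs where the concavity argument applies, and promoting the $\mathcal{C}^0_X$ contraction to a smooth one, which requires the approximation machinery of Appendix~\ref{appendixA} since the chosen defining pair and the resulting standard bi-contact structure are only $\mathcal{C}^0_X$ in general.
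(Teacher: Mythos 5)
Your argument is correct, and the explicit homotopy you use is genuinely different from the paper's, even though the overall architecture (pass to $\mathcal{C}^0_X$ regularity, contract there by an explicit formula, return to the smooth category by density) is the same. The paper fixes only $\alpha_u$ (normalized by $\alpha_u(e_u)\equiv 1$), extracts from each $(\xi_-,\xi_+)$ a pair $(\alpha_s,\sigma)$ via $\alpha_\mp=\alpha_u\pm e^{\pm\sigma}\alpha_s$, reduces the contact conditions to the single inequality $\vert X\cdot\sigma\vert<r_u-r_s$, retracts by $\sigma\mapsto(1-t)\sigma$ (where positivity along the path is immediate by monotonicity), and then separately contracts the resulting space $\mathcal{BC}^{\mathrm{std}}_\Phi$ of standard structures. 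You instead fix an entire defining pair, parametrize by two independent angles $\theta_\pm$ confined to arcs of length $\pi/2$ around $\pi/4$ and $3\pi/4$, and contract to a single point in one step; the price is that positivity along the straight-line path is no longer monotone in $\epsilon$ and requires your concavity-of-$G_\pm$ argument, which I checked and which is correct (your formula for $\alpha_\pm\wedge d\alpha_\pm$, interpreted via $\iota_X(\alpha\wedge d\alpha)=-\alpha\wedge\mathcal{L}_X\alpha$ as in Lemma~\ref{lem:approx2}, is right, and transversality $\overline{\alpha}_-\wedge\overline{\alpha}_+=\sin(\theta_+-\theta_-)\,\overline{\alpha}_s\wedge\overline{\alpha}_u>0$ persists since $\theta_+^t-\theta_-^t\in(0,\pi)$ throughout). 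The one place you are loose is the final step: one does not literally ``promote the $\mathcal{C}^0_X$ contraction to a smooth one''; rather, as in Lemma~\ref{lem:heq}, one shows the inclusion $\mathcal{BC}_\Phi\hookrightarrow\mathcal{BC}^0_\Phi$ is a homotopy equivalence using the density result of Lemma~\ref{lem:smooth2} together with a cover by convex $\mathcal{C}^0_X$-balls, and then contractibility of $\mathcal{BC}^0_\Phi$ transfers to $\mathcal{BC}_\Phi$. With that mechanism spelled out, your proof is complete.
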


We obtain a version for volume preserving Anosov flows as well:

\begin{thm} \label{thm:volcontract}
Let $\Phi$ be a smooth volume preserving Anosov flow on $M$. The space of closed AL pairs supporting $\Phi$ is contractible.
\end{thm}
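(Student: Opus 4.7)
The approach parallels the proof of Theorem~\ref{thm:contract}, specialized to preserve the extra condition $d(\alpha_-\wedge\alpha_+)=0$. The central observation is that, by Lemma~\ref{lem:approx2}, the closedness of an AL pair is equivalent to $f_0=0$, which in turn is equivalent to $r_u+r_s=0$ for the intrinsic expansion rates of the defining pair $(\alpha_s,\alpha_u)$ produced by the procedure of Section~\ref{sec:AL}. Thus closed AL pairs supporting $\Phi$ correspond precisely to defining pairs for $\Phi$ viewed as a volume preserving Anosov flow in the sense of Lemma~\ref{lem:anosovforms}(3), together with auxiliary conformal data $(\sigma_s,\sigma_u)$.

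The first step is to recall that Lemma~\ref{lem:contract} already gives contractibility of $\mathcal{D}_\Phi$ for volume preserving $\Phi$: the space of pairs $(\rho_s,\rho_u)$ of class $\mathcal{C}^0_X$ satisfying the convex constraints $X\cdot(\rho_u-\rho_s)+r_u-r_s>0$ and $X\cdot(\rho_u+\rho_s)=0$ is contractible. Next, I would revisit the standard construction of Section~\ref{sec:std}: starting from a $\mathcal{C}^1$ defining pair $(\alpha_s,\alpha_u)\in\mathcal{D}_\Phi$ with $r_u+r_s=0$, the identities
\begin{align*}
\alpha_\pm\wedge d\alpha_\pm &= \pm(r_u-r_s)\,\mathrm{dvol}, \\
d(\alpha_-\wedge\alpha_+) &= 2(r_u+r_s)\,\mathrm{dvol}
\end{align*}
show that $\alpha_\pm\coloneqq\alpha_u\mp\alpha_s$ is a $\mathcal{C}^1$ \emph{closed} AL pair, and the closedness is preserved under any sufficiently $\mathcal{C}^1$-small perturbation that keeps the 2-form $\alpha_-\wedge\alpha_+$ in its cohomology class along the flow. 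This produces a continuous map from $\mathcal{D}_\Phi$ (with the volume-preserving constraint) into the space of smooth closed AL pairs supporting $\Phi$, after applying the standard smoothing.

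The main step is to construct a deformation retract of the space of closed AL pairs supporting $\Phi$ onto this family of (smoothings of) standard pairs. Given a closed AL pair $(\alpha_-,\alpha_+)$, the procedure of Section~\ref{sec:AL} extracts $(\alpha_s,\alpha_u)$ and $(\sigma_s,\sigma_u)$ of class $\mathcal{C}^0_X$; one then follows the contraction used in Theorem~\ref{thm:contract} that interpolates $(\sigma_s,\sigma_u)$ linearly to $(0,0)$ via the explicit formulae~\eqref{eq:alpha-} and~\eqref{eq:alpha+}. The essential point is that along such a homotopy the extracted defining pair $(\alpha_s,\alpha_u)$ and its intrinsic rates $r_s,r_u$ are unchanged, so $f_0=2(r_u+r_s)=0$ throughout, and closedness of the AL pair is preserved at every time.

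The main obstacle, as in Theorem~\ref{thm:contract}, is regularity: the extracted $(\alpha_s,\alpha_u,\sigma_s,\sigma_u)$ are only of class $\mathcal{C}^0_X$, while the AL pairs in the homotopy must remain smooth. I would handle this exactly as in the proof of Theorem~\ref{thm:contract}, appealing to the density/smoothing results of Appendix~\ref{appendixA}; the additional point to check is that the smoothing can be performed within closed AL pairs. Since the closedness condition $d(\alpha_-\wedge\alpha_+)=0$ is a closed linear condition and the smoothing parameter enters linearly in $(\alpha_-,\alpha_+)$, a standard convolution-in-the-flow-direction argument preserves closedness, so the final contraction stays inside the space of smooth closed AL pairs supporting $\Phi$. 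Combined with the contractibility of $\mathcal{D}_\Phi$ in the volume preserving case, this yields the desired conclusion.
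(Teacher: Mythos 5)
Your reduction to defining pairs is sound at the $\mathcal{C}^1$ level, and it is a genuinely different route from the paper's. The key computation you implicitly rely on does work: along the retraction of Lemma~\ref{lem:defret} one has $\alpha^t_- \wedge \alpha^t_+ = 2\,\alpha_s \wedge \alpha_u$ for \emph{every} $t$, so the $2$-form $\alpha_-\wedge\alpha_+$ is literally unchanged and closedness is preserved; moreover closedness of a standard pair is equivalent to $r_u+r_s=0$ for the extracted defining pair, and Lemma~\ref{lem:contract} gives contractibility of that space. So your argument does show that the space of \emph{$\mathcal{C}^1$} closed AL pairs supporting $\Phi$ is contractible.

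The gap is in the passage from $\mathcal{C}^1$ to smooth, and your proposed fix does not work. The condition $d(\alpha_-\wedge\alpha_+)=0$ is \emph{quadratic} in the pair $(\alpha_-,\alpha_+)$, not linear: smoothing (or convolving) $\alpha_-$ and $\alpha_+$ separately gives no control on $d(\alpha_-\wedge\alpha_+)$, and the set of closed pairs is neither open nor convex in $\Omega^1\times\Omega^1$ (for two closed pairs $(\alpha_-,\alpha_+)$ and $(\alpha'_-,\alpha'_+)$ the cross terms $d(\alpha_-\wedge\alpha'_+)+d(\alpha'_-\wedge\alpha_+)$ have no reason to vanish). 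Consequently the convex-ball, local-to-global argument of Lemma~\ref{lem:heq} breaks down for closed pairs, and the analogue of the density statement "smooth closed AL pairs are dense in $\mathcal{C}^1$ closed AL pairs" requires a different mechanism. The paper avoids this entirely: writing $\alpha_-\wedge\alpha_+=\kappa\,\tau$ with $\tau=\iota_X\mathrm{dvol}$, closedness forces $X\cdot\kappa=0$, hence $\kappa$ constant by transitivity; after normalizing, the kernel map from closed AL pairs with $\alpha_-\wedge\alpha_+=\tau$ to $\mathcal{BC}_\Phi$ is a homeomorphism (surjectivity is Proposition~\ref{prop:volume}, injectivity comes from the normalization), and Theorem~\ref{thm:projcontract} concludes. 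If you want to salvage your route, replace the convolution step by exactly this normalization: approximate the underlying bi-contact structure by a smooth one and reimpose $\alpha_-\wedge\alpha_+=\tau$ to land back in the space of smooth closed pairs — but at that point you have essentially reproduced the paper's proof.
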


If $\Phi$ is a smooth Anosov flow on $M$,
\begin{itemize}
\item $\mathcal{AL}_\Phi$ denotes the space of smooth AL pairs on $M$ supporting $\Phi$, endowed with the $\mathcal{C}^\infty$ topology,
\item $\mathcal{AL}^1_\Phi$ denotes the space of $\mathcal{C}^1$ AL pairs on $M$ supporting $\Phi$, endowed with the $\mathcal{C}^1$ topology.
\end{itemize}
Smooth AL pairs supporting $\Phi$ form a dense subset of $\mathcal{AL}^1_\Phi$. Recall that  $\mathcal{D}_\Phi$ denotes the space of $\mathcal{C}^1$ defining pairs on $M$ for $\Phi$, and $\mathcal{AL}^{\mathrm{std}}_\Phi \subset \mathcal{AL}^1_\Phi$ denotes the space of $\mathcal{C}^1$ standard AL pairs supporting $\Phi$, both endowed with the $\mathcal{C}^1$ topology. Theorem~\ref{thm:contract} will be a consequence of the contractibility of $\mathcal{AL}^{\mathrm{std}}_\Phi $ and the following two lemmas.

\begin{lem} \label{lem:heq}
The natural map $\mathcal{AL}_\Phi \rightarrow \mathcal{AL}^1_\Phi$ is a homotopy equivalence.
\end{lem}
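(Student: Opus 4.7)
The plan is to prove that the inclusion $i: \mathcal{AL}_\Phi \hookrightarrow \mathcal{AL}^1_\Phi$ is a weak homotopy equivalence via a parametric smoothing (mollification) argument. Since $\mathcal{AL}^1_\Phi$ is an open subset of a Banach space and $\mathcal{AL}_\Phi$ is an open subset of a Fr\'echet space, both have the homotopy type of CW complexes (as explained at the beginning of Section~\ref{sec:spaces}), so a weak equivalence upgrades to an honest homotopy equivalence by Whitehead's theorem.

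The first step is to set up the smoothing machinery. Fix a smooth background metric on $M$ and a smooth $1$-form $\theta$ with $\theta(X) \equiv 1$. Let $\{S_\epsilon\}_{\epsilon \geq 0}$ be a continuous family of linear smoothing operators on $\Omega^1(M)$ (for instance convolution with mollifiers in local charts patched by a partition of unity, or the heat semigroup for the Hodge Laplacian) such that $S_0 = \mathrm{Id}$, $S_\epsilon(\Omega^1_{\mathcal{C}^1}) \subset \Omega^1_{\mathcal{C}^\infty}$ for $\epsilon > 0$, $S_\epsilon \to \mathrm{Id}$ in the $\mathcal{C}^1$-operator norm as $\epsilon \to 0$, and for each fixed $\epsilon > 0$ the operator $S_\epsilon : \Omega^1_{\mathcal{C}^1} \to \Omega^1_{\mathcal{C}^\infty}$ is continuous. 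To preserve the supporting condition $\iota_X \alpha_\pm = 0$, define $T_\epsilon(\alpha) := S_\epsilon \alpha - (S_\epsilon \alpha)(X)\,\theta$, applied componentwise to pairs. Then $T_0$ is the identity on forms annihilating $X$, and $T_\epsilon$ still takes $\mathcal{C}^1$ input to smooth output for $\epsilon > 0$.

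Given a continuous map of pairs $f : (D^n, \partial D^n) \to (\mathcal{AL}^1_\Phi, \mathcal{AL}_\Phi)$, one wants a homotopy rel $\partial D^n$ to a map landing in $\mathcal{AL}_\Phi$. By compactness of $f(D^n)$ and $\mathcal{C}^1$-openness of the AL pair conditions (Lemma~\ref{lem:charanosov}), there is $\epsilon_0 > 0$ such that $T_\epsilon(f(k)) \in \mathcal{AL}^1_\Phi$ for every $(k,\epsilon) \in D^n \times [0,\epsilon_0]$. Choose a smooth cutoff $\rho : D^n \to [0,1]$ with $\rho|_{\partial D^n} = 0$ and $\rho \equiv 1$ outside a small collar, and set
\[
F(k,s) := T_{s\epsilon_0\rho(k)}\bigl(f(k)\bigr).
\]
Then $F(\cdot,0) = f$, $F(\cdot,s)$ takes values in $\mathcal{AL}^1_\Phi$, $F$ is stationary on $\partial D^n$ (where $\rho = 0$), and $F(\cdot,1)$ lies in $\mathcal{AL}_\Phi$ away from the collar. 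Doing this for all $n$ yields bijectivity of $\pi_n(\mathcal{AL}_\Phi) \to \pi_n(\mathcal{AL}^1_\Phi)$.

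The main technical obstacle will be the $\mathcal{C}^\infty$-continuity of $F(\cdot,1) : D^n \to \mathcal{AL}_\Phi$ at points of $\partial D^n$, where the mollification parameter degenerates: the operator norms $\|S_\epsilon\|_{\mathcal{C}^1 \to \mathcal{C}^k}$ blow up as $\epsilon \to 0$, so $\mathcal{C}^\infty$-convergence $T_{\epsilon_0 \rho(k)}(f(k)) \to f(k_0)$ as $k \to k_0 \in \partial D^n$ is not automatic and requires coupling the vanishing rate of $\rho$ to the $\mathcal{C}^1$-modulus of continuity of $f$ near $\partial D^n$. This is handled in the standard Palais-type way (shrinking $\epsilon_0$ and thickening the collar adaptively to $f$). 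Alternatively, one can first apply a global $T_{\epsilon_0}$ to produce a smooth map $\tilde{f}$ together with a short $\mathcal{C}^1$-homotopy from $f$ to $\tilde f$ inside $\mathcal{AL}^1_\Phi$; since $f|_{\partial D^n}$ is already smooth, $\tilde{f}|_{\partial D^n}$ is automatically $\mathcal{C}^\infty$-close to it, and one closes up with a convex combination inside $\mathcal{AL}_\Phi$, valid by openness.
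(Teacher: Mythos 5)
Your route is genuinely different from the paper's and, in its second (``alternative'') form, it works. The paper smooths nothing parametrically: it invokes the localization theorem of Dold~\cite[Theorem 1]{D71} --- a map is a homotopy equivalence if it is one over a numerable open cover stable under finite intersections --- applied to a cover of $\mathcal{AL}^1_\Phi$ by small $\mathcal{C}^1$-balls; each such ball is convex, and its preimage (its intersection with $\mathcal{AL}_\Phi$) is convex and non-empty because smooth supporting AL pairs are dense in $\mathcal{AL}^1_\Phi$, so both are contractible and the map is a homotopy equivalence locally, hence globally. You instead kill the relative homotopy groups directly by mollification and then apply Whitehead; in effect you re-prove the density statement in parametric form. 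What the paper's argument buys is that it never meets the degeneration of the smoothing parameter along $\partial D^n$, which is exactly the ``main technical obstacle'' of your first construction; what your argument buys is a hands-on proof independent of Dold's theorem.

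Two caveats. First, the hypothesis ``$S_\epsilon \to \mathrm{Id}$ in the $\mathcal{C}^1$-operator norm'' cannot be arranged: each $S_\epsilon$ is a compact (smoothing) operator on the infinite-dimensional Banach space of $\mathcal{C}^1$ forms, and the compact operators are closed in operator norm, so they cannot converge in norm to the identity. What is true, and all your argument uses, is strong convergence which is uniform on the compact set $f(D^n)\subset \mathcal{C}^1$. Second, your first construction with the cutoff $\rho$ indeed does not close up on its own, for the reason you flag ($\mathcal{C}^\infty$-continuity of $F(\cdot,1)$ at $\partial D^n$); but your alternative already suffices, and more simply than you state: setting $\tilde{f} \coloneqq T_{\epsilon_0}\circ f$ with $\epsilon_0$ small enough that $\tilde{f}(k)$ is uniformly $\mathcal{C}^1$-close to $f(k)$, the straight-line homotopy $(1-s)f + s\tilde{f}$ stays in $\mathcal{AL}^1_\Phi$ by $\mathcal{C}^1$-openness of the conditions of Lemma~\ref{lem:charanosov} (the constraint $\alpha_\pm(X)=0$ being linear and preserved by $T_\epsilon$), while on $\partial D^n$ it interpolates between two smooth pairs, hence stays in $\mathcal{AL}_\Phi$, and it is continuous into the $\mathcal{C}^\infty$ topology there since $T_{\epsilon_0}:\mathcal{C}^1\to\mathcal{C}^\infty$ is continuous. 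This is already a homotopy of pairs compressing $f$ into $\mathcal{AL}_\Phi$, so no collar, no convex closing, and no Palais-type adaptation of $\rho$ to the modulus of continuity of $f$ are needed. With these repairs your proof is complete and is a legitimate alternative to the paper's.
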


\begin{proof} This follows from some standard facts in algebraic topology. We will use that homotopy equivalences are \emph{local} in the following sense:

\medskip

\textit{\textbf{Fact.}} \textit{A continuous map $f: X \rightarrow Y$ between topological spaces is a homotopy equivalence if there exists a numerable open cover $\mathcal{U}$ of $Y$ satisfying
\begin{enumerate}[label=(\arabic*)]
\item $\mathcal{U}$ is stable under finite intersections,
\item For every $U \in \mathcal{U}$, $f : f^{-1}(U) \rightarrow U$ is a homotopy equivalence.
\end{enumerate}
}

\medskip

See~\cite[Theorem 1]{D71} for a proof of this fact. Recall that an open cover is \emph{numerable} if it admits a subordinate partition of unity. In our context, covers are automatically numerable since all the spaces under consideration are metrizable. We can simply cover $\mathcal{AL}^1_\Phi$ by sufficiently small open $\mathcal{C}^1$ balls and refine this cover by taking all possible finite intersections. These balls are convex as subsets of the space of pairs of $\mathcal{C}^1$ $1$-forms on $M$, and so are finite intersections thereof, so all of the open subsets in our cover are contractible. Since smooth AL pairs supporting $\Phi$ are dense in $\mathcal{AL}^1_\Phi$, every open $\mathcal{C}^1$ ball in $\mathcal{AL}^1_\Phi$ intersects $\mathcal{AL}_\Phi$. The intersection of such a ball with $\mathcal{AL}_\Phi$ is also convex as a subset of $\Omega^1(M) \times \Omega^1(M)$, and so are finite intersections of such balls with $\mathcal{AL}_\Phi$. Thus, the conditions (1) and (2) of the Fact are trivially satisfied.
\end{proof}

\begin{lem} \label{lem:defret}
$\mathcal{AL}^{\mathrm{std}}_\Phi$ is a strong deformation retract of $\mathcal{AL}^1_\Phi$.\end{lem}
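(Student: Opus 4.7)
The plan is to construct an explicit strong deformation retract by interpolating the ``conformal parameters'' $\sigma_s, \sigma_u$ from Section~\ref{sec:AL} linearly to zero. Since $\Phi$ is a smooth Anosov flow, Lemma~\ref{lem:c1} ensures that $E^{ws}, E^{wu}$ are $\mathcal{C}^1$, so the assignment of Section~\ref{sec:AL} produces data $(\alpha_s, \alpha_u, \sigma_s, \sigma_u)$ that are $\mathcal{C}^1$ and depend $\mathcal{C}^1$-continuously on $(\alpha_-, \alpha_+) \in \mathcal{AL}^1_\Phi$, by choosing a fixed $\mathcal{C}^1$ frame of $\overline{E}^s \oplus \overline{E}^u$ and using~\eqref{eq:alpha-}--\eqref{eq:alpha+} as a $\mathcal{C}^1$-algebraic inversion formula.

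Given such data, I would define the homotopy by
\begin{align*}
\alpha^t_- &\coloneqq \frac{1}{\sqrt{\cosh(t(\sigma_u-\sigma_s))}}\big(e^{t\sigma_s}\alpha_u + e^{t\sigma_u}\alpha_s\big),\\
\alpha^t_+ &\coloneqq \frac{1}{\sqrt{\cosh(t(\sigma_u-\sigma_s))}}\big(e^{-t\sigma_s}\alpha_u - e^{-t\sigma_u}\alpha_s\big),
\end{align*}
and $H_t(\alpha_-,\alpha_+) \coloneqq (\alpha^t_-,\alpha^t_+)$. By comparison with~\eqref{eq:alpha-}--\eqref{eq:alpha+}, $H_1$ is the identity, while $H_0(\alpha_-, \alpha_+) = (\alpha_u+\alpha_s,\,\alpha_u-\alpha_s)$ is the standard AL pair attached to the defining pair $(\alpha_s, \alpha_u) \in \mathcal{D}_\Phi$. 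A short algebraic check using~\eqref{eq:alpha-}--\eqref{eq:alpha+} shows that if $(\alpha_-, \alpha_+)$ already belongs to $\mathcal{AL}^{\mathrm{std}}_\Phi$, then necessarily $\sigma_s = \sigma_u \equiv 0$, so $H_t$ fixes $\mathcal{AL}^{\mathrm{std}}_\Phi$ pointwise, as required for a \emph{strong} deformation retract.

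The technical heart will be verifying that each $(\alpha^t_-, \alpha^t_+)$ stays in $\mathcal{AL}^1_\Phi$. The condition $\iota_X \alpha^t_\pm = 0$ is automatic since $\iota_X \alpha_{s,u} = 0$. The formulas~\eqref{eqsigma1}--\eqref{eqsigma3} of Lemma~\ref{lem:approx2} apply with $\sigma_{s,u}$ replaced by $t\sigma_{s,u}$ (using $X \cdot (t\sigma_{s,u}) = t\, X \cdot \sigma_{s,u}$), giving
\begin{align*}
f^t_\pm &= \frac{e^{\mp t(\sigma_s+\sigma_u)}}{\cosh(t(\sigma_u-\sigma_s))}\big(\pm t\, X\cdot(\sigma_u-\sigma_s) + r_u - r_s\big),\\
f^t_0 &= 2(r_u + r_s).
\end{align*}
Positivity of $f^t_\pm$ reduces to positivity of an affine function of $t$, which holds at $t=0$ (equal to $r_u - r_s > 0$) and at $t=1$ (by the AL assumption on $(\alpha_-, \alpha_+)$), hence on all of $[0,1]$. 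The Liouville inequality $(f^t_0)^2 < 4 f^t_- f^t_+$ will simplify to
\[
(r_u+r_s)^2 \cosh^2\!\big(t(\sigma_u - \sigma_s)\big) + t^2 \big(X\cdot(\sigma_u - \sigma_s)\big)^2 < (r_u-r_s)^2,
\]
whose left-hand side is monotone non-decreasing in $t \in [0,1]$, so the $t=1$ case (the AL condition on $(\alpha_-, \alpha_+)$ via Lemma~\ref{lem:charanosov}) implies the inequality throughout. The main obstacle I anticipate is the bookkeeping needed to guarantee that $(\alpha_-, \alpha_+) \mapsto (\alpha_s, \alpha_u, \sigma_s, \sigma_u)$ is $\mathcal{C}^1$-continuous as a map to $\mathcal{C}^1(M)^4$; once that is set up via Lemma~\ref{lem:c1}, continuity of $H$ in both variables follows from the purely algebraic form of the formulas.
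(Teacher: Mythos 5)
Your proposal is correct and follows essentially the same route as the paper: recover $(\alpha_s,\alpha_u,\sigma_s,\sigma_u)$ continuously in the $\mathcal{C}^1$ topology via a fixed $\mathcal{C}^1$ frame, scale the conformal parameters linearly (the paper uses $(1-t)\sigma_{s,u}$, you use $t\sigma_{s,u}$, which is just the time-reversed homotopy), and verify the AL condition along the way with Lemma~\ref{lem:approx2} and the monotonicity in $t$ of the left-hand side of the inequality from Lemma~\ref{lem:charanosov}. The only cosmetic difference is your separate affine-in-$t$ check of $f^t_\pm>0$, which the paper absorbs into the single inequality~\eqref{ineq:al2}.
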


\begin{proof}
Let $(\alpha_-, \alpha_+) \in \mathcal{AL}^1_\Phi$. As in Section~\ref{sec:AL}, there exist functions $\sigma_s, \sigma_u : M \rightarrow \R$ satisfying
\begin{align*}
    \ker\big\{e^{-\sigma_u} \alpha_- + e^{\sigma_u} \alpha_+ \big\} &= E^{ws}, \\
    \ker\big\{e^{-\sigma_s} \alpha_- -e^{\sigma_s} \alpha_+\big\} &= E^{wu}.
\end{align*}
If $e_s$ and $e_u$ are $\mathcal{C}^1$ vector fields such that $\overline{e}_s$ spans $\overline{E}^s$ and $\overline{e}_u$ spans $\overline{E}^u$, we can write
\begin{align*}
\sigma_s = \frac{1}{2} \ln \left(  \frac{\alpha_+(e_u)}{\alpha_-(e_u)} \right), \qquad \sigma_u = \frac{1}{2} \ln \left( - \frac{\alpha_-(e_s)}{\alpha_+(e_s)} \right), 
\end{align*}
so $\sigma_s$ and $\sigma_u$ are $\mathcal{C}^1$. Moreover, the map $\mathcal{S}: (\alpha_-, \alpha_+) \mapsto (\sigma_s, \sigma_u)$ is continuous in the $\mathcal{C}^1$ topology. As before, we define
\begin{align*}
    \alpha_u &\coloneqq \frac{1}{2 \sqrt{\cosh(\sigma_u - \sigma_s)}} \big(e^{-\sigma_u} \alpha_- + e^{\sigma_u} \alpha_+ \big), \\
    \alpha_s &\coloneqq \frac{1}{2 \sqrt{\cosh(\sigma_u - \sigma_s)}} \big(e^{-\sigma_s} \alpha_- -e^{\sigma_s} \alpha_+\big).
\end{align*}
The computations of Section~\ref{sec:AL} show that $(\alpha_s, \alpha_u) \in \mathcal{D}_\Phi$. Therefore, we obtain a continuous map $\mathcal{D}: (\alpha_-, \alpha_+) \mapsto (\alpha_s, \alpha_u)$. We now define a strong deformation retraction $r : \mathcal{AL}^1_\Phi \times [0,1] \rightarrow \mathcal{AL}^1_\Phi$. Let $(\alpha_-, \alpha_+) \in \mathcal{AL}^1_\Phi $, and $(\sigma_s, \sigma_u) = \mathcal{S}(\alpha_-, \alpha_+)$ and $(\alpha_s, \alpha_u) = \mathcal{D} (\alpha_-, \alpha_+)$ as before. For $t \in [0,1]$, we define
\begin{align*}
    \alpha^t_- &\coloneqq \frac{1}{\sqrt{\cosh((1-t)\sigma)}}\big(e^{(1-t)\sigma_s} \alpha_u + e^{(1-t)\sigma_u} \alpha_s\big), \\
    \alpha^t_+ &\coloneqq \frac{1}{\sqrt{\cosh((1-t)\sigma)}}\big(e^{-(1-t)\sigma_s} \alpha_u - e^{-(1-t)\sigma_u} \alpha_s\big),
\end{align*}
where $\sigma = \sigma_u - \sigma_s$. We then have
\begin{itemize}
\item $\big(\alpha^0_-, \alpha^0_+\big) = (\alpha_-, \alpha_+)$,
\item $\big(\alpha^1_-, \alpha^1_+\big) =  \big( \alpha_u + \alpha_s, \alpha_u - \alpha_s \big) \in \mathcal{AL}^\mathrm{std}_\Phi$,
\item If $(\alpha_-, \alpha_+) \in \mathcal{AL}^{\mathrm{std}}_\Phi$, then $\big(\alpha^t_-, \alpha^t_+\big) = (\alpha_-, \alpha_+)$ for every $t \in [0, 1]$.
\end{itemize}
 We claim that $\big(\alpha^t_-, \alpha^t_+\big)$ is an AL pair for every $t \in [0,1]$. Indeed, by Lemma~\ref{lem:charanosov} and Lemma~\ref{lem:approx2}, it is enough to show the inequality
\begin{align} \label{ineq:al2}
\cosh^2 \big((1-t)\sigma \big)(r_u + r_s )^2 + (1-t)^2 \big(X\cdot \sigma\big)^2 < (r_u-r_s)^2.
\end{align}
It holds for $t=0$ since $(\alpha_-, \alpha_+)$ is an AL pair, and the left-hand side is obviously a non-increasing function of $t$, so it holds for every $t \in [0,1]$.

We finally define $r\big((\alpha_-, \alpha_+), t\big) \coloneqq \big(\alpha^t_-, \alpha^t_+\big)$ so that $r : \mathcal{AL}^1_\Phi \times [0,1] \rightarrow \mathcal{AL}^1_\Phi$ is continuous, and by the three bullets above, it is a strong deformation retraction of $\mathcal{AL}^1_\Phi$ onto $\mathcal{AL}^{\mathrm{std}}_\Phi$.
\end{proof}

\begin{proof}[Proof of Theorem~\ref{thm:contract}] For AL pairs, combine Lemma~\ref{lem:contract}, Lemma~\ref{lem:heq} and Lemma~\ref{lem:defret}. For weak AL pairs, the argument can be modified as follows. Lemma~\ref{lem:heq} can be easily adapted to the case of weak AL pairs. Lemma~\ref{lem:defret} can be adapted to show that the space of $\mathcal{C}^1$ weak AL pairs supporting a smooth Anosov flow $\Phi$ deformation retracts onto the space of pairs of the form $\alpha_\pm = \alpha_u \mp \alpha_s$, where $(\alpha_s, \alpha_u)$ satisfies the conditions of a defining pair for $\Phi$ \emph{without the condition $r_s < 0$} (but still satisfies $r_s < r_u$ and $0<r_u$, see Remark~\ref{rem:transversepair}). The latter space is convex, hence contractible.
\end{proof}

\begin{proof}[Proof of Theorem~\ref{thm:projcontract}] We only sketch how to modify the proof of Theorem~\ref{thm:contract} and we leave the details to the interested reader.

First of all, we shall introduce the space of $\mathcal{C}^0_X$ bi-contact structures. Those are continuous pairs of codimension $1$ distributions $(\xi_-, \xi_+)$ which are continuously differentiable along $X$ and which are defined by some $1$-forms $\alpha_-, \alpha_+ \in \Omega^0_X$ satisfying
\begin{align} \label{eq:weakcont}
\overline{\alpha}_- \wedge \overline{\alpha}_+ > 0, & &\overline{\alpha}_- \wedge \mathcal{L}_X \overline{\alpha}_- < 0, & &\overline{\alpha}_+ \wedge \mathcal{L}_X \overline{\alpha}_+ > 0,
\end{align}
as forms on $N_X$. 

For the purpose of the proof, we choose an arbitrary $\mathcal{C}^0_X$ vector field $e_u$ such that $\overline{e}_u$ spans $\overline{E}^u \subset N_X$ and defines the prescribed orientation. This is equivalent to choosing a $1$-form $\alpha_u \in \Omega^1_{X,0}$ such that $\mathrm{ker} \alpha_u = E^{ws}$ as oriented $2$-plane fields, with the normalization $\alpha_u(e_u) \equiv 1$.

We denote by $\mathcal{BC}_\Phi$ (resp.~$\mathcal{BC}^0_\Phi$) the space of smooth (resp.~$\mathcal{C}^0_X$) bi-contact structures supporting $\Phi$. We write $\mathcal{BC}^\mathrm{std}_\Phi \subset \mathcal{BC}^0_\Phi$ for the space of \emph{standard} bi-contact structures supporting $\Phi$, of the form $\big(\ker \big( \alpha_u + \alpha_s \big), \ker \left( \alpha_u - \alpha_s \right) \big)$, where $\alpha_u$ is fixed by the condition $\alpha_u(e_u) \equiv 1$.

Lemma~\ref{lem:heq} can be easily adapted to show that the natural map $\mathcal{BC}_\Phi \rightarrow \mathcal{BC}^0_\Phi$ is a homotopy equivalence, using Lemma~\ref{lem:smooth2}.

Lemma~\ref{lem:defret} can be modified as follows. For $(\xi_-, \xi_+) \in \mathcal{BC}^0_\Phi$, we denote by $(\alpha_-, \alpha_+)$ the unique pair of $1$-forms in $\Omega^0_X$ satisfying $\ker \alpha_\pm = \xi_\pm$ and $\alpha_\pm(e_u) = 1$. We define a $\mathcal{C}^0_X$ function $\sigma : M \rightarrow \R$ by $$\ker\big\{e^{-\sigma} \alpha_- + e^{\sigma} \alpha_+ \big\} = E^{ws},$$  so that $$\alpha_u = \frac{1}{2 \cosh(\sigma)}\big( e^{-\sigma} \alpha_- + e^{\sigma} \alpha_+\big),$$ and we define $$\alpha_s \coloneqq \frac{1}{2 \cosh(\sigma)} \big(\alpha_- - \alpha_+ \big)$$ so that $\alpha_s \in \Omega^1_{X,0}$, and $$\mathrm{ker} \alpha_s = E^{wu}.$$ This readily implies that 
\begin{align*}
\alpha_- &= \alpha_u + e^{\sigma} \alpha_s, \\
\alpha_+ &= \alpha_u - e^{-\sigma} \alpha_s.
\end{align*}
Writing $\mathcal{L}_X \alpha_u = r_u \alpha_u$ and $\mathcal{L}_X \alpha_s = r_s \alpha_s$, where $r_s, r_u : M \rightarrow \R$ are continuous, the last two inequalities in~\eqref{eq:weakcont} are equivalent to $$\vert X \cdot \sigma \vert < r_u - r_s.$$ Moreover, $$\alpha_- \wedge \alpha_+ = 2 \cosh(\sigma) \alpha_s \wedge \alpha_u.$$ This shows that $(\alpha_s, \alpha_u)$ is a defining pair for $\Phi$ that satisfies $\alpha_u(e_u) \equiv 1$. For $t \in [0,1]$, we define
\begin{align*}
\alpha^t_- &\coloneqq \alpha_u + e^{(1-t) \sigma} \alpha_s, & &\xi^t_- \coloneqq \ker \alpha^t_-\\
\alpha^t_+ &\coloneqq \alpha_u - e^{-(1-t) \sigma} \alpha_s, & & \xi^t_+ \coloneqq \ker \alpha^t_+.
\end{align*}
These formulae define a strong deformation retraction of $\mathcal{BC}^0_\Phi$ onto $\mathcal{BC}^\mathrm{std}_\Phi$. Moreover, $\mathcal{BC}^\mathrm{std}_\Phi$ is homeomorphic to the space of defining pairs $(\alpha_s, \alpha_u)$ for $\Phi$ satisfying $\alpha_u(e_u) \equiv 1$, and one easily checks that this space is contractible.
\end{proof}

\begin{proof}[Proof of Theorem~\ref{thm:volcontract}] The result essentially follows from Theorem~\ref{thm:projcontract}. Let $\mathrm{dvol}$ be a smooth volume form preserved by $\Phi$ and $\tau \coloneqq \iota_X \mathrm{dvol}$. If $(\alpha_-, \alpha_+)$ is a closed AL pair supporting $\Phi$, there exists a smooth positive function $\kappa \coloneqq M \rightarrow \R_{>0}$ such that $$\alpha_- \wedge \alpha_+ = \kappa \, \tau.$$ Moreover, $X \cdot \kappa = 0$ and since $\Phi$ is topologically transitive, $\kappa$ is constant. One easily checks that the space of closed AL pairs supporting $\Phi$ is homotopy equivalent to the space of balanced pairs of contact forms $(\alpha_-, \alpha_+)$ supporting $\Phi$ and satisfying $\alpha_- \wedge \alpha_+ = \tau$. We denote this space by $\mathcal{BC}^\tau_\Phi$. There is a natural continuous map $\mathcal{K} : \mathcal{BC}^\tau_\Phi \rightarrow \mathcal{BC}_\Phi$, obtained by taking kernels, which is surjective by Theorem~\ref{thm:supporting}. One easily checks that $\mathcal{K}$ is injective and that it is a homeomorphism. Theorem~\ref{thm:projcontract} finishes the proof.
\end{proof}

		\subsection{Homotopy equivalences}

Let us recall some notations.
\begin{itemize}
\item $\mathcal{AL}$ denotes the space of smooth AL pairs on $M$,
\item $\mathcal{AF}$ denotes the space of smooth Anosov flows on $M$, up to positive time reparametrization,
\item $\mathbb{P} \mathcal{AF}$ denotes the space of smooth projectively Anosov flows on $M$,  up to positive time reparametrization.
\end{itemize}

Recall that there is a continuous map
$$
\begin{array}{rccc}
\mathcal{I} : & \mathcal{AL} & \longrightarrow & \mathcal{AF} \\
		   & (\alpha_-, \alpha_+) & \longmapsto & \ker \alpha_- \cap \ker \alpha_+
\end{array} $$
where we identify an oriented $1$-dimensional distribution with any flow spanned by it. Similarly, there is a continuous map
$$
\begin{array}{rccc}
\mathbb{P}\mathcal{I} : & \mathcal{BC} & \longrightarrow & \mathbb{P}\mathcal{AF} \\
		   & (\xi_-, \xi_+) & \longmapsto & \xi_- \cap \xi_+
\end{array} $$

In this section, we show the main theorems of this article:

\begin{thm} \label{thm:homeq} The map $\mathcal{I}$ is an acyclic Serre fibration.
\end{thm}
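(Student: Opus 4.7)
The plan is to invoke Lemma~\ref{lem:topfib}. The contractibility of the fibers $\mathcal{I}^{-1}(\Phi) = \mathcal{AL}_\Phi$ is already given by Theorem~\ref{thm:contract}, so the entire task reduces to verifying that $\mathcal{I}$ is a topological submersion. Indeed, this is the reason the section was framed around the notion of topological submersion in the first place: constructing an actual Serre fibration structure would require lifting homotopies of Anosov flows to homotopies of AL pairs, which in turn seems to demand $\mathcal{C}^1$-control on the stable/unstable bundles along a family—something hard to arrange. Topological submersion only asks for a local product structure near each point, which can be produced by a purely algebraic modification.

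To build this local product structure, fix $(\alpha_-, \alpha_+) \in \mathcal{AL}$ and let $L_0 \coloneqq \ker \alpha_- \cap \ker \alpha_+$ with spanning vector field $X_0$. Choose a smooth $1$-form $\theta$ with $\theta(X_0) \equiv 1$; this $\theta$ is still nowhere vanishing on any oriented line field $L$ sufficiently $\mathcal{C}^\infty$-close to $L_0$, so we may pick a continuous family $L \mapsto X_L$ of spanning vector fields with $\theta(X_L) \equiv 1$. For $L$ close to $L_0$ and a pair $(\beta_-, \beta_+)$ close to $(\alpha_-, \alpha_+)$ which still supports $L_0$, define
\[
\Psi_L(\beta_-, \beta_+) \coloneqq \left( \beta_- - \beta_-(X_L)\,\theta, \ \beta_+ - \beta_+(X_L)\,\theta\right).
\]
By construction the resulting pair annihilates $X_L$, hence supports $L$; by the openness of the AL condition, the output lies in $\mathcal{AL}$ whenever $L$ is close to $L_0$ and the input is close to $(\alpha_-, \alpha_+)$. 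The analogous formula $\Psi_{L_0}$ sends an AL pair supporting some nearby $L$ back to an AL pair supporting $L_0$, and a direct computation shows $\Psi_{L_0} \circ \Psi_L = \mathrm{id}$ on pairs supporting $L_0$ (and symmetrically), so $\Psi_L$ and $\Psi_{L_0}$ are mutually inverse local homeomorphisms.

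Packaging this together, let $W$ be a small neighborhood of $L_0$ in $\mathcal{AF}$ and $F$ a small neighborhood of $(\alpha_-, \alpha_+)$ in the fiber $\mathcal{AL}_{L_0} = \mathcal{I}^{-1}(L_0)$. The map
\[
W \times F \longrightarrow \mathcal{AL}, \qquad (L, (\beta_-, \beta_+)) \longmapsto \Psi_L(\beta_-, \beta_+)
\]
is a homeomorphism onto a neighborhood $U$ of $(\alpha_-, \alpha_+)$ in $\mathcal{AL}$, with inverse $(\gamma_-, \gamma_+) \mapsto (\mathcal{I}(\gamma_-,\gamma_+), \Psi_{L_0}(\gamma_-,\gamma_+))$. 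This identification intertwines $\mathcal{I}|_U$ with the projection $\mathrm{pr}_1 : W \times F \to W$, establishing that $\mathcal{I}$ is a topological submersion. Combined with Theorem~\ref{thm:contract}, Lemma~\ref{lem:topfib} immediately yields the conclusion.

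The main point of friction in executing this plan will be bookkeeping between the different models of $\mathcal{AF}$ (oriented line fields versus vector fields modulo positive reparametrization), which is needed to make the family $L \mapsto X_L$ genuinely continuous with values in $\mathcal{C}^\infty$ vector fields; once a smooth transverse $\theta$ with $\theta(X_0) \equiv 1$ is fixed and $L$ is restricted to a $\mathcal{C}^\infty$-neighborhood of $L_0$ on which $\theta$ remains transverse, this is automatic. The rest is routine algebra and an appeal to the openness of AL pairs.
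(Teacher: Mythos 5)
Your proposal is correct and follows essentially the same route as the paper: the paper's Lemma~\ref{lem:topsub} constructs exactly this local trivialization via $\alpha_\pm \mapsto \alpha_\pm - \frac{\alpha_\pm(X')}{\theta(X')}\,\theta$ (your normalization $\theta(X_L)\equiv 1$ merely absorbs the denominator), and then concludes by combining Theorem~\ref{thm:contract} with Lemma~\ref{lem:topfib}. The only point you leave implicit is the surjectivity of $\mathcal{I}$, required by the definition of a topological submersion, which is supplied by Theorem~\ref{thmintro:supporting}.
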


Our argument can easily be adapted to the case of projectively Anosov flows (this result might already be known to some experts):

\begin{thm} \label{thm:homeqproj} The map $\mathbb{P} \mathcal{I}$ is an acyclic Serre fibration.
\end{thm}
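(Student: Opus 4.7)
The plan is to follow the general strategy developed at the start of Section~\ref{sec:spaces}: by Lemma~\ref{lem:topfib}, it suffices to show that $\mathbb{P}\mathcal{I}$ is a topological submersion with weakly contractible fibers. The fiber contractibility is exactly Theorem~\ref{thm:projcontract}, so the only remaining task is to exhibit the required local product structure at each point of $\mathcal{BC}$.

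Fix $(\xi_-, \xi_+) \in \mathcal{BC}$, let $\Phi = \mathbb{P}\mathcal{I}(\xi_-, \xi_+)$, and pick a smooth vector field $X \in \xi_- \cap \xi_+$ generating $\Phi$. I would then choose a smooth $1$-form $\theta$ on $M$ with $\theta(X) \equiv 1$, and set $N = \ker\theta$, a smooth $2$-plane distribution transverse to $X$. Take an open neighborhood $U$ of $(\xi_-, \xi_+)$ in $\mathcal{BC}$ small enough that for every $(\xi'_-, \xi'_+) \in U$ the oriented line field $L' = \xi'_- \cap \xi'_+$ is transverse to $N$ (so that there is a unique positively oriented vector field $X'$ spanning $L'$ with $\theta(X') \equiv 1$), the line fields $\ell'_\pm \coloneqq \xi'_\pm \cap N$ are well-defined, and moreover the pair of $2$-plane fields $(\ell'_- \oplus \langle X \rangle, \, \ell'_+ \oplus \langle X \rangle)$ still defines a bi-contact structure, now supporting $\Phi$. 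All of these conditions being open, they can be arranged uniformly on a sufficiently small $U$.

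Define
$$
\Psi \colon U \longrightarrow \mathbb{P}\mathcal{I}(U) \times V, \qquad (\xi'_-, \xi'_+) \longmapsto \bigl( L', \, (\ell'_- \oplus \langle X \rangle, \, \ell'_+ \oplus \langle X \rangle) \bigr),
$$
where $V = U \cap \mathbb{P}\mathcal{I}^{-1}(\Phi)$. Its inverse sends $\bigl(L', (\xi^0_-, \xi^0_+)\bigr)$ to $(\ell^0_- \oplus L', \, \ell^0_+ \oplus L')$ with $\ell^0_\pm \coloneqq \xi^0_\pm \cap N$. Both maps are $\mathcal{C}^\infty$-continuous since intersection with the fixed distribution $N$ and direct sum with a continuously varying oriented line field are smooth operations. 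To see that $\mathbb{P}\mathcal{I}(U)$ is genuinely a neighborhood of $\Phi$, so that $\Psi$ does land on a product, I would use the same trick as in the openness proof for Anosov vector fields earlier in the paper: given any $\Phi'$ close to $\Phi$ with generator $X'$, the adjusted forms $\alpha'_\pm \coloneqq \alpha_\pm - \frac{\alpha_\pm(X')}{\theta(X')}\theta$ (for any contact forms $\alpha_\pm$ defining $\xi_\pm$) produce a bi-contact structure in $U$ whose $\mathbb{P}\mathcal{I}$-image is $\Phi'$. This makes $\Psi$ a homeomorphism onto a product, so $\mathbb{P}\mathcal{I}$ is a topological submersion, and Lemma~\ref{lem:topfib} concludes.

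The main technical point is the verification that replacing $\langle X' \rangle$ by $\langle X \rangle$ in the direct sums $\ell'_\pm \oplus \langle X' \rangle$ preserves the bi-contact condition: a priori, this operation could destroy transversality of the resulting $2$-plane fields or the positive/negative contact condition. Since all the conditions defining a bi-contact structure are open in the $\mathcal{C}^\infty$ topology on pairs of $2$-plane fields, this obstruction is controlled by a quantitative but elementary $\mathcal{C}^\infty$ estimate, and is handled by shrinking $U$ once to accommodate all constraints simultaneously. The bulk of the work thus consists in this uniform openness bookkeeping; the homotopical content is entirely packaged into Theorem~\ref{thm:projcontract} and Lemma~\ref{lem:topfib}.
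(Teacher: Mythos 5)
Your proof is correct and follows essentially the same route as the paper: reduce via Lemma~\ref{lem:topfib} and Theorem~\ref{thm:projcontract} to showing that $\mathbb{P}\mathcal{I}$ is a topological submersion, then trivialize locally by re-pointing each nearby bi-contact structure along the fixed flow direction. Your plane-field-level map $(\xi'_-,\xi'_+)\mapsto\bigl(\ell'_-\oplus\langle X\rangle,\,\ell'_+\oplus\langle X\rangle\bigr)$ is precisely the kernel of the form-level trivialization $\alpha_\pm\mapsto\alpha_\pm-\frac{\alpha_\pm(X')}{\theta(X')}\,\theta$ used in Lemma~\ref{lem:topsub} (the paper merely normalizes contact forms against an auxiliary vector field $Z$ so that that lemma applies verbatim), so the two constructions coincide.
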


\begin{rem} With more work, it is possible to show that $\mathcal{I}$ is \textbf{shrinkable}: it is homotopy equivalent over $\mathcal{AF}$ to $\mathrm{id} :  \mathcal{AF} \rightarrow \mathcal{AF}$. Concretely, this means that there exists a section $s$ of $\mathcal{I}$ such that $s \circ \mathcal{I}$ is \emph{fiberwise homotopic} to $\mathrm{id}$. This implies that the space sections of $\mathcal{I}$ is non-empty and contractible. To prove this statement, one would need to upgrade the results of Section~\ref{section:contract} to hold \emph{in family over $\mathcal{AF}$}. A key ingredient is~\cite[Lemma 2.1]{LMM}, which shows that for smooth Anosov flows, the Anosov splitting depends continuously on the flow. We are not aware of a similar result for projectively Anosov flows.
\end{rem}

We will need the following 
\begin{lem} \label{lem:topsub}
$\mathcal{I}$ is a topological submersion.
\end{lem}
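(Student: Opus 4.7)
The plan is to construct explicit local product trivializations for $\mathcal{I}$ by adapting the $\theta$-modification trick used to prove openness of Anosov flows. Fix $(\alpha_-, \alpha_+) \in \mathcal{AL}$ with $\Phi \coloneqq \mathcal{I}(\alpha_-, \alpha_+)$, a representative vector field $X$ of $\Phi$ (using the unit-vector-field model of $\mathcal{AF}$ for a fixed smooth Riemannian metric on $M$), and a smooth $1$-form $\theta$ on $M$ with $\theta(X) \equiv 1$. Surjectivity of $\mathcal{I}$ is the forward direction of Theorem~\ref{thmintro:supporting}. For any non-singular vector field $Y$ with $\theta(Y) > 0$ everywhere and any pair $(\gamma_-, \gamma_+)$ of $1$-forms on $M$, set
\[
\Sigma_Y(\gamma_-, \gamma_+) \coloneqq \left(\gamma_- - \frac{\gamma_-(Y)}{\theta(Y)}\,\theta,\ \gamma_+ - \frac{\gamma_+(Y)}{\theta(Y)}\,\theta\right),
\]
so that $Y$ lies in the kernels of both components; this expression is invariant under positive rescaling of $Y$ and is jointly continuous in $(Y, \gamma_-, \gamma_+)$.

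Next, pick small open neighborhoods $W \subset \mathcal{AF}$ of $\Phi$ and $V \subset \mathcal{I}^{-1}(\Phi) \subset \mathcal{AL}$ of $(\alpha_-, \alpha_+)$ such that $\theta(X') > 0$ for every representative $X'$ of every $\Phi' \in W$, and such that $\Sigma_{X'}(\gamma_-, \gamma_+) \in \mathcal{AL}$ for all $\Phi' \in W$ and $(\gamma_-, \gamma_+) \in V$. This second condition can be arranged because $\Sigma_X$ acts as the identity on $V$ (elements of $V$ satisfy $\gamma_\pm(X) = 0$) and $\mathcal{AL}$ is open in $\Omega^1(M) \times \Omega^1(M)$. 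Define
\[
\Psi^{-1} : W \times V \longrightarrow \mathcal{AL}, \qquad (\Phi', (\gamma_-, \gamma_+)) \longmapsto \Sigma_{X'}(\gamma_-, \gamma_+),
\]
let $U \coloneqq \Psi^{-1}(W \times V)$, and let $\Psi : U \to W \times V$ be the map $(\beta_-, \beta_+) \mapsto (\mathcal{I}(\beta_-, \beta_+), \Sigma_X(\beta_-, \beta_+))$. A short direct calculation using only $\theta(X) \equiv 1$, $\gamma_\pm(X) = 0$, and $\beta_\pm(X') = 0$ shows that $\Psi \circ \Psi^{-1}$ and $\Psi^{-1} \circ \Psi$ are both the identity, so $\Psi$ is a homeomorphism. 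By construction $\mathrm{pr}_1 \circ \Psi = \mathcal{I}|_U$ and $U \cap \mathcal{I}^{-1}(\Phi) = V$, and $U$ is a neighborhood of $(\alpha_-, \alpha_+)$ since $\Psi^{-1}$ is a homeomorphism onto $U$ with $(\alpha_-, \alpha_+) = \Psi^{-1}(\Phi, (\alpha_-, \alpha_+)) \in U$. This is exactly the local product structure required of a topological submersion.

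Conceptually, the reason for taking this indirect route rather than proving directly that $\mathcal{I}$ is a Serre fibration is that lifting an entire path of Anosov flows to a path of AL pairs would require continuous dependence of some chosen defining pair (and hence of the Anosov splitting) on the flow in $\mathcal{C}^\infty$-families, a subtle regularity issue also touched upon in the remark following Theorem~\ref{thm:homeqproj}. The topological-submersion strategy replaces this with a purely pointwise-in-the-base construction, and combined with contractibility of the fibers (Theorem~\ref{thm:contract}) via Lemma~\ref{lem:topfib}, it yields the full acyclic Serre fibration statement of Theorem~\ref{thm:homeq}.
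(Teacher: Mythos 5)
Your proof is correct and follows essentially the same route as the paper: the same $\theta$-modification operator $\alpha_\pm \mapsto \alpha_\pm - \frac{\alpha_\pm(X')}{\theta(X')}\theta$ is used to build the local trivialization, with the same explicit inverse $\beta_\pm \mapsto \beta_\pm - \beta_\pm(X)\theta$, and the composition checks you indicate are exactly the verifications the paper leaves to the reader. The only (minor) point to make fully explicit is that $U$ is open in $\mathcal{AL}$, which follows by observing that $U$ is the preimage of $W \times V$ under the globally defined continuous map $\boldsymbol{\beta} \mapsto (\mathcal{I}(\boldsymbol{\beta}), \Sigma_X(\boldsymbol{\beta}))$.
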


\begin{proof} By Theorem~\ref{thmintro:supporting}, $\mathcal{I}$ is surjective. We fix some auxiliary Riemannian metric $g$ on $M$ and identify $\mathcal{AF}$ with the space of unit Anosov vector fields on $M$. Let $\boldsymbol{\alpha}^0 = (\alpha^0_-, \alpha^0_+) \in \mathcal{AL}$ and let $X$ be the unit vector field generating $\mathcal{I}(\boldsymbol{\alpha}^0)$, whose flow is denoted by $\Phi$. We choose an arbitrary smooth $1$-form $\theta$ such that $\theta(X) \equiv 1$. For a unit vector field $X'$ sufficiently close to $X$ (so that $\theta(X') > 0$) and $\boldsymbol{\alpha} = (\alpha_-, \alpha_+) \in \mathcal{AL}_\Phi$, we define
\begin{align*}
\alpha'_\pm &\coloneqq \alpha_\pm - \frac{\alpha_\pm(X')}{\theta(X')} \, \theta,
\end{align*}
so that $\alpha'_\pm(X') = 0$. Since $\mathcal{AL} \subset \Omega^1 \times \Omega^1$ is open, we can find an open neighborhood $\mathcal{N}_{\boldsymbol{\alpha}^0}$ of $\boldsymbol{\alpha}^0$ in $\mathcal{AL}_\Phi$ and an open neighborhood $\mathcal{N}_\Phi$ of $\Phi$ in $\mathcal{AF}$ such that the map
$$\begin{array}{rccc}
\psi : &  \mathcal{N}_\Phi \times \mathcal{N}_{\boldsymbol{\alpha}^0} & \longrightarrow & \mathcal{AL} \\
	& \big(X',\boldsymbol{\alpha} \big) & \longmapsto & \boldsymbol{\alpha}'
\end{array}$$
is well-defined. It is continuous and satisfies $\mathcal{I} \circ \psi = \mathrm{pr}_1$. Moreover, the restriction of $\psi$ to $\{X\} \times \mathcal{N}_{\boldsymbol{\alpha}^0}$ is the inclusion $\mathcal{N}_{\boldsymbol{\alpha}^0} \subset \mathcal{AL}$. One easily checks that $\psi$ is injective, has open image and has an inverse given by $\psi^{-1}(\boldsymbol{\alpha}') \coloneqq \big(X', \boldsymbol{\alpha} \big)$ where
\begin{align*}
\alpha_\pm &\coloneqq \alpha'_\pm - \alpha'_\pm(X) \, \theta,
\end{align*}
and $X'$ is the unit vector field spanning $\mathcal{I}\big( \boldsymbol{\alpha}'\big)$. Therefore, $\psi^{-1}$ is a local trivialization of $\mathcal{I}$ around $\boldsymbol{\alpha}^0$.
\end{proof}

\begin{proof}[Proof of Theorem~\ref{thm:homeq}]
By Theorem~\ref{thm:contract} and Lemma~\ref{lem:topsub}, $\mathcal{I}$ is a topological submersion with contractible fibers, hence an acyclic Serre fibration by Lemma~\ref{lem:topfib}.
\end{proof}

\begin{proof}[Proof of Theorem~\ref{thmintro:louvstr}] Lemma~\ref{lem:topsub} and its proof hold verbatim for $\mathcal{I}^w$ so the previous proof applies to $\mathcal{I}^w$ as well.
\end{proof}

\begin{proof}[Proof of Theorem~\ref{thm:homeqproj}]
By Theorem~\ref{thm:projcontract}, we already know that the fibers of $\mathbb{P}\mathcal{I}$ are contractible so it is enough to adapt Lemma~\ref{lem:topsub}. It can be done by choosing an auxiliary smooth vector field $Z$, depending on an initial choice of $\big(\xi^0_-, \xi^0_+\big) \in \mathcal{BC}$, which is positively transverse to $\xi^0_\pm$ and satisfies $\theta(Z) \equiv 0$. We can uniquely choose contact forms $\alpha^0_\pm$ for $\xi^0_\pm$ by imposing $\alpha_\pm(Z) \equiv 1$. The proof of Lemma~\ref{lem:topsub} can be reproduced with minor changes to provide a suitable trivialization near $\big(\xi^0_-, \xi^0_+\big)$.
\end{proof}

		\subsection{The kernel map}

Recall that we have a continuous map
$$ \begin{array}{rccc}
\underline{\ker}: & \mathcal{AL} & \longrightarrow & \mathcal{BC} \\
& (\alpha_-, \alpha_+) & \longmapsto & (\ker \alpha_-, \ker \alpha_+ ),
\end{array}$$
where the spaces $\mathcal{AL}$ and $\mathcal{BC}$ are endowed with the $\mathcal{C}^\infty$ topology.

\begin{lem} \label{lem:keropen}
The map $\underline{\ker}$ is open.
\end{lem}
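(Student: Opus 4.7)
The plan is to prove openness by constructing a continuous local section of $\underline{\ker}$ near any point $\boldsymbol{\alpha}_0 = (\alpha^0_-, \alpha^0_+) \in \mathcal{AL}$. Concretely, let $\boldsymbol{\xi}_0 = (\xi^0_-, \xi^0_+) = \underline{\ker}(\boldsymbol{\alpha}_0)$, and choose smooth vector fields $Y_\pm$ on $M$ transverse to $\xi^0_\pm$ with $\alpha^0_\pm(Y_\pm) \equiv 1$ (for instance, $Y_\pm = R^0_\pm$, the Reeb fields of $\alpha^0_\pm$). For any bi-contact structure $(\xi_-, \xi_+)$ sufficiently $\mathcal{C}^\infty$-close to $\boldsymbol{\xi}_0$, the vector field $Y_\pm$ remains transverse to $\xi_\pm$, so there is a unique $1$-form $\alpha_\pm$ satisfying $\ker \alpha_\pm = \xi_\pm$ (with the prescribed co-orientation) and $\alpha_\pm(Y_\pm) \equiv 1$.

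Define $s(\xi_-, \xi_+) \coloneqq (\alpha_-, \alpha_+)$. Two verifications: first, $s$ is continuous in the $\mathcal{C}^\infty$ topology, which is immediate since, in a small neighborhood, $\alpha_\pm$ is obtained from any smooth defining $1$-form $\beta_\pm$ of $\xi_\pm$ via the algebraic formula $\alpha_\pm = \beta_\pm / \beta_\pm(Y_\pm)$, and this depends smoothly on $\beta_\pm$. Second, $s(\boldsymbol{\xi}_0) = \boldsymbol{\alpha}_0$ by construction, and since $\mathcal{AL}$ is open in $\Omega^1(M)\times\Omega^1(M)$, the image $s(U) \subset \mathcal{AL}$ on a sufficiently small open neighborhood $U$ of $\boldsymbol{\xi}_0$ in $\mathcal{BC}$. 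Thus $s : U \to \mathcal{AL}$ is a continuous local section of $\underline{\ker}$.

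From this, openness follows formally: given any open set $N \subset \mathcal{AL}$ and any $\boldsymbol{\alpha}_0 \in N$, form $s$ as above, and set $U' \coloneqq s^{-1}(N) \cap U$, which is an open neighborhood of $\boldsymbol{\xi}_0$ in $\mathcal{BC}$. For every $\boldsymbol{\xi} \in U'$, we have $\boldsymbol{\xi} = \underline{\ker}(s(\boldsymbol{\xi}))$ with $s(\boldsymbol{\xi}) \in N$, hence $U' \subset \underline{\ker}(N)$. Thus $\underline{\ker}(N)$ contains an open neighborhood of each of its points and is therefore open.

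There is no real obstacle here: the entire argument hinges only on the fact that, once one fixes an auxiliary normalization (via $Y_\pm$), the space of contact forms defining a contact structure has a canonical $1$-dimensional slice given by a linear equation, giving an honest continuous right inverse to the kernel map locally. The openness of $\mathcal{AL}$ in $\Omega^1(M) \times \Omega^1(M)$ (already observed after Definition~\ref{def:bicont}) ensures that the section actually lands in $\mathcal{AL}$ on a neighborhood of $\boldsymbol{\xi}_0$.
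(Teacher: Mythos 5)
Your proof is correct. The paper's own argument is close in spirit but packaged differently: it first trivializes $TM$ (using that $M$ is a parallelizable $3$-manifold) to identify the ambient kernel map $\ker : \mathring{\Omega}^1 \rightarrow \Pi$ globally with the projection $\mathcal{C}^\infty(M,\R) \times \mathcal{C}^\infty(M, S^3) \rightarrow \mathcal{C}^\infty(M,S^3)$, which is open, and then restricts to the open subset $\mathcal{AL} \subset \Omega^1 \times \Omega^1$. You instead prove openness of $\underline{\ker}$ directly by producing a continuous local section through every point of $\mathcal{AL}$ and running the standard ``section implies open'' argument; this avoids the global trivialization (so it would work verbatim on non-parallelizable manifolds and in higher dimensions) at the cost of being pointwise rather than exhibiting the map as a product. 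Both arguments rest on exactly the same continuity fact, namely that a cooriented plane field admits a defining $1$-form depending continuously on the plane field, which you should make slightly more explicit: the formula $\alpha_\pm = \beta_\pm/\beta_\pm(Y_\pm)$ is independent of the choice of positively cooriented defining form $\beta_\pm$, but to conclude continuity of $s$ you need one continuous choice $\xi_\pm \mapsto \beta_\pm$, supplied for instance by the metric dual of the unit positive normal (which is precisely the identification $\Pi \cong \mathcal{C}^\infty(M,S^3)$ used in the paper). With that one sentence added, your proof is complete.
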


\begin{proof}
It easily follows from the openness of $\mathcal{AL}$ and $\mathcal{BC}$ in the space of smooth $1$-forms on $M$ and the space of smooth plane fields on $M$, respectively, and the following elementary fact. If $\mathring{\Omega}^1 \subset \Omega^1$ denotes the space of nowhere vanishing $1$-forms on $M$ and $\Pi$ denotes the space of smooth cooriented plane fields on $M$, the natural map 
$$ \begin{array}{rccc}
\ker : & \mathring{\Omega}^1 & \longrightarrow & \Pi \\
& \alpha & \longmapsto & \ker \alpha
\end{array}$$
is open (for the $\mathcal{C}^\infty$ topology on the domain and target). Indeed, after trivializing the tangent bundle of $M$ and fixing an auxiliary Riemannian metric, we can identify $\Pi$ with the space of smooth maps $M \rightarrow S^3$ (via the unit normal vector) and $\mathring{\Omega}^1$ with the space of smooth maps $M \rightarrow \R^3 \setminus \{0\}$, so that $\mathrm{ker}$ becomes the composition with the standard projection $\R^3 \setminus \{0\} \cong \R \times S^3 \rightarrow S^3$. Ultimately, $\mathrm{ker}$ boils down to the projection $\mathcal{C}^\infty(M, \R) \times \mathcal{C}^\infty(M, S^3) \rightarrow \mathcal{C}^\infty(M, S^3)$ onto the second factor, which is clearly open.
\end{proof}

\begin{thm} \label{thm:hominc}
The map $\underline{\ker}$ is an acyclic Serre fibration onto its image.
\end{thm}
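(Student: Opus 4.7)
The plan mirrors the strategy used for Theorem~\ref{thm:homeq}: I will show that $\underline{\ker}$, viewed as a map onto $\underline{\ker}(\mathcal{AL}) \subset \mathcal{BC}$ (which is open by Lemma~\ref{lem:keropen}), is a topological submersion with contractible fibers, then invoke Lemma~\ref{lem:topfib}.

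For the topological submersion property, I would fix $(\alpha_-^0, \alpha_+^0) \in \mathcal{AL}$ with image $(\xi_-^0, \xi_+^0)$, and let $R_\pm^0$ denote the Reeb vector fields of $\alpha_\pm^0$. For $(\xi_-, \xi_+) \in \underline{\ker}(\mathcal{AL})$ sufficiently $\mathcal{C}^\infty$-close to $(\xi_-^0, \xi_+^0)$, $R_\pm^0$ remains transverse to $\xi_\pm$ with the correct coorientation, yielding unique defining $1$-forms $\beta_\pm = \beta_\pm(\xi_-, \xi_+)$ with $\beta_\pm(R_\pm^0) = 1$, depending continuously on $(\xi_-, \xi_+)$. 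For $(\alpha_-, \alpha_+) \in \mathcal{AL}$ near the reference pair, writing $g_\pm = \alpha_\pm(R_\pm^0) > 0$, the assignment
\[ (\alpha_-, \alpha_+) \longmapsto \big((\ker \alpha_-, \ker \alpha_+),\ (g_- \alpha_-^0, g_+ \alpha_+^0)\big) \]
has an explicit continuous inverse $\big((\xi_-, \xi_+), (h_- \alpha_-^0, h_+ \alpha_+^0)\big) \longmapsto (h_- \beta_-, h_+ \beta_+)$, and provides the required local product structure over a neighborhood of $(\xi_-^0, \xi_+^0)$ once the neighborhoods are shrunk enough for openness of the AL condition to keep everything in $\mathcal{AL}$.

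For contractibility of the fibers, I would fix $(\xi_-, \xi_+) \in \underline{\ker}(\mathcal{AL})$ and a reference pair $(\alpha_-^0, \alpha_+^0)$ in the fiber, with $X$ a nowhere vanishing section of $\xi_- \cap \xi_+$. Every element of the fiber is then of the form $(g_- \alpha_-^0, g_+ \alpha_+^0)$ for unique positive $g_\pm \in \mathcal{C}^\infty(M, \R)$. A direct computation, using the fact that $\alpha_-^0 \wedge \alpha_+^0 = k \, \iota_X \mathrm{dvol}$ for some nowhere vanishing smooth function $k$, yields (in the notation of Lemma~\ref{lem:charanosov})
\[ f_\pm = g_\pm^2 \, a_\pm, \qquad f_0 = g_- g_+ \, \big( k \, X \cdot \phi + a_0 \big), \]
where $\phi = \log(g_- g_+)$ and $a_\pm, a_0$ depend only on $(\alpha_-^0, \alpha_+^0)$. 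Hence the AL inequality $f_0^2 < 4 f_- f_+$ reduces to the pointwise condition $|k \, X \cdot \phi + a_0| < 2\sqrt{a_- a_+}$, which is linear, hence convex, in $\phi$. Since $\phi = \log g_- + \log g_+$ depends linearly on $(\log g_-, \log g_+) \in \mathcal{C}^\infty(M, \R) \times \mathcal{C}^\infty(M, \R)$, the fiber identifies homeomorphically with an open convex subset of this Fr\'echet space, and is therefore contractible.

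The key computational observation, and the thing that makes the whole approach work, is that the AL condition on $(g_- \alpha_-^0, g_+ \alpha_+^0)$ depends on the pair $(g_-, g_+)$ only through the product $g_- g_+$; the individual factors enter the contact quantities $\alpha_\pm \wedge d\alpha_\pm$ merely as squares $g_\pm^2$, and the cross term $d(\alpha_- \wedge \alpha_+)$ only through $g_- g_+$ and its Lie derivative along $X$. Once this reduction is verified, Lemma~\ref{lem:topfib} combines the topological submersion property with the contractibility of fibers to conclude that $\underline{\ker}$ is an acyclic Serre fibration onto its image.
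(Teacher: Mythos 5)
Your proposal is correct and follows essentially the same route as the paper: both arguments verify that $\underline{\ker}$ is a topological submersion onto its (open) image with contractible fibers and then invoke Lemma~\ref{lem:topfib}, with the contractibility in each case coming from the observation that the AL condition on $(g_-\alpha_-^0, g_+\alpha_+^0)$ is a convex constraint in logarithmic coordinates (your $|k\, X\cdot\phi + a_0| < 2\sqrt{a_-a_+}$ is the paper's $|2X\cdot\sigma + f_0| < 2$ after its reduction to balanced pairs). The only differences are bookkeeping: the paper first splits off the $\mathcal{C}^\infty(M,\R)$-action to work with balanced pairs and normalizes the local trivialization by volume forms, whereas you normalize by the reference Reeb vector fields and treat the full fiber directly; both work.
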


\begin{proof}
As before, by Lemma~\ref{lem:topfib}, it is enough to show the following properties.
\begin{enumerate}[label=(\arabic*)]
\item The fibers of $\underline{\ker}$ over its image are contractible.
\item $\underline{\ker}$ is a topological submersion onto its image.
\end{enumerate}

We can simplify the situation by restricting to the space $\mathcal{AL}^b$ of \emph{balanced} AL pairs, since there is a homeomorphism
$$\begin{array}{rccc} \label{eq:homeoAL} 
\vartheta: & \mathcal{C}^\infty(M, \R) \times \mathcal{AL}^b & \overset{\sim}{\longrightarrow} & \mathcal{AL}\\
& \big(\sigma, (\alpha_-, \alpha_+)\big) & \longmapsto & \big(e^{-\sigma} \alpha_-,  e^\sigma \alpha_+\big),
\end{array}$$
and $\underline{\ker}$ is compatible with this homeomorphism in the obvious way.

Let us consider a bi-contact structure $(\xi_-, \xi_+)$ defined by a balanced AL pair $(\alpha_-, \alpha_+)$, and let $\mathrm{dvol} \coloneqq \alpha_+ \wedge d\alpha_+$. We also consider a vector field $X \in \xi_- \cap \xi_+$ normalized so that $\alpha_- \wedge \alpha_+ = \iota_X \mathrm{dvol}$.

To show (1), note that any other balanced AL pair defining $(\xi_-, \xi_+)$ is of the form $(e^\sigma \alpha_-, e^\sigma \alpha_+)$ for a smooth function $\sigma : M \rightarrow \R$ satisfying $$ \vert 2 X\cdot \sigma + f_0 \vert<2,$$
where we use the notation of Lemma~\ref{lem:charanosov}. By assumption, $\vert f_0 \vert < 2$, so the space of $\sigma$ such that $(e^\sigma \alpha_-, e^\sigma \alpha_+)$ is a balanced AL pair defining $(\xi_-, \xi_+)$ is convex, hence contractible.

To show (2), we consider an open neighborhood $\mathcal{V}$ of $(\alpha_-, \alpha_+)$ in $\mathcal{AL}^b$. We can find a smaller neighborhood $\mathcal{V}' \subset \mathcal{V}$ such that for every $(\alpha'_-, \alpha'_+) \in \mathcal{V}'$, the pair 
$$\left(\widetilde{\alpha}'_-, \widetilde{\alpha}'_+\right) \coloneqq \left( \frac{1}{\sqrt{f'}} \alpha'_-, \frac{1}{\sqrt{f'}} \alpha'_+ \right)$$
is in $\mathcal{V}$, where 
$$ \alpha'_\pm \wedge d\alpha'_\pm = \pm f' \, \mathrm{dvol}.$$
Note that $\left(\widetilde{\alpha}'_-, \widetilde{\alpha}'_+\right)$ is a balanced AL pair satisfying $\widetilde{\alpha}'_+ \wedge d \widetilde{\alpha}'_+ = \mathrm{dvol}$. Since $\underline{\ker}$ is open by the previous lemma, $\mathcal{U}' \coloneqq \underline{\ker}(\mathcal{V}') \subset \mathcal{BC}$ is an open neighborhood of $(\xi_-, \xi_+)$. Let $\widetilde{\mathcal{V}}' \subset \mathcal{V}$ be the subspace of elements of the form $\left(\widetilde{\alpha}'_-, \widetilde{\alpha}'_+\right)$ for $(\alpha'_-, \alpha'_+) \in \mathcal{V}'$. One easily checks that $\underline{\ker} : \widetilde{\mathcal{V}}' \rightarrow \mathcal{U}'$ is injective and open. It is surjective by definition, hence it is a homeomorphism. By the previous paragraph, there is an open neighborhood of $(\alpha_-, \alpha_+)$ in $\underline{\ker}^{-1}\big\{(\xi_-, \xi_+)\big\} \cap \mathcal{AL}^b$ homeomorphic to 
$$\Sigma_\epsilon \coloneqq \left\{ \sigma : M \rightarrow \R  :  \vert X \cdot \sigma \vert < \epsilon \right\}$$
for some small $\epsilon > 0$. Therefore, after possibly shrinking $\epsilon$, the map
$$\begin{array}{ccc}
\widetilde{\mathcal{V}}' \times \Sigma_\epsilon & \longrightarrow & \mathcal{AL}^b\\
\big(\big(\widetilde{\alpha}'_-, \widetilde{\alpha}'_+\big), \sigma \big) & \longmapsto & \left(e^\sigma \widetilde{\alpha}'_-,  e^\sigma \widetilde{\alpha}'_+ \right)
\end{array}$$
induces a local trivialization of $\mathcal{AL}^b \rightarrow \underline{\ker}\left(\mathcal{AL} \right)$ around $(\alpha_-, \alpha_+)$.

This proves that $\underline{\ker}$ restricted to $\mathcal{AL}^b$ is a topological submersion with contractible fibers, and the same holds for $\underline{\ker}$ on $\mathcal{AL}$ via the homeomorphism $\vartheta$.
\end{proof}

\begin{rem}
Since $\underline{\ker}$ is open, its image has the homotopy type of a CW complex.
\end{rem}

There is an inclusion $\underline{\ker}\big(\mathcal{AL}\big) \subset \mathbb{P}\mathcal{I}^{-1} \big( \mathcal{AF} \big)$ which is strict according to Theorem~\ref{thm:supporting}.\footnote{Indeed, any volume preserving Anosov flow can be perturbed near a closed orbit in such a way that the new Poincar\'e return map for this orbit has determinant different than $1$, so the flow is not volume preserving anymore.} In more concrete terms, there exist bi-contact structures supporting Anosov flows which cannot be represented as the kernel of a AL pairs. Nevertheless, we have:

\begin{thm} \label{thm:hominc'}
The inclusion $\underline{\ker}\big(\mathcal{AL}\big) \subset \mathbb{P}\mathcal{I}^{-1} \big( \mathcal{AF} \big)$ is a homotopy equivalence.
\end{thm}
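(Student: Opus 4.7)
The plan is to exploit the factorization $\mathcal{I} = \mathbb{P}\mathcal{I} \circ \iota \circ \underline{\ker}$, where $\iota: \underline{\ker}(\mathcal{AL}) \hookrightarrow \mathbb{P}\mathcal{I}^{-1}(\mathcal{AF})$ denotes the inclusion in question, and then to apply the two-out-of-three property for weak homotopy equivalences twice. The factorization holds because, for any AL pair $(\alpha_-,\alpha_+)$, one has $\mathcal{I}(\alpha_-,\alpha_+) = \ker\alpha_- \cap \ker\alpha_+ = \mathbb{P}\mathcal{I}(\ker\alpha_-, \ker\alpha_+)$.

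The main preparatory step is to show that the restriction $\mathbb{P}\mathcal{I}|_{\mathbb{P}\mathcal{I}^{-1}(\mathcal{AF})}: \mathbb{P}\mathcal{I}^{-1}(\mathcal{AF}) \to \mathcal{AF}$ is itself an acyclic Serre fibration, hence a homotopy equivalence. First I would verify that $\mathcal{AF}$ is open in $\mathbb{P}\mathcal{AF}$: the Anosov property is $\mathcal{C}^1$-open on the level of vector fields by the corollary following Theorem~\ref{thmintro:supporting}, and since on a closed manifold positive reparametrization preserves Anosov-ness (the contraction rates are merely scaled by a positive function bounded away from zero), this openness descends to the quotient by positive reparametrization. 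Consequently $\mathbb{P}\mathcal{I}^{-1}(\mathcal{AF})$ is open in $\mathcal{BC}$, and as an open subset of a Fréchet space it has the homotopy type of a CW complex by the argument recalled at the beginning of Section~\ref{sec:spaces}. A standard check shows that the restriction of a Serre fibration to the preimage of a subspace is again a Serre fibration with the same fibers; applied to Theorem~\ref{thm:homeqproj}, this yields that $\mathbb{P}\mathcal{I}|_{\mathbb{P}\mathcal{I}^{-1}(\mathcal{AF})}$ is an acyclic Serre fibration, which is a homotopy equivalence by Whitehead's theorem.

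The remainder is a formal diagram chase. By Theorem~\ref{thm:homeq}, $\mathcal{I}: \mathcal{AL} \to \mathcal{AF}$ is a homotopy equivalence, and by Theorem~\ref{thm:hominc}, $\underline{\ker}: \mathcal{AL} \to \underline{\ker}(\mathcal{AL})$ is a homotopy equivalence. Writing $\mathcal{I} = (\mathbb{P}\mathcal{I} \circ \iota) \circ \underline{\ker}$, two-out-of-three yields that $\mathbb{P}\mathcal{I} \circ \iota: \underline{\ker}(\mathcal{AL}) \to \mathcal{AF}$ is a homotopy equivalence. A second application of two-out-of-three to the factorization $\mathbb{P}\mathcal{I} \circ \iota = \mathbb{P}\mathcal{I}|_{\mathbb{P}\mathcal{I}^{-1}(\mathcal{AF})} \circ \iota$, using the homotopy equivalence from the previous paragraph, forces $\iota$ itself to be a homotopy equivalence, as desired. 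I do not expect any substantive obstacle: all of the dynamical content has already been absorbed into Theorems~\ref{thm:homeq}, \ref{thm:homeqproj}, and \ref{thm:hominc}, and the only genuinely new input is the openness of $\mathcal{AF} \subset \mathbb{P}\mathcal{AF}$, which is immediate.
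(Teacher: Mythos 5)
Your proposal is correct and follows essentially the same route as the paper: factor $\mathcal{I}=\mathbb{P}\mathcal{I}\circ\iota\circ\underline{\ker}$, observe that the corestriction $\mathbb{P}\mathcal{I}^{-1}(\mathcal{AF})\to\mathcal{AF}$ is still an acyclic Serre fibration, and conclude by two-out-of-three using Theorems~\ref{thm:homeq}, \ref{thm:homeqproj} and \ref{thm:hominc}. The paper states this as ``immediate'' from its commutative diagram; you have merely spelled out the same diagram chase, including the CW-homotopy-type point needed to pass from weak to genuine homotopy equivalences.
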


\begin{proof} It immediately follows from Theorem~\ref{thm:homeq}, Theorem~\ref{thm:homeqproj}, Theorem~\ref{thm:hominc} and the commutative diagram
$$ \begin{tikzcd}[row sep = large]
\mathcal{AL} \arrow[r, "\sim"] \arrow[drr, "\sim", "\mathcal{I}"', bend right=21]& \underline{\ker} \big(\mathcal{AL}\big) \arrow[r, symbol=\subset] &[-20pt] \mathbb{P}\mathcal{I}^{-1} \big(\mathcal{AF} \big) \arrow[r, hook] \arrow[d, "\rotatebox{90}{$\sim$}","\mathbb{P}\mathcal{I}"'] \arrow[dr, phantom, "\lrcorner", very near start] &[-5pt] \mathcal{BC} \arrow[d, "\rotatebox{90}{$\sim$}","\mathbb{P}\mathcal{I}"']\\
					&			& \mathcal{AF} \arrow[r, hook] & \mathbb{P}\mathcal{AF}
\end{tikzcd}$$
Note that the corestriction of $\mathbb{P}\mathcal{I}$ over $\mathcal{AF}$ is also an acyclic Serre fibration, and all the spaces in this diagram have the homotopy type of CW complexes.
\end{proof}

	\section{Linear Liouville pairs}
	
As explained in the Introduction, there exist other possible definitions for Liouville pairs. The following one is used by some authors (e.g., \cite{M95, H22a}).

\begin{defn} \label{def:linanoliouv}
A pair of contact forms $(\alpha_-, \alpha_+)$ on $M$ is a \textbf{linear Liouville pair} if the $1$-form $$(1-t) \alpha_- + (1+t) \alpha_+$$ on $[-1,1]_t \times M$ is a positively oriented Liouville form. 

The pair $(\alpha_-, \alpha_+)$ is a \textbf{linear Anosov Liouville pair} ($\ell$AL pair for short) if both $(\alpha_-, \alpha_+)$ and $(-\alpha_-, \alpha_+)$ are linear Liouville pairs.
\end{defn}

Note that for this definition, $[-1,1] \times M$ is a Liouville \emph{domain} instead of a Liouville \emph{manifold}. In this section, we study some similarities and differences between Liouville pairs and linear Liouville pairs. In particular, we show that those are two different notions (Lemma~\ref{lem:alvslal}). Moreover, a pair a contact forms which is both a Liouville pair and a linear Liouville pair defines Liouville structures in two different ways, and we show that they are homotopic (Proposition~\ref{prop:sameliouv}). We believe that this result is relevant since all of the natural constructions of Liouville pairs we are aware of satisfy both definitions. The linear formulation might be more convenient in some situations. The results in this section are independent from the main results of this article.

        \subsection{Elementary properties}

The results in Section~\ref{section:elem} can be adapted to $\ell$AL pairs. First of all, $\ell$AL pairs can be characterized in the following way (see Lemma~\ref{lem:charanosov}):

\begin{lem} \label{lem:charanosovlin}
Let $(\alpha_-, \alpha_+)$ be a pair of contact forms on $M$. We write
\begin{align*}
\alpha_+\wedge d \alpha_+ &= f_+ \, \mathrm{dvol}, \\
\alpha_- \wedge d \alpha_- &= - f_- \, \mathrm{dvol}, \\
\alpha_- \wedge d\alpha_+ &= g_+ \ \mathrm{dvol}, \\
\alpha_+\wedge d\alpha_- &= g_- \ \mathrm{dvol}, 
\end{align*}
where $\mathrm{dvol}$ is any volume form on $M$ and $f_\pm, g_\pm : M \rightarrow \R$ are smooth functions.
Then $(\alpha_-, \alpha_+)$ is a $\ell$AL pair if and only if
\begin{align} \label{ineqflin}
|g_-| < f_- \qquad and \qquad |g_+| < f_+.
\end{align}
\end{lem}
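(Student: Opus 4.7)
The approach is to reduce the Liouville condition on $[-1,1]\times M$ to conditions at the two endpoints $t=\pm 1$, by exploiting the fact that $d\lambda_t\wedge d\lambda_t$ is affine in $t$ for $\lambda_t \coloneqq (1-t)\alpha_- + (1+t)\alpha_+$. A direct expansion of $d\lambda_t = dt\wedge(\alpha_+ - \alpha_-) + (1-t)d\alpha_- + (1+t)d\alpha_+$, combined with $dt \wedge dt = 0$ and the vanishing of $d\alpha_\pm \wedge d\alpha_\pm$ (as 4-forms on the 3-manifold $M$), will yield $d\lambda_t\wedge d\lambda_t = 2\bigl[(1-t)(g_- + f_-) + (1+t)(f_+ - g_+)\bigr]\,dt\wedge \mathrm{dvol}$. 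Being affine in $t$, this expression is positive everywhere on $[-1,1]\times M$ if and only if it is positive at both endpoints, which translates to $g_+ < f_+$ (at $t = +1$) and $g_- > -f_-$ (at $t = -1$).

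I will then apply the same analysis to the pair $(-\alpha_-, \alpha_+)$. Under the substitution $\alpha_- \mapsto -\alpha_-$, the quantities $f_\pm$ are unchanged (since $f_-$ arises from $\alpha_-\wedge d\alpha_-$, which is quadratic in $\alpha_-$), while both $g_+$ and $g_-$ reverse sign. The resulting endpoint conditions become $g_+ > -f_+$ and $g_- < f_-$. Combining the two sets of conditions yields exactly the inequalities $|g_+| < f_+$ and $|g_-| < f_-$ of~\eqref{ineqflin}, and the converse implication is immediate by reversing the reduction.

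The remaining subtlety is that the definition of a Liouville form on $[-1,1]\times M$ requires the Liouville vector field $Z$ to be outward-pointing along $\{\pm 1\}\times M$, and I must verify that this imposes no constraints beyond those already derived. Decomposing $Z = z\partial_t + W$ with $W \in TM$ and evaluating $\iota_Z d\lambda_t = \lambda_t$ against the Reeb vector field $R_\pm$ of $\alpha_\pm$ at $t = \pm 1$, a short calculation will show that $z$ has the correct sign (positive at $t = +1$, negative at $t = -1$) precisely under the endpoint inequalities already derived; the key identifications here are $g_+/f_+ = \alpha_-(R_+)$ and $g_-/f_- = -\alpha_+(R_-)$, obtained using a volume form adapted to $\alpha_+$ or $\alpha_-$. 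The main obstacle in the proof is thus not analytic but rather the careful bookkeeping of the many sign conventions and orientations involved, in particular making sure that the endpoint positivity of $d\lambda_t\wedge d\lambda_t$ simultaneously encodes both the symplectic condition and the outward-pointing requirement.
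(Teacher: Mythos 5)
Your proposal is correct and follows essentially the same route as the paper: expand $d\lambda_t\wedge d\lambda_t$ to get an expression affine in $t$, reduce positivity on $[-1,1]$ to the endpoints $t=\pm1$ (yielding $g_+<f_+$ and $-g_-<f_-$), and repeat for $(-\alpha_-,\alpha_+)$ to obtain the absolute-value inequalities. The only difference is that you also explicitly check the outward-pointing condition for the Liouville vector field at the boundary, which the paper leaves implicit; this is indeed automatic here since $\lambda$ restricts to $2\alpha_\pm$ on $\{\pm1\}\times M$ and these are contact forms inducing the correct boundary orientations.
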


\begin{proof}
The pair $(\alpha_-, \alpha_+)$ is a linear Liouville pair if and only if for every $t \in [-1, 1]$, $$(\alpha_+ - \alpha_-) \wedge (d\alpha_+ + d\alpha_- + t(d\alpha_+ - d\alpha_-))>0,$$ 
which is equivalent to $$f_+ +f_- + g_- - g_+  + t(f_+ -f_- - g_- - g_+)>0.$$
This inequality is satisfied for every $t \in [-1,1]$ if and only if it is satisfied for $t=-1$ and $t=1$, which is equivalent to $g_+ < f_+$ and $-g_- < f_-$.
\end{proof}

Similarly to Proposition~\ref{prop:anoliouv}, we also have:

\begin{prop} \label{prop:linanoliouv}
Let $(\alpha_-, \alpha_+)$ be a $\ell$AL pair. Then it defines a bi-contact structure $(\xi_-,\xi_+) = (\ker \alpha_-, \ker \alpha_+)$. Moreover, if $X \in \xi_- \cap \xi_+$ is a nowhere vanishing vector field, then $(X, R_-, R_+)$ is a basis at every point of $M$.
\end{prop}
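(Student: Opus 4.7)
The plan is to mirror the proof of Proposition~\ref{prop:anoliouv}, but using the characterization of $\ell$AL pairs from Lemma~\ref{lem:charanosovlin} in place of Lemma~\ref{lem:charanosov}. The two statements $|g_-|<f_-$ and $|g_+|<f_+$ replace the single inequality $f_0^2<4f_-f_+$, and we will derive the same kind of contradictions along the two relevant ``degenerate'' loci, using essentially the same auxiliary computations.

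For the transversality of $\xi_-$ and $\xi_+$, I would assume by contradiction that at some point $x\in M$ there exist two linearly independent vectors $X,Y\in T_xM$ with $\alpha_\pm(X)=\alpha_\pm(Y)=0$. Normalize $\mathrm{dvol}(X,Y,R_+)=1$. Exactly as in the proof of Proposition~\ref{prop:anoliouv}, the only term of $\alpha_\star\wedge d\alpha_\bullet(X,Y,R_+)$ that survives is the one involving $d\alpha_\bullet(X,Y)$ and possibly $\alpha_\star(R_+)$; this gives
$$f_+ = d\alpha_+(X,Y), \qquad -f_- = \alpha_-(R_+)\, d\alpha_-(X,Y),$$
$$g_+ = \alpha_-(R_+)\, d\alpha_+(X,Y) = \alpha_-(R_+)\,f_+, \qquad g_- = d\alpha_-(X,Y).$$
If $g_-=0$, then $f_-=0$, contradicting $f_->0$. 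Otherwise, eliminating $\alpha_-(R_+)$ between the second and third identities yields $g_+g_-=-f_-f_+$, so $|g_+g_-|=f_-f_+$, which contradicts $|g_+|<f_+$ and $|g_-|<f_-$.

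For the basis statement, I would assume $\mathrm{dvol}(X,R_-,R_+)=0$ at some point. As in the proof of Proposition~\ref{prop:anoliouv}, expanding $\alpha_\star\wedge d\alpha_\bullet(X,R_+,\,\cdot\,)$ and plugging in $R_-$ yields
$$d\alpha_-(X,R_+) = d\alpha_+(X,R_-) = 0,$$
and then comparing the resulting $2$-forms $d\alpha_\pm(X,\,\cdot\,)$ with $\mathrm{dvol}(X,R_+,\,\cdot\,)$ and $\mathrm{dvol}(X,R_-,\,\cdot\,)$ gives the four relations
$$g_+ = \alpha_-(R_+)\,f_+, \qquad f_+ = \alpha_+(R_-)\,g_+,$$
$$g_- = -\alpha_+(R_-)\,f_-, \qquad -f_- = \alpha_-(R_+)\,g_-.$$
Each of these pairs forces $\alpha_-(R_+)\,\alpha_+(R_-)=1$. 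On the other hand, the inequalities $|g_\pm|<f_\pm$ combined with the first and third identities give $|\alpha_-(R_+)|<1$ and $|\alpha_+(R_-)|<1$, so $|\alpha_-(R_+)\,\alpha_+(R_-)|<1$, a contradiction.

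The bookkeeping is the only real obstacle: all the needed computations are already carried out in the proof of Proposition~\ref{prop:anoliouv}, so the main task is to verify that the conclusions $f_0^2-4f_-f_+\ge 0$ obtained there actually refine to the sharper identities $g_+g_-=-f_-f_+$ and $\alpha_-(R_+)\alpha_+(R_-)=1$ stated above, which are incompatible with the pair of inequalities $|g_\pm|<f_\pm$. Since those identities fall out of the same computations without new ideas, the proof is essentially a direct adaptation.
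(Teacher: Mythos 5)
Your proposal is correct and follows essentially the same route as the paper: the transversality part reproduces the paper's computation verbatim (arriving at $|g_-g_+|=f_-f_+$, which contradicts $|g_\pm|<f_\pm$), and the basis part uses the same identities at a degenerate point. The only cosmetic difference is that in the second half you phrase the contradiction as $\alpha_-(R_+)\,\alpha_+(R_-)=1$ versus $|\alpha_-(R_+)|,|\alpha_+(R_-)|<1$ (the form used in the paper's Proposition~\ref{prop:reebvflin}), whereas the paper's proof of this proposition goes directly to $|g_-g_+|=f_-f_+$; these are equivalent given the relations you derive.
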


\begin{proof}
To see that $\xi_-$ and $\xi_+$ are everywhere transverse, we argue by contradiction and assume that they coincide at a point $x\in M$. With the same notations as in the proof of~\ref{prop:anoliouv}, we readily get
\begin{align*}
f_+ &=d\alpha_+(X,Y),\\
f_- &=-\alpha_-(R_+) d\alpha_-(X,Y), \\
g_+ &=\alpha_-(R_+) d\alpha_+(X,Y), \\
g_- &=d\alpha_-(X,Y),
\end{align*}
hence $$\vert g_- g_+\vert =\vert \alpha_-(R_+) d\alpha_-(X,Y) d\alpha_+(X,Y) \vert =  f_- f_+,$$
contradicting~\eqref{ineqflin}.

Now, assuming that $\mathrm{dvol}(X,R_-, R_+) = 0$ at a point $x \in M$, the computations in the proof of Proposition~\ref{prop:anoliouv} show
\begin{align*}
f_- &= -\alpha_-(R_+) g_-, \\
f_+ &= \frac{1}{\alpha_-(R_+)} g_+,
\end{align*}
hence $$|g_- g_+| = f_- f_+,$$
contradicting~\eqref{ineqflin} once again.
\end{proof}

\begin{rem}
A main difference between Anosov Liouville pairs as in Definition~\ref{def:anoliouv} and linear Anosov Liouville pairs as in Definition~\ref{def:linanoliouv} is that there does not seem to be a natural action of $\mathcal{C}^\infty(M, \R)$ on $\ell$AL pairs. Moreover, we do not know if there is a natural modification making a $\ell$AL pair balanced.
\end{rem}

$\ell$AL pairs can also be characterized by their Reeb vector fields (see Proposition~\ref{prop:reebvf}):

\begin{prop} \label{prop:reebvflin}
Let $(\alpha_-, \alpha_+)$ be a pair of contact forms on $M$, negative and positive, respectively. Then it is a $\ell$AL pair if and only if
\begin{align} \label{ineqreeblin}
\vert \alpha_-(R_+)\vert < 1 \qquad and \qquad \vert \alpha_+(R_-)\vert <1.
\end{align}
\end{prop}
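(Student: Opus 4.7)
The proof will mirror that of Proposition~\ref{prop:reebvf}, with Lemma~\ref{lem:charanosovlin} playing the role of Lemma~\ref{lem:charanosov}. The key ingredients are the identities
\begin{align*}
g_+ = \alpha_-(R_+) \, f_+, \qquad g_- = -\alpha_+(R_-) \, f_-,
\end{align*}
which hold whenever $(X, R_-, R_+)$ is a basis at every point and $\mathrm{dvol}$ is normalized so that $\mathrm{dvol}(X, R_-, R_+) = 1$. They follow from direct expansions of the triple products $\alpha_- \wedge d\alpha_+(X, R_-, R_+)$ and $\alpha_+ \wedge d\alpha_-(X, R_-, R_+)$ using $\alpha_\pm(R_\pm) = 1$, $\alpha_\pm(X) = 0$, and $\iota_{R_\pm} d\alpha_\pm = 0$, exactly as recorded after Proposition~\ref{prop:anoliouv}.

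The forward direction is then immediate: if $(\alpha_-, \alpha_+)$ is a $\ell$AL pair, Proposition~\ref{prop:linanoliouv} ensures that $(X, R_-, R_+)$ is a basis everywhere, so the identities above apply, and Lemma~\ref{lem:charanosovlin} yields $|\alpha_-(R_+)| \, f_+ = |g_+| < f_+$ and $|\alpha_+(R_-)| \, f_- = |g_-| < f_-$, whence~\eqref{ineqreeblin}. For the converse, the plan is to first establish both conclusions of Proposition~\ref{prop:linanoliouv} directly from the Reeb inequalities, and then to close the loop via the identities and Lemma~\ref{lem:charanosovlin}.

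To this end, suppose for contradiction that $\xi_-(x) = \xi_+(x) =: P$ at some point $x$, and pick linearly independent $X, Y$ spanning $P$. Since $\alpha_\pm(R_\pm) = 1$, both $R_-$ and $R_+$ are transverse to $P$, so $(X, Y, R_+)$ is a basis of $T_x M$. Writing $R_- = aX + bY + cR_+$ and evaluating $\alpha_-$ then $\alpha_+$ gives $c = 1/\alpha_-(R_+)$ and $\alpha_+(R_-) = c$, hence $\alpha_-(R_+) \, \alpha_+(R_-) = 1$, contradicting~\eqref{ineqreeblin}. Transversality being established, pick a nowhere vanishing local section $X$ of $\xi_- \cap \xi_+$: if $\mathrm{dvol}(X, R_-, R_+)$ vanished at some point, then writing $R_- = aX + bR_+$ and evaluating $\alpha_-$, $\alpha_+$ would again yield $\alpha_-(R_+) \, \alpha_+(R_-) = 1$, a contradiction. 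Thus $(X, R_-, R_+)$ is a basis everywhere, the identities apply, and Lemma~\ref{lem:charanosovlin} concludes. The main conceptual difference with Proposition~\ref{prop:reebvf} is that the AM-GM step used there to deduce $\alpha_-(R_+) \, \alpha_+(R_-) < 1$ from $|\alpha_-(R_+) + \alpha_+(R_-)| < 2$ is no longer required: the separated hypotheses $|\alpha_-(R_+)|, |\alpha_+(R_-)| < 1$ directly imply $|\alpha_-(R_+) \, \alpha_+(R_-)| < 1$, so I anticipate no serious obstacle.
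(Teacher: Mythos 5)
Your proof is correct and follows essentially the same route as the paper: the forward direction via Proposition~\ref{prop:linanoliouv}, the identities $g_+ = \alpha_-(R_+) f_+$, $g_- = -\alpha_+(R_-) f_-$, and Lemma~\ref{lem:charanosovlin}; and the converse by verifying the conclusions of Proposition~\ref{prop:linanoliouv} under~\eqref{ineqreeblin} and closing the loop the same way. The only (harmless) variation is that you derive the key identity $\alpha_-(R_+)\,\alpha_+(R_-) = 1$ at a degenerate point by directly expanding $R_-$ in a basis, rather than reusing the volume-form computations from the proofs of Propositions~\ref{prop:anoliouv} and~\ref{prop:reebvf} as the paper does — a slightly more elementary derivation of the same contradiction.
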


\begin{proof}
If $(\alpha_-, \alpha_+)$ is a $\ell$AL pair, then Proposition~\ref{prop:linanoliouv} and the computations in the proof of Proposition~\ref{prop:anoliouv} imply
\begin{align*}
g_+ &= \alpha_-(R_+)\, f_+, \\
g_- &= - \alpha_+(R_-)\, f_-,
\end{align*}
and~\eqref{ineqreeblin} follows from~\eqref{prop:linanoliouv}.

Now, assuming that $(\alpha_-, \alpha_+)$ satisfies~\eqref{ineqreeblin}, it is enough to prove that the conclusions of Proposition~\ref{prop:linanoliouv} are satisfied. This follows exactly from the proof of Proposition~\ref{prop:reebvf}.
\end{proof}

Combining Proposition~\ref{prop:reebvf} and Proposition~\ref{prop:reebvflin}, we obtain that any \emph{balanced} $\ell$AL pair is an AL pair. The converse is \emph{not} true by the following lemma. It also implies that the two definitions of Liouville pairs (Definition~\ref{def:anoliouv} and Definition~\ref{def:linanoliouv}) are \emph{different}:

\begin{lem} \label{lem:alvslal}
Every smooth volume preserving Anosov flow on $M$ admits a supporting balanced AL pair which is \underline{not} a $\ell$AL pair, and whose underlying bi-contact structure is \underline{not} defined by a $\ell$AL pair.
\end{lem}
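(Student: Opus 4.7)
The plan is to construct an explicit supporting balanced AL pair failing the $\ell$AL condition by a one-parameter deformation of the standard AL pair of Section~\ref{sec:std}, and then to rule out any $\ell$AL defining pair of the resulting bi-contact structure using the invariant volume.

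Fix a $\mathcal{C}^1$ defining pair $(\alpha_s, \alpha_u)$ of $\Phi$ with $r_s + r_u = 0$ (Lemma~\ref{lem:anosovforms}(3)), and for a constant $h > 0$ consider
$$(\alpha^h_-, \alpha^h_+) \coloneqq \big(\alpha_u + \alpha_s,\ \alpha_u - h\alpha_s\big).$$
Repeating the elementary identities of Section~\ref{sec:std}, together with the fact that the $2$-form $\iota_X \mathrm{dvol}$ is closed for the $\Phi$-invariant volume form, yields $f_-^h = 2r_u$, $f_+^h = 2hr_u$, and $f_0^h = 0$. Lemma~\ref{lem:charanosov} then shows this is a $\mathcal{C}^1$ AL pair supporting $\Phi$ for every $h > 0$, and the conformal action of Lemma~\ref{lem:balanced} with the constant $\sigma = -\tfrac14 \ln h$ produces a balanced $\mathcal{C}^1$ AL pair $(\tilde\alpha_-, \tilde\alpha_+) \coloneqq \big(h^{1/4}(\alpha_u+\alpha_s),\ h^{-1/4}(\alpha_u - h\alpha_s)\big)$. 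A direct Reeb computation analogous to the one at the end of Section~\ref{sec:std}, combined with the scaling $R \mapsto c^{-1} R$ of Reeb vector fields under $\alpha \mapsto c\alpha$ with $c$ constant, gives
$$\tilde\alpha_-(\tilde R_+) = \frac{h-1}{2\sqrt h}, \qquad \tilde\alpha_+(\tilde R_-) = \frac{1-h}{2\sqrt h}.$$
Their sum vanishes, consistently with Proposition~\ref{prop:reebvf}, but their common absolute value $\tfrac{|1-h|}{2\sqrt h}$ exceeds $1$ exactly when $(1-h)^2 > 4h$, i.e., for $h \in (0,\, 3 - 2\sqrt 2) \cup (3 + 2\sqrt 2,\, +\infty)$. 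Fix such an $h$: by Proposition~\ref{prop:reebvflin} the pair is not $\ell$AL. Approximating $(\alpha_s, \alpha_u)$ in the $\mathcal{C}^1$ topology by smooth forms, subtracting multiples of a fixed smooth $\theta$ satisfying $\theta(X) \equiv 1$ to preserve the common kernel (as in the Corollary after Definition~\ref{def:bicont}), and rebalancing via Lemma~\ref{lem:balanced} produces a smooth balanced AL pair supporting $\Phi$ which is still not $\ell$AL, since both the AL and non-$\ell$AL conditions are $\mathcal{C}^1$-open.

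For the second assertion, any pair of contact forms defining the underlying bi-contact structure $(\xi_-, \xi_+)$ takes the form $(f_- \tilde\alpha_-,\ f_+ \tilde\alpha_+)$ for smooth $f_\pm > 0$. Writing out the Reeb vector fields of the rescaled pair via $R_+^\beta = f_+^{-1} \tilde R_+ + V_+$, with $V_+ \in \xi_+$ determined by $d\tilde\alpha_+(V_+,\cdot)|_{\xi_+} = d\ln f_+|_{\xi_+}$, and analogously for $R_-^\beta$, the $\ell$AL inequalities $|g^\beta_\pm| < f^\beta_\pm$ of Lemma~\ref{lem:charanosovlin} unfold to the pointwise constraints
$$\bigl|(h-1)r_u - (1+h)\,X \cdot \ln f_+\bigr| < 2h\,r_u\,R, \qquad \bigl|(h-1)r_u + (1+h)\,X \cdot \ln f_-\bigr| < \frac{2 r_u}{R},$$
where $R = f_+/f_-$. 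Multiplying these gives the pointwise bound $|A|\,|B| < 4h\,r_u^2$. Volume preservation supplies the global obstruction: each $X \cdot \ln f_\pm$ is an exact $X$-coboundary and thus integrates to zero against the invariant probability measure $\mu$, so $\int A\,d\mu = \int B\,d\mu = (h-1)\int r_u\,d\mu$. Averaging the pointwise bound against $\mu$ and applying the triangle inequality together with Cauchy–Schwarz yields a contradiction precisely when $(1-h)^2 > 4h$, which is exactly our chosen range of $h$; hence no $\ell$AL pair defines $(\xi_-, \xi_+)$.

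The main obstacle is this final averaging argument. Constructing an explicit non-$\ell$AL pair is essentially a one-parameter unravelling of Section~\ref{sec:std}, but ruling out \emph{all} rescalings of the contact forms requires combining the pointwise $\ell$AL constraints with the global obstruction provided by volume preservation, and the fact that the critical threshold $3 - 2\sqrt 2$ appears both in the construction step and in the non-existence step is what makes the argument match up. Volume preservation is used crucially twice: once to supply a defining pair with $r_s + r_u = 0$ so that $f_0^h = 0$, and once to kill coboundary averages in the final step.
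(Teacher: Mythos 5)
Your construction in the first part is correct and is essentially the paper's: the paper takes $\alpha_-=e^{-A}\alpha_u+e^A\alpha_s$, $\alpha_+=e^A\alpha_u-e^{-A}\alpha_s$ for $A\ge 1$, which is the same anisotropic rescaling of the standard pair in a different (already balanced) normalization; your Reeb computation $\tilde\alpha_-(\tilde R_+)=\tfrac{h-1}{2\sqrt h}=-\tilde\alpha_+(\tilde R_-)$ checks out and correctly locates the threshold $(h-1)^2>4h$ via Proposition~\ref{prop:reebvflin}.

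The second part, however, has a genuine gap in the final deduction. From the pointwise constraints $|A|<c\,r_u R$ and $|B|<c\,r_u R^{-1}$ with $R=f_+/f_-$ unknown, averaging does not produce a contradiction with $\int A\,d\mu=\int B\,d\mu=(h-1)\int r_u\,d\mu$: you would need an \emph{upper} bound on $\int A\,d\mu\cdot\int B\,d\mu\le\int|A|\,d\mu\cdot\int|B|\,d\mu< c^2\int r_uR\,d\mu\cdot\int r_uR^{-1}\,d\mu$, but Cauchy--Schwarz bounds $\int r_uR\,d\mu\cdot\int r_uR^{-1}\,d\mu$ from \emph{below} by $\big(\int r_u\,d\mu\big)^2$, and from above by nothing independent of $R$ (take $R$ wildly oscillating). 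Likewise the pointwise bound $|AB|<4hr_u^2$ controls $\int|AB|\,d\mu$, which is unrelated to the product of the averages. The averaging idea can be rescued, but only after restoring linearity: set $\rho_\pm=1/f_\pm$ and divide your two inequalities by $f_+$ and $f_-$ respectively, so they become $\big|(h-1)r_u\rho_++(1+h)X\cdot\rho_+\big|<c\,r_u\rho_-$ and $\big|(h-1)r_u\rho_--(1+h)X\cdot\rho_-\big|<c\,r_u\rho_+$; integrating each against $\mu$ kills the coboundary terms and yields $(h-1)P<cQ$ and $(h-1)Q<cP$ for $P=\int r_u\rho_+\,d\mu$, $Q=\int r_u\rho_-\,d\mu$, whence $(h-1)^2<c^2=4h$, the desired contradiction. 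The paper avoids the invariant measure altogether here: it writes the same linearized inequalities in $\rho_\pm$ as differential inequalities along a single flow line and derives a contradiction from the boundedness of $\rho_\pm$ on the compact manifold, which is more robust (it uses volume preservation only to arrange $f_0=0$). Finally, note that your smoothing remark covers only the first assertion; the non-realizability argument must also be run for the smoothed pair (the paper does this by observing that the whole scheme is stable under $\mathcal{C}^1$-small perturbations of $(\alpha_-,\alpha_+)$).
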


\begin{proof} Let $(\alpha_s, \alpha_u)$ be a defining pair for a volume preserving Anosov flow $\Phi = \{\phi^t\}$. For $A \geq 1$, we define
\begin{align*}
\alpha_- &\coloneqq e^{-A} \alpha_u + e^A \alpha_s,\\
\alpha_+ &\coloneqq e^A \alpha_u - e^{-A} \alpha_s.
\end{align*}
If $\mathrm{dvol}$ is such that $\iota_X \mathrm{dvol} = \alpha_s \wedge \alpha_u$, where $X$ is the vector field generating the flow, then one easily computes
\begin{align*}
f_\pm &= r_u - r_s = 2 r_u, \\ 
g_\pm &= - 2 \sinh(2A) r_u, 
\end{align*}
so $(\alpha_-, \alpha_+)$ is a $\mathcal{C}^1$ closed balanced AL pair, but it is not a $\ell$AL pair since $\vert g_\pm \vert > f_\pm$. This remains true for a suitable smoothing of $(\alpha_-, \alpha_+)$.

Let us assume for simplicity that the pair $(\alpha_-, \alpha_+)$ as above is smooth. We show that there are no functions $h_\pm : M \rightarrow \R_{>0}$ such that $(h_- \alpha_-, h_+ \alpha_+)$ is a $\ell$AL pair. Indeed, let us assume by contradiction that such functions exist. By Lemma~\ref{lem:charanosovlin} they would satisfy the following inequalities:
\begin{align*}
\vert \cosh(2A) h_+ X\cdot h_-  - \sinh(2A) r_u h_- h_+  \vert &< r_u h_-^2, \\
\vert \cosh(2A)  h_- X\cdot h_+ + \sinh(2A) r_u h_- h_+  \vert &< r_u h_+^2.
\end{align*}
Writing $\rho_\pm \coloneqq 1/h_\pm$, these are equivalent to 
\begin{align*}
\vert X\cdot \rho_-  +  \tanh(2A) r_u  \rho_-\vert &< \frac{r_u}{\cosh(2A)} \rho_+, \\
\vert  X\cdot \rho_+ - \tanh(2A) r_u \rho_+\vert &< \frac{r_u}{\cosh(2A)} \rho_-.
\end{align*}
Fixing a point $x \in M$, we define $y_\pm : \R \rightarrow \R_{>0}$ by $$y_\pm(t) \coloneqq \rho_\pm \circ \phi^t(x).$$ There exists $C  >0$ such that $0 <y_\pm < C$. Moreover, these functions satisfy
\begin{align*}
\left\vert \dot{y}_-  +  a y_- \right\vert &< \epsilon y_+, \\
\left\vert  \dot{y}_+ - a y_+ \right\vert &< \epsilon y_-,
\end{align*}
where 
\begin{align*}
a(t) &\coloneqq \tanh(2A) \, r_u \circ \phi^t (x)> 0, \\
\epsilon(t) &\coloneqq \frac{r_u \circ \phi^t(x)}{\cosh(2A)} > 0.
\end{align*}
Since $A \geq 1$, we have $a > 2 \epsilon$.
It follows that for every $T>0$, 
\begin{align*}
\int_0^T ay_+ \, dt &\leq \int_0^T \epsilon y_- \, dt + y_+(T) + y_0(T) \\
&\leq \frac{1}{2} \int_0^T a y_- \, dt + 2C \\
&\leq \frac{1}{2} \int_0^T \epsilon y_+ \, dt + 3C \\
& \leq \frac{1}{4} \int_0^T a y_+ \, dt + 3C,
\end{align*}
hence
$$ \int_0^T a y_+ \, dt \leq 4C.$$
However, $a y_+$ is bounded from below by some positive constant, which contradicts the previous inequality for $T$ large enough.

If $(\alpha_-, \alpha_+)$ is only $\mathcal{C}^1$, this strategy still applies to a suitable smoothing of $(\alpha_-, \alpha_+)$ which is sufficiently $\mathcal{C}^1$-close to $(\alpha_-, \alpha_+)$.
\end{proof}

\begin{rem}
We also expect that there exist (unbalanced) $\ell$AL pairs which are not AL pairs, but the construction seems more delicate.
\end{rem}

        \subsection{Induced Liouville structures}

The ``standard construction'' of Section~\ref{sec:std} yields a pair of contact forms which is both an AL pair and a $\ell$AL pair (after smoothing). If $(\alpha_-, \alpha_+)$ is a pair of contact forms which is both a Liouville pair and a linear Liouville pair, we can consider two Liouville structures on $\R_s \times M$:
\begin{enumerate}[label=(\arabic*)]
    \item The \emph{completion} $\widehat{\lambda}_\mathrm{lin}$ of the Liouville domain $[-1,1]_t \times M$ with the ``linear'' Liouville form $$\lambda_\mathrm{lin} \coloneqq (1-t) \alpha_- + (1+t) \alpha_+,$$
    \item The Liouville structure induced by the ``exponential'' Liouville form $$\lambda_\mathrm{exp} \coloneqq e^{-s} \alpha_- + e^s \alpha_+.$$
\end{enumerate}

Here, the completion of a Liouville domain $V$ is obtained by attaching to $\partial V$ the symplectization $[0, \infty) \times \partial V$ of the contact structure at the boundary. This procedure yields an open manifold $\widehat{V}$ with controlled geometry at infinity. See~\cite[Section 11.1]{CE12} for a precise definition. The next proposition shows in particular that (1) and (2) above produce equivalent Liouville structures on $\R \times M$.

\begin{prop} \label{prop:sameliouv}
Let $(\alpha_-, \alpha_+)$ be a pair of contact forms which is both a Liouville pair and a linear Liouville pair. The Liouville structures $\widehat{\lambda}_\mathrm{lin}$ and $\lambda_\mathrm{exp}$ on $\R \times M$ are Liouville homotopic.
\end{prop}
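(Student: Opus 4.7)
The plan is to realize both Liouville forms on the same underlying manifold $\R \times M$ as forms of the shape $\lambda = F(s)\alpha_- + G(s)\alpha_+$ and then interpolate between them through a family of Liouville forms of this shape. For any such form, computing $d\lambda \wedge d\lambda$ and using that $d\alpha_\pm \wedge d\alpha_\pm = d\alpha_- \wedge d\alpha_+ = 0$ on the 3-manifold $M$, one finds that the Liouville (positively-oriented symplectic) condition is equivalent to the pointwise bilinear inequality
\begin{equation*}
Q(F,G,F',G') \coloneqq G'G f_+ - F'F f_- + F'G g_+ + G'F g_- > 0,
\end{equation*}
where $f_\pm, g_\pm$ are the functions defined in Lemma~\ref{lem:charanosovlin}.

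First I would realize $\widehat{\lambda}_\mathrm{lin}$ on $\R \times M$ as $\lambda_0 = F_0(s)\alpha_- + G_0(s)\alpha_+$ for specific smooth functions $F_0, G_0$: on the core $s \in [-1,1]$ we take $(F_0,G_0)=(1-s, 1+s)$ (so this restriction is exactly $\lambda_\mathrm{lin}$), while on the ends $s \geq 1$ and $s \leq -1$ we take $(F_0, G_0) = (0, 2e^{s-1})$ and $(F_0, G_0) = (2e^{-s-1}, 0)$ respectively (with a brief smoothing interval to ensure $C^\infty$ regularity). One verifies directly that $Q(F_0, G_0, F_0', G_0') > 0$: on the core this is exactly the linear Liouville pair condition $(1+s)(f_+ - g_+) + (1-s)(f_- + g_-) > 0$, and on the ends it reduces to $f_\pm > 0$. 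Since this form extends the Liouville domain $([-1,1]\times M, \lambda_\mathrm{lin})$ via the Liouville flow on its ends, it is Liouville-isomorphic to $\widehat{\lambda}_\mathrm{lin}$.

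Next I would consider the family $\lambda_u = F_u \alpha_- + G_u \alpha_+$ with $F_u = (1-u) F_0 + u e^{-s}$ and $G_u = (1-u) G_0 + u e^s$ for $u \in [0,1]$, giving $\lambda_0 = \widehat{\lambda}_\mathrm{lin}$ and $\lambda_1 = \lambda_\mathrm{exp}$. Since $Q$ is bilinear in $(F,G)$ and $(F', G')$, the expression $Q(F_u, G_u, F_u', G_u')$ is a quadratic polynomial in $u$ whose $u^0$- and $u^2$-coefficients equal $Q(F_0, G_0, F_0', G_0')$ and $Q(e^{-s}, e^s, -e^{-s}, e^s) = f_+ e^{2s} + f_- e^{-2s} + f_0$, both strictly positive by, respectively, the linear Liouville and Liouville pair assumptions. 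Away from the core (where either $F_0$ or $G_0$ vanishes), the cross term is easily handled, and $Q_u > 0$ follows from an AM-GM estimate together with the Liouville pair inequality $-f_0 < 2\sqrt{f_+ f_-}$.

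The main obstacle is the verification on the core $s \in [-1,1]$: the $u(1-u)$-cross term mixes the linear and exponential contributions and does not have an obvious sign. The plan here is to expand $Q_u$ symbolically (writing $A = 1-u$, $B = u$) and show that the $A^2$, $AB$ and $B^2$ coefficients recombine into a positive expression once both the Liouville and the linear Liouville pair conditions are invoked simultaneously; equivalently, one verifies that the discriminant of the quadratic in $u$ is non-positive using $g_+ - g_- < 2\sqrt{f_+ f_-}$, $g_+ < f_+$ and $-g_- < f_-$. If this direct calculation proves too rigid, I would fall back to a two-step homotopy: first deform $\widehat{\lambda}_\mathrm{lin}$ through Liouville forms supported on the cylindrical ends to a form that agrees with $\lambda_\mathrm{exp}$ outside a compact set (using that both have exponential asymptotics), and then connect the two forms on this compact set by a path within the convex space of pairs $(F, G)$ satisfying $Q > 0$ with prescribed boundary values, whose non-emptiness is guaranteed by the same two hypotheses.
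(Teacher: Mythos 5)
Your setup is correct and follows essentially the same route as the paper: the reduction of the Liouville condition for $\lambda = F(s)\alpha_- + G(s)\alpha_+$ to the pointwise inequality $Q = G'Gf_+ - F'Ff_- + F'Gg_+ + G'Fg_- > 0$ is right, your extension of $\lambda_\mathrm{lin}$ to $\R\times M$ with conical ends matches the paper's Step 1--2, and your interpolation $F_u = (1-u)F_0 + ue^{-s}$, $G_u = (1-u)G_0 + ue^{s}$ is exactly the paper's $\psi_\tau(s) = \tau e^s + (1-\tau)\phi(s)$. The problem is that you stop precisely where the actual work begins. The positivity of $Q_u$ on the core (more precisely, in the transition region $s \approx \pm 1$ where $F_0$ or $G_0$ is being turned off) is the entire content of the proof, and you only offer a ``plan'' to expand symbolically, a claim that the discriminant in $u$ is non-positive (which is neither obviously true nor necessary -- positivity on $[0,1]$ does not require a sign on the discriminant, and there is no reason the quadratic should be positive for all real $u$), and a fallback. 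The paper's argument here is genuinely delicate: dividing $Q_u$ by $G_u'G_u$ one reduces to $f_+ + a\,g_- - b\,g_+ + ab\,f_- > 0$ with $a = F_u/G_u$, $b = -F_u'/G_u'$, observes that this holds for $a\in[0,1]$ and $b\in[a-\delta,1]$ (interpolating between the exponential condition at $b=a$ and the linear condition at $b=1$, with $\delta$ from compactness), and then must prove $b - a \geq -\epsilon$ by an explicit two-case computation that uses the convexity and the precise shape of the cutoff near $s=\pm1$, with $\epsilon$ chosen small depending on $(\alpha_-,\alpha_+)$. None of this is automatic, and your interpolation needs exactly the same analysis.

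Your fallback is also not a repair: the set of pairs $(F,G)$ satisfying $Q(F,G,F',G')>0$ is cut out by a condition that is \emph{quadratic} in $(F,G)$ (since $(F',G')$ is determined by $(F,G)$), so it is not convex, and ``connecting the two forms by a path within the convex space of pairs satisfying $Q>0$'' assumes away the difficulty. Two smaller gaps: (i) you assert $Q(F_0,G_0,F_0',G_0')>0$ ``on the core'' and ``on the ends'' but your $F_0,G_0$ have a smoothing interval near $s=\pm1$ that is neither, and positivity there requires the estimate carried out in the paper's Step 1 (it does hold, using $f_+-g_+>0$ and $f_-+g_->0$ together with smallness of the cutoff, but it must be checked); (ii) identifying your extended form with $\widehat{\lambda}_\mathrm{lin}$ up to Liouville homotopy needs the completion lemma \cite[Lemma 11.6]{CE12} rather than a bare assertion of Liouville isomorphism, since your conical ends do not literally agree with the completion's collar (the derivatives $(F_0',G_0')$ jump at $s=\pm1$ before smoothing). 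As it stands, the proposal is an accurate outline of the paper's strategy with its central estimate left unproved.
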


\begin{proof} We choose an arbitrary volume form $\mathrm{dvol}$ and we define $f_\pm$, $g_\pm$ as in Lemma~\ref{lem:charanosovlin}. We also choose $A > 0$ such that $f_\pm < A$ and $\vert g_\pm \vert < A$. We proceed in 3 steps.

\textit{Step 1 : extending $\lambda_\mathrm{lin}$.} Let $\epsilon > 0$. We choose a smooth function $\phi = \phi_\epsilon : \R \rightarrow \R_{\geq 0}$ satisfying
\begin{itemize}
\item $\phi(s) = 0$ for $s \leq -1-\epsilon$,
\item $\phi(s)>0$ for $s > -1-\epsilon$,
\item $\phi(s) = 1+s$ for $s \geq -1+\epsilon$,
\item $\phi$ is non-decreasing and convex.
\end{itemize} 

\begin{figure}[t]
    \begin{center}
        \begin{picture}(80, 60)(0,0)
        \put(0,0){\includegraphics[width=80mm]{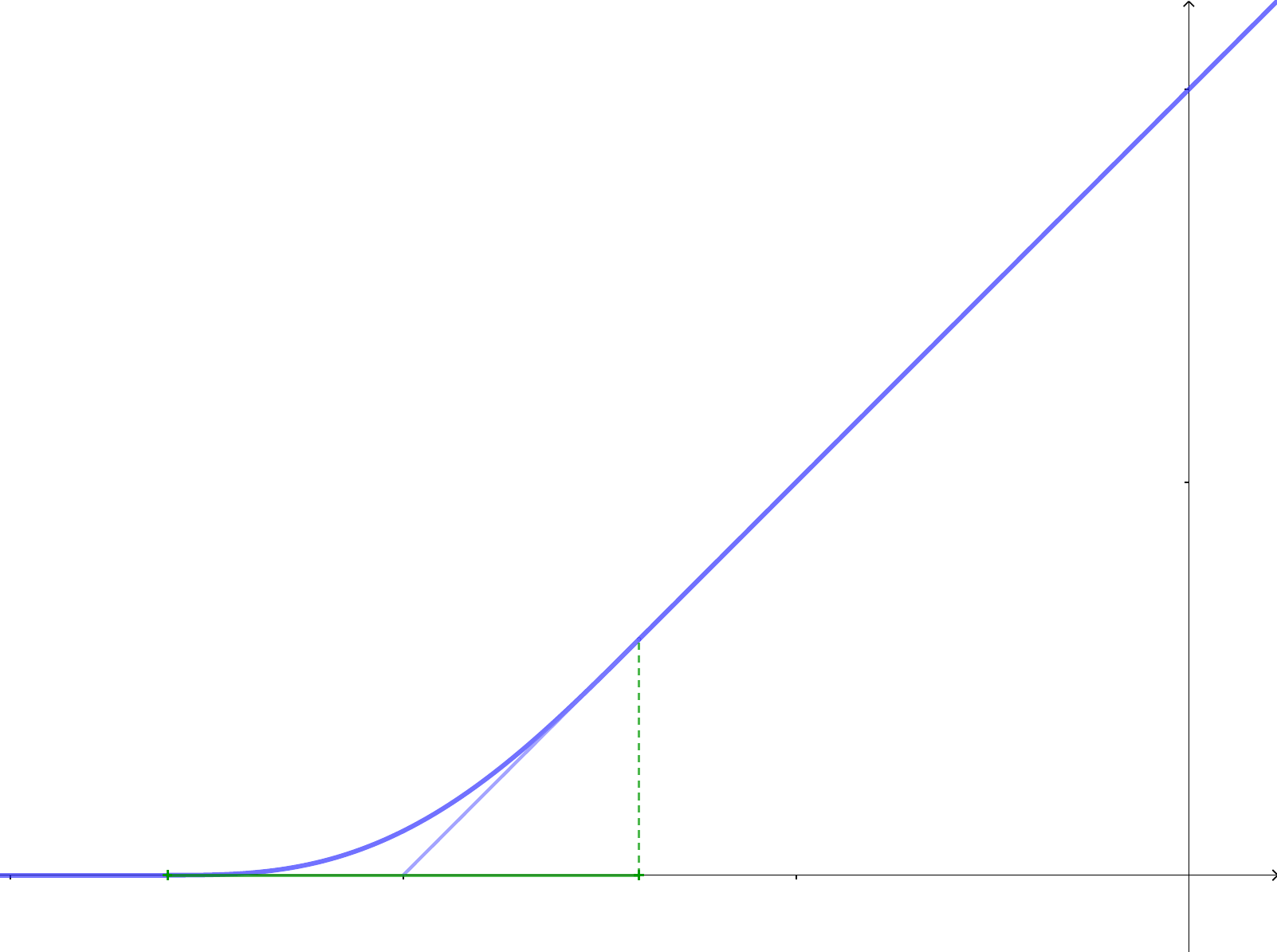}}
        \put(4,1){$-1-\epsilon$}
        \put(22.5,1){$-1$}
        \put(32.8,1){$-1+\epsilon$}
        \put(71.5,1){$0$}
        \put(71.5, 53.8){$1$}
        \color{blue1}
        \put(50.5,36){$\phi$}
        \end{picture}
        \captionsetup{width=100mm}
        \caption{The function $\phi$.}
    \end{center}
    \label{fig:phi}
\end{figure}

We claim that for $\epsilon$ sufficiently small, the $1$-form 
$$\lambda_0 \coloneqq \phi(-s) \alpha_- + \phi(s) \alpha_+$$
on $\R_s \times M$ is Liouville. On $[-1+\epsilon, 1-\epsilon] \times M$, it coincides with $\lambda_\mathrm{lin}$ which is Liouville. On $(-\infty, -1-\epsilon] \times M$, it coincides with $(1-s) \alpha_-$ which is Liouville since $\alpha_-$ is a negative contact form. On $[1+ \epsilon, \infty) \times M$, it coincides with $(1+s) \alpha_+$ which is Liouville since $\alpha_+$ is positive contact form. If $s \in [1-\epsilon, 1+\epsilon]$, we compute:
\begin{align*}
d\lambda_0 \wedge d\lambda_0 &= \left\{ (1+s) \big(f_+ -\phi'(-s) g_+\big) + \phi(-s) \big(\phi'(-s) f_- + g_-\big) \right\} ds \wedge \mathrm{dvol} \\
&= F \, ds \wedge \mathrm{dvol}.
\end{align*}
Note that 
$$ 0 \leq \phi(-s) \leq \epsilon, \qquad 0 \leq \phi'(-s) \leq 1,$$
hence
\begin{align*}
    F &\geq (2-\epsilon) \min\{f_+, f_+ - g_+\} + 2 \epsilon A,
\end{align*}
where $f_+ > 0$ and $f_+ - g_+ > 0$ by Lemma~\ref{lem:charanosovlin}, thus $F>0$ for $\epsilon$ small enough. The case $s \in [-1-\epsilon, -1 + \epsilon]$ is similar.

\textit{Step 2: $\lambda_0$ is Liouville homotopic to $\widehat{\lambda}_\mathrm{lin}$.} We will use the following elementary fact: if two Liouville structures on a manifold with boundary $V$ are Liouville homotopic, then their completions on $\widehat{V}$ are Liouville homotopic; see~\cite[Lemma 11.6]{CE12}. By definition, $([-1-\epsilon, 1+\epsilon] \times M, \lambda_0)$ is a Liouville domain whose completion is exactly $(\R \times M, \lambda_0)$. Moreover, if $\epsilon$ is small enough, then $([-1-\epsilon, 1+\epsilon] \times M, \lambda_0)$ and $([-1+\epsilon, 1-\epsilon] \times M, \lambda_0) = ([-1+\epsilon, 1-\epsilon] \times M, \lambda_\mathrm{lin})$ are Liouville domains which are Liouville homotopic (after identifying $[-1-\epsilon, 1+\epsilon]$ and $[-1+\epsilon, 1-\epsilon]$), and $([-1+\epsilon, 1-\epsilon] \times M, \lambda_\mathrm{lin})$ and $([-1,1] \times M, \lambda_\mathrm{lin})$ are also Liouville homotopic (after identifying $[-1+\epsilon, 1-\epsilon]$ and $[-1,1]$). This shows that $([-1-\epsilon, 1+\epsilon] \times M, \lambda_0)$ and $([-1,1] \times M, \lambda_\mathrm{lin})$ are Liouville homotopic (after identifying $[-1-\epsilon, 1+\epsilon]$ with $[-1,1]$), and so are their completions.

\textit{Step 3: $\lambda_0$ is Liouville homotopic to $\lambda_\mathrm{exp}$.}
For $\tau \in [0,1]$, we set $$\psi_\tau(s) \coloneqq \tau e^s + (1-\tau) \phi(s),$$ and 
$$\lambda_\tau \coloneqq \psi_\tau(-s) \alpha_- + \psi_\tau(s) \alpha_+.$$
The family $\{\lambda_\tau\}_{\tau \in [0,1]}$ interpolates between $\lambda_0$ and $\lambda_1=  e^{-s} \alpha_- + e^s \alpha_+$. It is enough to show that for every $\tau \in (0,1)$, $d \lambda_\tau \wedge d\lambda_\tau$ is a positive volume form on $\R \times M$. By symmetry, it is enough to show it for $s \geq 0$. The computation of $d\lambda_\tau \wedge d\lambda_\tau$ reveals that the latter is equivalent to
\begin{align} \label{ineq:atbt}
f_+ + a_\tau(s) g_- - b_\tau(s) g_+ + a_\tau(s)b_\tau(s) f_- > 0,
\end{align}
where
\begin{align*}
a_\tau(s) \coloneqq \frac{\psi_\tau(-s)}{\psi_\tau(s)}, \qquad b_\tau(s) = \frac{\psi'_\tau(-s)}{\psi'_\tau(s)}.
\end{align*}
It is easy to check that for $\tau \in (0,1)$ and $s \geq 0$, $0 \leq a_\tau(s) \leq 1$ and $0 \leq b_\tau(s) \leq 1$.
Since $(\alpha_-, \alpha_+)$ is both an exponential and a linear Liouville pair, we have that for every $a \in [0,1]$,
\begin{align*}
f_+ + a g_- - ag_+ + a^2 f_- &> 0,\\
f_+ + a g_- - g_+ + a f_- &> 0,
\end{align*}
so for every $a \in [0,1]$ and $b \in [a,1]$, we have
\begin{align}
 f_+ + a g_- - bg_+ + ab f_- > 0. \label{ineq:ab}
 \end{align}
By compactness, there exists $\delta > 0$, only depending on $(\alpha_-, \alpha_+)$, such that for every $a \in [0,1]$ and $b \in [a-\delta, 1]$, the inequality \eqref{ineq:ab} is satisfied. We claim that for every $\tau \in (0,1)$ and $s\geq 0$, we have
\begin{align} \label{ineq:weird}
    b_\tau(s) - a_\tau(s) \geq  - \epsilon.
\end{align}
Indeed, fixing $\tau \in (0,1)$, we consider two cases.

\textit{Case 1.} If $s \in [0, 1- \epsilon)\cup [1+\epsilon, \infty)$,
        \begin{align*}
            \psi_\tau(s) &\geq \psi'_\tau(s),\\ 
            \psi'_\tau(-s) &\geq \psi_\tau(-s),
        \end{align*}
        and~\eqref{ineq:weird} follows trivially since the left-hand side is non-negative.

\textit{Case 2.} If $s \in [1-\epsilon, 1+\epsilon)$, 
            \begin{align*}
                \psi_\tau(s) &= \tau e^s + (1-\tau) (1+s) \geq 1,\\
                \psi_\tau'(s) &= \tau e^s + (1-\tau) \geq 1,
            \end{align*}
            and we compute
            \begin{align*}
                \psi'_\tau(-s) \psi_\tau(s) &= \tau^2 + \tau(1-\tau) \left\{(1+s) e^{-s} + e^s \phi'(-s) \right\} + (1-\tau)^2 (1+s) \phi'(-s),\\
                \psi'_\tau(s) \psi_\tau(-s) &= \tau^2 + \tau(1-\tau) \left\{e^s \phi(-s) + e^{-s}\right\} + (1-\tau)^2 \phi(-s),
            \end{align*}
            hence
            \begin{align*}
                \psi'_\tau(-s) \psi_\tau(s) - \psi'_\tau(s) \psi_\tau(-s) &= \begin{multlined}[t]  (1-\tau)\Big\{\tau \Big( \, \overset{(1)}{\overbrace{e^s \big(\phi'(-s) - \phi(-s)\big) + s e^{-s}}} \, \Big)  \\ + (1-\tau) \Big( \, \underset{(2)}{\underbrace{(1+s)\phi'(-s) - \phi(-s)}} \, \Big) \Big\}.
                \end{multlined}
            \end{align*}
            Since $\phi'(-s) \geq 0$ and $0 \leq \phi(-s) \leq \epsilon$, 
            \begin{align*}
            \mathrm{(1)} &\geq - e^{1+\epsilon} \epsilon + (1-\epsilon) e^{-(1+\epsilon)}, \\
            \mathrm{(2)} & \geq - \epsilon.
            \end{align*}
            For $\epsilon$ small enough, say $\epsilon \leq 1/100$, $\mathrm{(1)} \geq 0$, and~\eqref{ineq:weird} follows.

This shows that for $\epsilon$ small enough, only depending on $(\alpha_-, \alpha_+)$, the inequality~\eqref{ineq:atbt} is satisfied for every $\tau \in (0,1)$ and $s\geq 0$. The case $s \leq 0$ can be treated similarly.
\end{proof}
	
\begin{rem}
As mentioned in the introduction, we do not know if Theorem~\ref{thm:contract} and Theorem~\ref{thm:homeq} are also true for linear Anosov Liouville pairs. The proof of Theorem~\ref{thm:homeq} would immediately adapt to the linear case, provided that the space of $\ell$AL pairs supporting a given flow is (weakly) contractible. Our attempts at proving this fact for $\ell$AL pairs were fruitless because of the complexity and the lack of symmetry of the equations we obtained.
\end{rem}

	\appendix
	\section{Smoothing lemmas} \label{appendixA}

This appendix concerns useful smoothing lemmas which are required to extend the results of this paper to Anosov flows generated by $\mathcal{C}^1$ vector fields, as their weak-stable and weak-unstable bundles are not necessarily $\mathcal{C}^1$. The approach can also be used to bypass Hozoori's delicate approximation techniques in~\cite[Section 4]{H22a}. We state the results in greater generality than needed. $M$ now denotes a closed $n$-dimensional manifold ($n \geq 1$) and $X$ denotes a non-singular vector field on $M$ of class $\mathcal{C}^k$, $1 \leq k \leq \infty$ (without any Anosovity condition). We fix an arbitrary auxiliary metric on $M$.

The first smoothing lemma follows from~\cite[Lemma 4.3]{H22a} and the regular approximation of differentiable functions by smooth ones.

\begin{lem} \label{lem:smooth1}
Let $f : M \rightarrow \R$ be a continuous function which is continuously differentiable along $X$. Then for every $\epsilon > 0$, there exists a smooth function $f^\epsilon : M \rightarrow \R$ satisfying
	\begin{align*}
	\vert f^\epsilon - f \vert_{\mathcal{C}^0} \leq \epsilon \qquad and \qquad \vert X \cdot f^\epsilon - X \cdot f \vert_{\mathcal{C}^0} \leq \epsilon.
	\end{align*}
\end{lem}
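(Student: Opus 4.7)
The plan is to combine a partition of unity argument with mollification in flow-box coordinates, leveraging the fact that the hypothesis $f \in \mathcal{C}^0_X$ makes differentiation along $X$ compatible with convolution.

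Since $X$ is non-singular, the flow box theorem provides a finite open cover $\{U_i\}_{i=1}^N$ of $M$ with coordinate charts $\varphi_i : U_i \xrightarrow{\sim} \Omega_i \subset \R^n$ straightening $X$ to $\partial/\partial y_1$, along with a subordinate smooth partition of unity $\{\chi_i\}$. For each $i$, I would transport $\chi_i f$ to $\R^n$ via $\varphi_i$ to obtain a compactly supported continuous function $g_i$. The $y_1$-derivative of $g_i$ exists classically and continuously, given by $\partial_{y_1} g_i = \bigl((X \cdot \chi_i) f + \chi_i (X \cdot f)\bigr) \circ \varphi_i^{-1}$, since $X \cdot \chi_i$ is smooth and $X \cdot f$ is continuous by assumption. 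Convolving with a standard mollifier $\phi_\delta$ of radius $\delta$ gives a smooth compactly supported function $g_i^\delta \coloneqq g_i \ast \phi_\delta$, and the candidate approximant is $f^\epsilon \coloneqq \sum_i g_i^\delta \circ \varphi_i$, extended by zero outside each $U_i$.

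The convergence $f^\epsilon \to f$ in $\mathcal{C}^0$ as $\delta \to 0$ is immediate from uniform convergence of $g_i^\delta \to g_i$ for each $i$ and the identity $\sum_i \chi_i = 1$. For the derivative, the key observation is that the existence and continuity of $\partial_{y_1} g_i$ justifies the identity $\partial_{y_1} g_i^\delta = (\partial_{y_1} g_i) \ast \phi_\delta$ via a single integration by parts in the $y_1$ variable; the mollified right-hand side then converges uniformly to $\partial_{y_1} g_i$. Summing over $i$ and transporting back to $M$ gives $X \cdot f^\epsilon \to \sum_i \partial_{y_1} g_i \circ \varphi_i = X \cdot f$ in $\mathcal{C}^0$, and choosing $\delta$ small enough makes both errors at most $\epsilon$.

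The only conceptual subtlety, which I expect to be the main (though mild) point requiring care, is the commutation of $\partial_{y_1}$ with convolution: it genuinely requires $\partial_{y_1} g_i$ to be classically defined and continuous, as opposed to being merely a distributional derivative. This is precisely what the hypothesis $f \in \mathcal{C}^0_X$ delivers, and it is also what distinguishes this statement from the trivial smoothing of a continuous function, where the naive mollification would produce a derivative error that does not vanish as $\delta \to 0$. Everything else is standard mollifier arithmetic.
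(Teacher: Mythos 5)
Your proof is correct and follows essentially the same route as the paper, which simply cites~\cite[Lemma 4.3]{H22a} for exactly this flow-box/partition-of-unity/mollification scheme (the same scheme is written out in the paper's proof of Lemma~\ref{lem:smooth2}), and you correctly isolate the one real point, namely that the classical continuity of $\partial_{y_1} g_i$ is what licenses commuting $\partial_{y_1}$ with convolution. The only detail worth adding is that when $X$ is merely $\mathcal{C}^k$ with $k<\infty$ the flow-box charts $\varphi_i$ are only $\mathcal{C}^k$, so your $f^\epsilon$ is a priori only $\mathcal{C}^k$ and one must finish with a standard $\mathcal{C}^1$-close approximation by a genuinely smooth function --- precisely the ``regular approximation of differentiable functions by smooth ones'' the paper appends to its citation.
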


In other words, with the notations of Definition~\ref{def:reg}, $\mathcal{C}^\infty$ is dense in $\mathcal{C}^0_X$. The same holds with $\mathcal{C}^\ell_X$ in place of $\mathcal{C}^0_X$, for $0 \leq \ell \leq k-1$. We will need a similar result for $1$-forms on $M$.

\begin{lem} \label{lem:smooth2}
The space of $\mathcal{C}^k$ $1$-forms on $M$ vanishing along $X$ is dense in $\Omega_{X,\ell}^1$ for $0 \leq \ell \leq k-1$.
\end{lem}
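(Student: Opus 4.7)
The plan is to reduce the statement to Lemma~\ref{lem:smooth1} (or rather its $\mathcal{C}^\ell_X$-version stated in the ensuing remark) via a two-step procedure: first approximate $\alpha$ by a $\mathcal{C}^k$ $1$-form \emph{without} imposing the constraint $\iota_X = 0$, working in the enlarged norm $\vert \cdot \vert_{\mathcal{C}^\ell} + \vert \mathcal{L}_X \cdot \vert_{\mathcal{C}^\ell}$; then project onto the subspace of $1$-forms annihilating $X$. To set up the projection, fix a smooth $1$-form $\theta$ on $M$ with $\theta(X) \equiv 1$ (take $\theta = X^\flat/\vert X\vert^2$ for any auxiliary smooth Riemannian metric). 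The map $\beta \mapsto \beta - \beta(X)\theta$ sends any $1$-form to one vanishing along $X$ and is the identity on $\Omega^1_{X,\ell}$.

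For the unconstrained approximation, I would fix a finite cover of $M$ by smooth coordinate charts $\{(U_i, \phi_i)\}$ and a smooth partition of unity $\{\chi_i\}$. On each $U_i$ write $\chi_i \alpha = \sum_j b_{ij}\, dx^j$ with the coefficients $b_{ij}$ compactly supported. Expanding $\mathcal{L}_X(\chi_i \alpha)$ in the same coordinates and using that $X$ is $\mathcal{C}^k$ (with $\ell \leq k-1$) shows that both $b_{ij}$ and $X \cdot b_{ij}$ are of class $\mathcal{C}^\ell$. The $\mathcal{C}^\ell_X$-version of Lemma~\ref{lem:smooth1} then provides smooth functions $b_{ij}^\epsilon$ with $b_{ij}^\epsilon \to b_{ij}$ and $X \cdot b_{ij}^\epsilon \to X \cdot b_{ij}$ in $\mathcal{C}^\ell$. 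Assembling $\beta^\epsilon := \sum_{i,j} b_{ij}^\epsilon\, dx^j$ yields a smooth $1$-form on $M$ satisfying $\beta^\epsilon \to \alpha$ and $\mathcal{L}_X \beta^\epsilon \to \mathcal{L}_X \alpha$ in $\mathcal{C}^\ell$. The projected approximation $\alpha^\epsilon := \beta^\epsilon - \beta^\epsilon(X)\theta$ is then a $\mathcal{C}^k$ $1$-form vanishing along $X$ (note that $\beta^\epsilon(X)$ is only $\mathcal{C}^k$, not smooth, because $X$ is merely $\mathcal{C}^k$).

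The final step is to verify $\alpha^\epsilon \to \alpha$ in $\mathcal{C}^\ell_X$. The key identity is $X \cdot \beta^\epsilon(X) = (\mathcal{L}_X \beta^\epsilon)(X)$, valid because $\mathcal{L}_X X = 0$: combined with $\iota_X \alpha = 0$, it yields $\beta^\epsilon(X) \to 0$ and $X \cdot \beta^\epsilon(X) \to (\mathcal{L}_X \alpha)(X) = 0$ in $\mathcal{C}^\ell$. Together with $\theta,\, \mathcal{L}_X \theta \in \mathcal{C}^\ell$ (the latter using $\ell \leq k-1$), this shows that $\beta^\epsilon(X)\theta \to 0$ in the $\mathcal{C}^\ell_X$ sense, whence $\alpha^\epsilon \to \alpha$ in $\mathcal{C}^\ell_X$. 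The main (and essentially the only) subtlety is this continuity of the projection onto $\ker \iota_X$; everything else is a routine combination of partition-of-unity localization and Lemma~\ref{lem:smooth1}.
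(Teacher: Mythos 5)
Your proof is correct, but it takes a genuinely different route from the paper's. The paper works in flow-boxes adapted to $X$ (charts in which $X=\partial_t$) and builds the approximant \emph{inside} the subspace $\ker\iota_X$ from the start: it prescribes a smooth form on a transversal slice together with a smooth approximation of $\mathcal{L}_{\partial_t}\alpha$, integrates in the $t$-direction, and cuts off with a bump function in $t$. You instead smooth the coefficients in arbitrary charts without imposing $\iota_X=0$, reducing to the function-level Lemma~\ref{lem:smooth1}, and then apply the projection $\beta\mapsto\beta-\beta(X)\theta$; the real content of your argument is that this projection is continuous for the norm $\vert\cdot\vert_{\mathcal{C}^\ell}+\vert\mathcal{L}_X\cdot\vert_{\mathcal{C}^\ell}$ and fixes $\Omega^1_{X,\ell}$, which you correctly extract from the identities $X\cdot\beta(X)=(\mathcal{L}_X\beta)(X)$ and $(\mathcal{L}_X\alpha)(X)=\mathcal{L}_X(\alpha(X))=0$. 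Your version is more modular (smoothing and constraint are decoupled, and only Lemma~\ref{lem:smooth1} is invoked) at the cost of producing approximants that are merely $\mathcal{C}^k$ rather than smooth; but that is exactly what the statement asks for, and the paper's construction also only yields $\mathcal{C}^k$ forms since the flow-box coordinates $\phi_i$ are only $\mathcal{C}^k$. One small detail to patch: the smooth functions $b_{ij}^\epsilon$ furnished by Lemma~\ref{lem:smooth1} are defined on all of $M$ and need not be supported in $U_i$, so $b_{ij}^\epsilon\,dx^j$ is not globally defined; multiply $b_{ij}^\epsilon$ by a fixed smooth cutoff equal to $1$ on $\operatorname{supp}\chi_i$ and supported in $U_i$ (this is harmless for the $\mathcal{C}^\ell_X$-estimates since the cutoff and its $X$-derivative are of class $\mathcal{C}^\ell$ and the limit is unchanged where the cutoff equals $1$).
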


\begin{proof}
This is a straightforward adaptation of the proof of~\cite[Lemma 4.3]{H22a}.

By compactness of $M$, we can find a positive real number $\tau > 0$ and a finite collection $\{ (U_i, V_i, \phi_i)\}_{1 \leq i \leq N}$ where
\begin{itemize}
	\item $V_i \subset U_i \subset M$ are open subsets of $M$,
	\item $\{V_i\}_{1 \leq i \leq N}$ is a covering of $M$,
	\item $\phi_i : U_i \rightarrow  (-2\tau, 2\tau)_t \times D$ is a $\mathcal{C}^k$ diffeomorphism such that $\phi_i(V_i) = (-\tau, \tau) \times D$ and $d\phi_i(X) = \partial_t$. Here, $D$ denotes the open unit disk in $\R^{n-1}$.
\end{itemize}
Such a collection can be obtained by taking a finite collection of sufficiently small flow-boxes for $X$ covering $M$.

Let $\{\psi_i\}_{1 \leq i \leq N}$ be a partition of unity subordinate to the open covering $\{V_i\}_{1 \leq i \leq N}$. For every $i$, the support of $\psi_i$ is contained in $V_i$ so we can find $0 < r < 1$ such that the support of $\psi_i \circ \phi_i^{-1}$ is contained in $(-\tau, \tau) \times D_r$, where $D_r$ denotes the open disk of radius $r$.

Let $h : \R \rightarrow \R_{\geq 0}$ be a smooth bump function satisfying
\begin{itemize}
\item For $|t| \leq \tau$, $h(t) = 1$,
\item For $|t| \geq 2\tau $, $h(t) = 0$,
\item $h$ is non-decreasing on $(-\infty, 0)$ and non-increasing on $(0, \infty)$.
\end{itemize}

\begin{figure}[t]
    \begin{center}
        \begin{picture}(80, 54)(0,0)
        \put(0,3){\includegraphics[width=80mm]{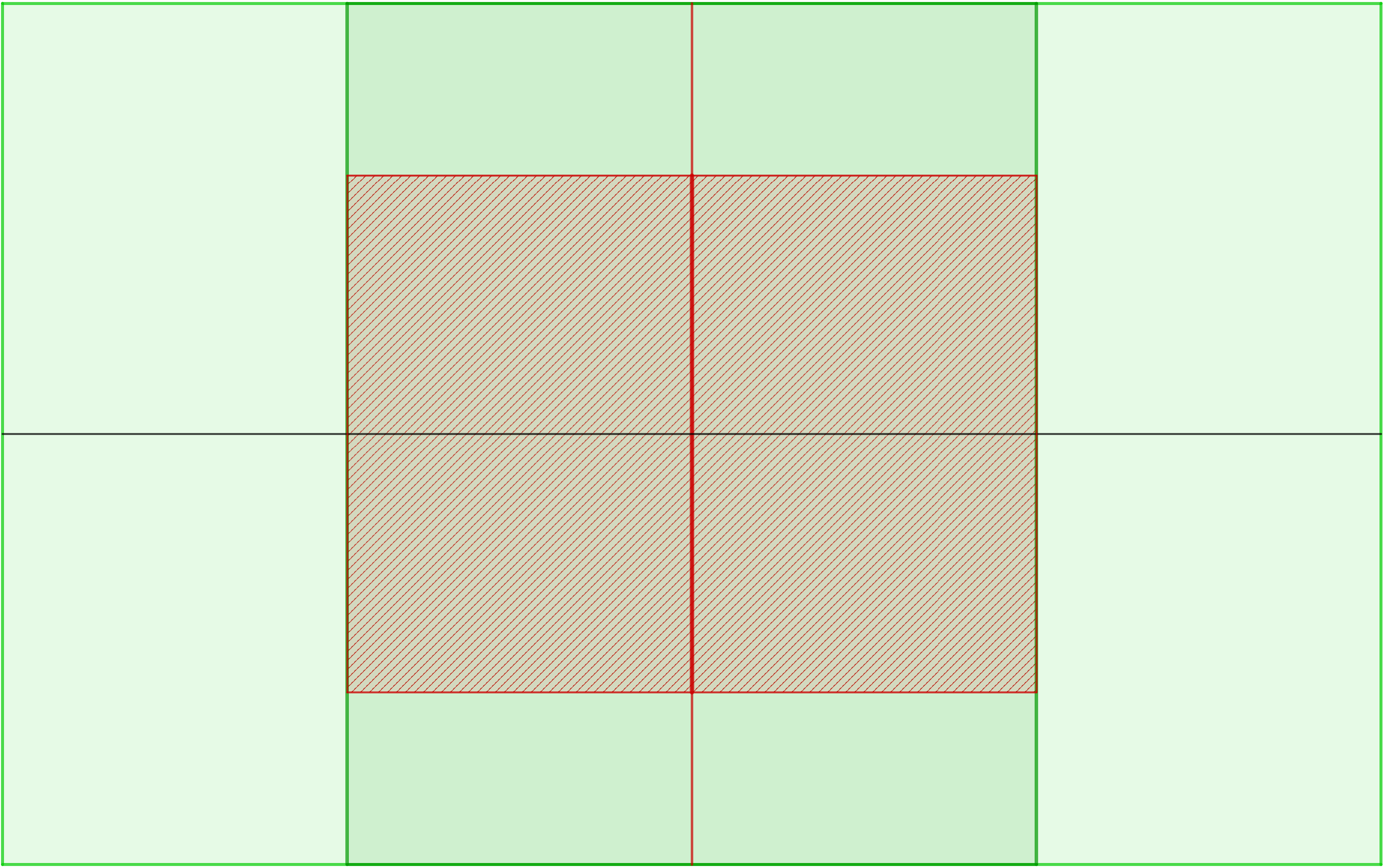}}
        \put(-3,0){$-2\tau$}
        \put(17,0){$-\tau$}
        \put(39,0){$0$}
        \put(59,0){$\tau$}
        \put(77,0){$2\tau$}
        \color{red1}
        \put(41,35){$D_r$}
        \put(41,47){$D$}
        \end{picture}
        \captionsetup{width=100mm}
        \caption{The nested open sets in the argument. The support of $\alpha'_i$ is contained in the hashed region.}
    \end{center}
    \label{fig:box}
\end{figure}

Let $\alpha \in \Omega^1_{X, \ell}$. By definition, $\alpha = \sum_{i=1}^N \psi_i \alpha$. For every $i$, we write $\alpha_i \coloneqq \psi_i \alpha$ and $\alpha'_i \coloneqq (\phi_i)_* \alpha_i$. We have reduced the problem to a single flow-box $(-2\tau, 2\tau) \times D$.

Let $\epsilon > 0$. In what follows, the symbol ``$\lesssim$'' means ``less or equal than up to a constant factor that does not depend on $\epsilon$''. For a fixed $i$, let $\beta^0_i$ be a smooth $1$-form on $D$ with support contained in  $D_r$ satisfying
$$\left \vert \beta^0_i - {\alpha_i'}_{\vert \{0\} \times D} \right \vert_{\mathcal{C}^\ell} \leq \epsilon.$$ 
By definition, $\mathcal{L}_{\partial t} \alpha_i'$ is $\mathcal{C}^\ell$ and vanishes along $\partial_t$. Therefore, we can find a smooth $1$-form $\eta_i$ on $(-2 \tau, 2 \tau)\times D$ with support contained in $(-\tau, \tau) \times D_r$ satisfying
\begin{align*}
\eta_i(\partial_t) &= 0, \\
\left  \vert \eta_i - \mathcal{L}_{\partial t} \alpha_i' \right \vert_{\mathcal{C}^\ell} &\leq \epsilon.
\end{align*}
We can extend $\beta^0_i$ to a smooth $1$-form $\beta_i$ on $(-2\tau, 2\tau) \times D$ by setting
\begin{align*}
\beta_i(\partial_t) &\coloneqq 0, \\
\mathcal{L}_{\partial t} \beta_i &\coloneqq \eta_i.
\end{align*} 
Finally, we define $\beta_i' \coloneqq h  \beta_i$. This is a smooth $1$-form with support in $(-2\tau, 2\tau) \times D_r$. Note that at a point $(t,x) \in (-2\tau, 2\tau) \times D$, we have
\begin{align*}
\big(\beta_i - \alpha_i'\big)_{(t,x)} = \left(\beta_i^0 - {\alpha_i'}_{\vert \{0\} \times D}\right)_{x} + \int_0^t \left( \eta_i - \mathcal{L}_{\partial t} \alpha_i' \right)_{(s,x)} ds,
\end{align*}
so
\begin{align*}
\left \vert \beta_i - \alpha_i'  \right \vert_{\mathcal{C}^\ell} &\leq \left \vert \beta_i^0 - {\alpha_i'}_{\vert \{0\} \times D} \right \vert_{\mathcal{C}^\ell} + 2\tau \left \vert \eta_i - \mathcal{L}_{\partial t} \alpha_i' \right \vert_{\mathcal{C}^\ell} \lesssim \epsilon, \\
\left \vert \mathcal{L}_{\partial t} \beta_i -  \mathcal{L}_{\partial t} \alpha_i' \right \vert_{\mathcal{C}^\ell} &=  \left \vert \eta_i -  \mathcal{L}_{\partial t} \alpha_i' \right \vert_{\mathcal{C}^\ell} \leq \epsilon.
\end{align*}
Since the support of $\alpha_i'$ is contained in $(-\tau, \tau) \times D_r$, we readily get
\begin{align*}
\left \vert {\beta_i}_{\vert ((-2\tau, -\tau) \cup (\tau, 2\tau))\times D} \right \vert_{\mathcal{C}^\ell} \lesssim \epsilon.
\end{align*}
Moreover, 
\begin{itemize}
\item On $((-2\tau, -\tau) \cup (\tau, 2\tau))\times D$,
	\begin{align*}
	\beta_i' - \alpha_i' &= h \beta_i,\\
	\mathcal{L}_{\partial t} \beta_i' - \mathcal{L}_{\partial t} \alpha_i' &= (\partial_t h) \beta_i,
	\end{align*}
\item On $(-\tau, \tau) \times D$,
	\begin{align*}
	\beta_i' - \alpha_i' &= \beta_i - \alpha_i',\\
	\mathcal{L}_{\partial t} \beta_i' - \mathcal{L}_{\partial t} \alpha_i' &=  \eta_i - \mathcal{L}_{\partial t} \alpha_i',
	\end{align*}
\end{itemize}
and we obtain
\begin{align*}
\left \vert \beta_i' - \alpha_i'  \right \vert_{\mathcal{C}^\ell} &\lesssim \epsilon, \\
\left \vert \mathcal{L}_{\partial t} \beta_i' -  \mathcal{L}_{\partial t} \alpha_i' \right \vert_{\mathcal{C}^\ell} & \lesssim \epsilon.
\end{align*}
Finally, we define
$$\beta \coloneqq \sum_i \phi_i^* \beta_i',$$
so that $\beta$ is a $\mathcal{C}^k$ $1$-form on $M$ satisfying $\beta(X) = 0$, and
\begin{align*}
\left \vert \beta - \alpha \right \vert_{\mathcal{C}^\ell} &\leq \sum_{i=1}^N \left \vert \phi_i^* \beta_i' - \phi_i^* \alpha_i' \right \vert_{\mathcal{C}^\ell} \lesssim \sum_{i=1}^N \left \vert \beta_i' - \alpha_i' \right \vert_{\mathcal{C}^\ell} \lesssim \epsilon,\\
\left \vert \mathcal{L}_X  \beta - \mathcal{L}_X  \alpha \right \vert_{\mathcal{C}^\ell} & \leq \sum_{i=1}^N \left \vert \phi_i^* (\mathcal{L}_{\partial t}  \beta_i') -  \phi_i^* (\mathcal{L}_{\partial t} \alpha_i') \right \vert_{\mathcal{C}^\ell} \lesssim \sum_{i=1}^N \left \vert \mathcal{L}_{\partial t} \beta_i' -  \mathcal{L}_{\partial t} \alpha_i' \right \vert_{\mathcal{C}^\ell} \lesssim \epsilon.
 \end{align*}
This finishes the proof.
\end{proof}

	\section{Almost volume preserving Anosov flows} \label{appendixB}

In this appendix, we prove a technical result used in the proof of Theorem~\ref{thm:supporting}. Let us recall the setup. $\Phi$ is a smooth Anosov flow on a closed oriented $3$-manifold $M$, generated by a vector field $X$. For an adapted metric $g$, $r_u > 0$ and $r_s<0$ denote the expansion rates in the unstable and stable directions respectively. The divergence of $X$ for this metric is simply $\mathrm{div}_g X = r_u + r_s$. We say that $\Phi$ is \textbf{almost volume preserving} if it satisfies one of the following equivalent conditions (compare with~\eqref{eq:cond}):

\medskip

\begin{itemize}
\item[(C1)] For every $\epsilon >0$, there exists a smooth function $f_\epsilon : M \rightarrow \R$ satisfying \label{condC1}
\begin{align*}
\left\vert \mathrm{div}_g X + X \cdot f_\epsilon \right\vert \leq \epsilon.
\end{align*}
\item[(C2)] For every $\epsilon >0$, there exists a smooth volume form $\mathrm{dvol}_\epsilon$ on $M$ satisfying 
\begin{align*}
\vert \mathrm{div}_\epsilon X \vert = \left\vert \frac{\mathcal{L}_X \mathrm{dvol}_\epsilon}{\mathrm{dvol}_\epsilon } \right\vert \leq \epsilon,
\end{align*}
where $\mathrm{div}_\epsilon X$ denotes the divergence of $X$ with respect to $\mathrm{dvol}_\epsilon$.
\end{itemize}

\begin{prop} If $\Phi$ is a smooth almost volume preserving Anosov flow on $M$, then $\Phi$ is volume preserving.
\end{prop}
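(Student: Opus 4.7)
The plan is to show that every ergodic $\Phi$-invariant Borel probability measure is absolutely continuous with respect to Lebesgue on $M$, which by Axiom A theory will force $\Phi$ to be transitive, and then to produce a smooth invariant volume form by applying Liv\v{s}ic's cohomological theorem to $-\mathrm{div}_g X$.

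First I would test the condition (C1) against invariant measures. For any $\Phi$-invariant probability measure $\mu$, invariance gives $\int X \cdot f_\epsilon \, d\mu = 0$, so integrating the bound $|\mathrm{div}_g X + X \cdot f_\epsilon| \leq \epsilon$ against $\mu$ yields $\big| \int \mathrm{div}_g X \, d\mu \big| \leq \epsilon$. Letting $\epsilon \to 0$ gives
$$ \int (r_u + r_s) \, d\mu \;=\; \int \mathrm{div}_g X \, d\mu \;=\; 0 $$
for every $\Phi$-invariant probability measure on $M$.

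Next I would apply this identity to the unstable SRB measures carried by the basic sets in the spectral decomposition of $\Phi$. Each basic set $\Omega_i$ carries a unique ergodic $\Phi$-invariant measure $\mu_i^+$ with absolutely continuous conditionals along local unstable leaves, and Pesin's entropy formula gives $h_{\mu_i^+}(\Phi) = \int r_u \, d\mu_i^+$. Combined with the previous identity, this equals $-\int r_s \, d\mu_i^+$. But Ruelle's inequality applied to the time-reversed flow $\Phi^{-1}$, whose positive Lyapunov exponent on $N_X$ is $-r_s$, gives $h_{\mu_i^+}(\Phi^{-1}) \leq -\int r_s \, d\mu_i^+$; since entropy is invariant under time reversal, equality holds. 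By the Ledrappier--Young characterization of SRB measures (equality in Ruelle's inequality for $\Phi^{-1}$ is equivalent to absolutely continuous conditionals along the unstable foliation of $\Phi^{-1}$, i.e., along the stable foliation of $\Phi$), $\mu_i^+$ has absolutely continuous conditionals on local stable leaves as well. A standard Hopf-style argument using absolute continuity of stable/unstable holonomies then forces $\mu_i^+$ to be absolutely continuous with respect to Lebesgue on $M$. Since a nontrivial absolutely continuous measure cannot be supported on a closed set with empty interior, and basic sets of Anosov flows are either equal to $M$ or nowhere dense, each $\Omega_i$ equals $M$; hence $\Phi$ has a single basic set and is transitive.

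With transitivity in hand, I would finish via Liv\v{s}ic. Integrating the bound $|\mathrm{div}_g X + X \cdot f_\epsilon| \leq \epsilon$ along any periodic orbit $\gamma$ of period $T$, the coboundary $f_\epsilon \circ \phi^T - f_\epsilon$ vanishes at the basepoint, so $|\int_\gamma \mathrm{div}_g X \, dt| \leq \epsilon T$; letting $\epsilon \to 0$ yields $\int_\gamma \mathrm{div}_g X \, dt = 0$ for every periodic orbit. Transitivity together with Liv\v{s}ic's cohomological theorem then produce a continuous solution $h : M \to \R$ to $X \cdot h = -\mathrm{div}_g X$, which is upgraded to a smooth function by smooth Liv\v{s}ic regularity (de la Llave--Marco--Moriy\'on, combined with Journ\'e's lemma along the weak foliations). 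The computation $\mathcal{L}_X(e^h \, \mathrm{dvol}_g) = e^h (X \cdot h + \mathrm{div}_g X)\, \mathrm{dvol}_g = 0$ shows that $e^h \, \mathrm{dvol}_g$ is a smooth $\Phi$-invariant volume form. The main technical hurdle I anticipate is the careful invocation of the Pesin--Ledrappier--Young machinery in the continuous-time, non-transitive setting, together with the passage from absolute continuity of conditionals along $W^s$ and $W^u$ to absolute continuity on $M$, which relies on the classical but non-trivial absolute continuity of the stable and unstable holonomies of a smooth Anosov flow.
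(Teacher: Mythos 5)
Your overall strategy is the same as the paper's: integrate the almost-volume-preserving condition against invariant measures to get $\Lambda_u+\Lambda_s=0$ in the mean, use the Pesin entropy formula to show an SRB measure is also SRB for the time-reversed flow, deduce absolute continuity, conclude transitivity, and finish with Liv\v{s}ic plus smooth regularity. The entropy computation and the Liv\v{s}ic step are correct. However, two of your justifications are wrong as stated and need repair.

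First, it is not true that \emph{each} basic set carries a measure with absolutely continuous conditionals on unstable leaves: for a non-attracting basic set, the set meets each unstable leaf in a Lebesgue-null (Cantor-like) set, so no invariant measure supported on it can have absolutely continuous unstable conditionals; equivalently, the pressure of the geometric potential is strictly negative there and the Pesin formula fails for the equilibrium state. The fix is to run your argument on an \emph{attractor} of the spectral decomposition (one always exists, by minimality in the order induced by $W^u(\Omega_i)\cap W^s(\Omega_j)\neq\emptyset$), or, as the paper does, to simply invoke the existence of \emph{some} SRB measure for the time-one map and not localize to basic sets at all. Second, your final step --- ``a nontrivial absolutely continuous measure cannot be supported on a closed set with empty interior'' --- is false: fat Cantor sets are closed, nowhere dense, and of positive Lebesgue measure, so absolute continuity of $\mu_i^+$ only gives $\mathrm{Leb}(\Omega_i)>0$, not that $\Omega_i$ has interior. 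What you actually need is the Bowen--Ruelle theorem (\cite[Corollary 5.7(b)]{BR75}, which the paper cites) that a basic set of positive Lebesgue measure for a $\mathcal{C}^2$ Axiom A flow is all of $M$; this is a nontrivial consequence of absolute continuity of the stable and unstable foliations, not a point-set topology fact. With these two substitutions your proof closes up and coincides with the paper's.
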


\begin{proof} As noted in the proof of Theorem~\ref{thm:supporting}, it is enough to show that $\Phi$ is topologically transitive, i.e., $\Phi$ has a dense orbit. We closely follow the strategy from~\cite{M22} that relies on some key properties of Sina\"{i}--Ruelle--Bowen (SRB) measures for Anosov diffeomorphisms. Here, the time-one map $\phi=\phi^1$ of $\Phi$ is not Anosov since it is neither contracting nor expanding in the direction of the flow. Nevertheless, the main arguments of~\cite{M22} can be adapted to our context with only minor modifications.

A Borel probability measure $\mu$ on $M$ preserved by $\phi$ is a \emph{SRB measure} if it has absolutely continuous conditional measures on unstable manifolds, see~\cite[Definition 1.4.2]{LY85}. Here, the (strong) unstable manifolds of $\Phi$ and $\phi$ coincide. By~\cite[Theorem C]{CY05}, $\phi$ admits a SRB measure. Let $g$ be a Riemannian metric on $M$ adapted to $\Phi$ (so that $r_s < 0 < r_u$) and $\mu$ be any Borel probability measure preserved by $\phi$. For $\mu$-a.e. $x \in M$ and for unit vectors $v_u \in E^u(x)$ and $v_s \in E^u(x)$, the \emph{Lyapunov exponents}
\begin{align*}
\Lambda_u(x) &\coloneqq \lim_{n \rightarrow \pm \infty} \frac{1}{n} \ln \Vert d\phi^n (v_u) \Vert = \lim_{n \rightarrow \pm \infty} \frac{1}{n} \int_0^n r_u \circ \phi^s(x) \, ds, \\
\Lambda_s(x) &\coloneqq \lim_{n \rightarrow \pm \infty} \frac{1}{n} \ln \Vert d\phi^n (v_s) \Vert = \lim_{n \rightarrow \pm \infty} \frac{1}{n}\int_0^n r_s \circ \phi^s(x) \, ds,
\end{align*}
exist, are finite, independent of the metric, and satisfy $\Lambda_s(x) < 0 < \Lambda_u(x)$. The third Lyapunov exponent corresponding to the direction of the flow is $0$.

The hypothesis on $\Phi$ implies that $\Lambda_u + \Lambda_s = 0$ $\mu$-a.e. Indeed, let $\epsilon > 0$ and choose a smooth (hence bounded) function $f_\epsilon : M \rightarrow \R$ such that $| r_u + r_s + X \cdot f_\epsilon| < \epsilon$. Then $\mu$-a.e., we have
\begin{align*}
\vert \Lambda_u + \Lambda_s\vert &\leq \limsup_{n \rightarrow \infty} \frac{1}{n}\int_0^n \vert r_u + r_s + X \cdot f_\epsilon \vert \circ \phi^s \, ds  + \limsup_{n \rightarrow \infty} \frac{1}{n} \left \vert \int_0^n (X \cdot f_\epsilon) \circ \phi^s \, ds \right \vert \\
&\leq \epsilon + \limsup_{n \rightarrow \infty} \frac{1}{n} \left \vert f_\epsilon \circ \phi^n - f_\epsilon \right \vert\\
&= \epsilon.
\end{align*}

By~\cite[Theorem A]{LY85}, $\mu$ is a SRB measure for $\phi$ if and only if it satisfies the \emph{Pesin entropy formula}
\begin{align*}
h_\mu(\phi) = \int_M \Lambda_u \, d\mu.
\end{align*}
Assuming that $\mu$ is a SRB measure for $\phi$, it follows that
\begin{align*}
h_\mu\big(\phi^{-1}\big) = \int_M - \Lambda_s \, d\mu,
\end{align*}
so since $- \Lambda_s$ corresponds to the unstable Lyapunov exponent of $\phi^{-1}$, $\mu$ is also a SRB measure for $\phi^{-1}$. By~\cite[Th\'{e}or\`{e}me (5.5)]{L84}, $\mu$ is absolutely continuous with respect to the Lebesgue measure on $M$ (induced by some Riemannian metric). If $\Omega$ denotes the non-wandering set of $\Phi$, then $\mu(\Omega) = 1$ so the Lebesgue measure of $\Omega$ is positive. Smale's spectral decomposition for $\Omega$ states that $\Omega$ can be partitioned into finitely many \emph{basic sets} $\Omega_1, \dots, \Omega_N$. Those are compact invariant subsets on which $\Phi$ is transitive. It follows that there exists a basic set $\Omega_i$ with positive Lebesgue measure. By~\cite[Corollary 5.7 (b)]{BR75}, $\Omega_i = M$ and $\Phi$ is transitive. This concludes the proof.
\end{proof}

\begin{rem} The proof can be adapted to show that any almost volume preserving Anosov flow of class $\mathcal{C}^2$  on any closed manifold is volume preserving.  In higher dimension, $\Lambda_u$  should be replaced by the sum of the positive Lyapunov exponents (counted with multiplicity) and $\Lambda_s$ by the sum of the negative ones.
\end{rem}

\begin{rem} The main result of~\cite{M22} asserts that an Anosov \emph{diffeomorphism} of class $\mathcal{C}^2$ on a closed manifold $M$ satisfying that at every periodic point, the Poincar\'{e} return map has determinant one is volume preserving. The same result remains true for an Anosov flow whose Poincar\'{e} return map at every closed orbit has determinant one. As in the proof in~\cite{M22}, this condition and Anosov's \emph{shadowing property} imply that $\Lambda_u + \Lambda_s=0$ $\mu$-a.e., which is enough to conclude.
\end{rem}

\printbibliography[heading=bibintoc, title={References}]

\end{document}